\newcommand{\comment}[1]{}
\newcommand{\be}{\begin{equation}}
\newcommand{\bel}[1]{\begin{equation}\label{#1}}
\newcommand{\qe}{\end{equation}}
\newcommand{\ee}{\end{equation}}
\newcommand{\eeq}{\end{equation}}
\newcommand{\ba}{\begin{eqnarray}}
\newcommand{\ea}{\end{eqnarray}}
\numberwithin{equation}{section}
\theoremstyle{plain}
\newtheorem{theorem}{Theorem}[section]
\theoremstyle{definition}
\newtheorem{definition}{Definition}[section]   
\theoremstyle{plain}
\theoremstyle{plain}
\newtheorem{proposition}{Proposition}[section]         
\theoremstyle{plain}
\theoremstyle{definition}
\newtheorem{remark}{Remark}[section]
\theoremstyle{definition}
\newtheorem{example}{Example}[section]
\newcommand{\R}{{\mathbb R}}
\newcommand{\N}{{\mathbb N}}
\newcommand{\Q}{{\mathbb Q}}
\newcommand{\Mm}{{\mathcal M}}    
\newcommand{\Pp}{{\mathcal P}}
\newcommand{\Ss}{{\mathcal S}}
\newcommand{\Om}{{\Omega}}
\newcommand{\eps}{{\varepsilon}}
\newcommand{\p}{{\partial}}
\newcommand{\lra}{\longrightarrow}
\newcommand{\sign}{\mbox{sign}}
\newcommand{\g}{{\mathfrak g}} 
\newcommand{\T}{{\mathbf T}}
\definecolor{brown}{RGB}{150,100,0}
\renewcommand{\sign}{{\rm sign}\,}
\begin{document}
%

\title{Parametrized measure models}

%
%
\author[1]{Nihat Ay}
\author[1]{J\"urgen Jost}
\author[2]{H\^ong V\^an L\^e}
\author[3]{Lorenz Schwachh\"ofer}
\affil[1]{Max-Planck-Institute for Mathematics in the Sciences, Leipzig, Germany, \newline \Letter{\footnotesize nay@mis.mpg.de, jjost@mis.mpg.de}}
\affil[2]{Academy of Sciences of the Czech Republic, Prague, \Letter {\footnotesize hvle@math.cas.cz}}
\affil[3]{TU Dortmund University, Dortmund, Germany, \Letter {\footnotesize lschwach@math.tu-dortmund.de}}
\maketitle 

%
\begin{abstract}
We develop a new and general notion of parametric measure models and
statistical models on an arbitrary sample space $\Omega$ which does not
assume that all measures of the model have the same null sets. This is
given by a differentiable map from the parameter manifold $M$ into the
set of finite measures or probability measures on $\Omega$, respectively,
which is differentiable when regarded as a map into the Banach space of
all signed measures on $\Omega$. Furthermore, we also give a rigorous
definition of roots of measures and give a natural characterization of
the Fisher metric and the Amari--Chentsov tensor as the pullback of
tensors defined on the space of roots of measures. We show that many
features such as the preservation of this tensor under sufficient
statistics and the monotonicity formula hold even in this very general set-up.
\end{abstract}

%
\noindent {\em MSC2010: 53C99, 62B05}

\noindent {\em Keywords: Fisher quadratic form, Amari-Chentsov tensor, sufficient statistic, monotonicity}

%

\section{Introduction}\label{sec1}

Information geometry is concerned with the use of differential
geometric methods in probability theory. An important object of
investigation are families of probability measures or, more generally,
of finite measures on a given sample space $\Om$ which depend
differentiably on a finite number of parameters. Associated to such a
family there are two symmetric tensors on the parameter space $M$. The
first is a quadratic form (i.e., a Riemannian metric), called the \emph{Fisher metric} $\g^F$, and the second is a $3$-tensor, called the
\emph{Amari--Chentsov tensor} $\T^{\mathrm{AC}}$. The Fisher metric was first
suggested by Rao \cite{Rao1945}, followed by Jeffreys \cite
{Jeffreys1946}, Efron \cite{Efron1975} and then systematically
developed by Chentsov and Morozova \cite{Chentsov1965,Chentsov1978} and \cite{MC1991}; the Amari--Chentsov tensor and its
significance was discovered by Amari \cite{Amari1980,Amari1982} and Chentsov \cite{Chentsov1982}.

These tensors are of interest from the differential geometric point of
view as they do not depend on the particular choice of parametrization
of the family, but they are also natural objects from the point of view
of statistics, as they are unchanged under sufficient statistics and
are in fact characterized by this property; this was shown in the case
of finite sample spaces by Chentsov in \cite{Chentsov1978} and more
recently for general sample spaces in \cite{AJLS}. In fact, Chentsov
not only showed the invariance of these tensors under sufficient
statistics, but also under what he called \emph{congruent embeddings} of
probability measures. These are Markov kernels between finite sample
spaces which are right inverses of a statistic. We use this property to
give a definition of congruent embeddings between arbitrary sample
spaces (cf. Definition~\ref{def:cong-emb}). As it turns out, every
Markov kernel induces a congruent embedding in this sense, but there
are congruent embeddings which are \emph{not} induced by Markov
kernels, cf. Theorem~\ref{Markov-congruent}.

The main conceptual difficulty in the investigation of families of
probability measures is the lack of a canonical manifold structure on
the spaces $\Mm(\Om)$ and $\Pp(\Om)$ of finite measures and
probability measures on $\Om$. If $\Om$ is finite, then a measure is
given by finitely many nonnegative parameters, allowing to identify
$\Mm(\Om)$ with the closure of the positive orthant in $\R^{|\Om|}$
and $\Pp(\Om)$ with the intersection of this closure with an affine
hyperplane in $\R^{|\Om|}$, so that both are (finite dimensional)
manifolds with corners. If one does not assume that all elements of
$\Om$ have positive mass for all measures in the family, that is,
allowing the model to contain elements of the boundary of $\Mm(\Om)$
or $\Pp(\Om)$, then technical difficulties arise for example, when
describing the Fisher metric and the Amari--Chentsov tensor. If $\Om$
is infinite, then there is a priori not even a differentiable structure
on $\Mm(\Om)$ and $\Pp(\Om)$.

Attempts have been made to provide $\Pp(\Om)$ and $\Mm(\Om)$ with a
Banach manifold structure. For instance, Pistone and Sempi \cite
{PS1995} equipped these spaces with a topology, the so-called
$e$-\emph{topology}. With this, $\Pp(\Om)$ and $\Mm(\Om)$ become Banach
manifolds and have many remarkable features, see, for example, \cite
{CP2007,P2013}. On the other hand, the $e$-topology is very
strong in the sense that many families of measures on $\Om$ fail to be
continuous w.r.t. the $e$-topology, so it cannot be applied as widely
as one would wish.

Another approach was recently pursued by Bauer, Bruveris and Michor
\cite{BBM} under the assumption that $\Om$ is a manifold. In this
case, the space of smooth densities also carries a natural topology,
and they were able to show that the invariance under diffeomorphisms
already suffices to characterize the Fisher metric of a family of such
densities.

In \cite{AJLS}, the authors of the present article proposed to define
parametrized measure models as a family $(\mathbf{p}(\xi))_{\xi\in M}$
of finite measures on $\Om$, labelled by elements $\xi$ of a finite
dimensional manifold $M$, such that for a measurable set $A \subset\Om$
%
\begin{equation}
\label{p-infinite2} \mathbf{p}(\xi) (A) =
\int_A d\mathbf{p}(\xi) =
\int_A p({\omega}; \xi) \, d\mu({\omega})
\end{equation}
for some reference measure $\mu$ and a positive function $p$ on $\Om
\times M$ which is differentiable in $\xi\in M$. This closely follows
the notion of Amari \cite{Amari1980}. That is, for fixed $\xi\in M$,
the function $p(\cdot;\xi)$ on $\Om$ is the Radon--Nikodym derivative
of the measure $\mathbf{p}(\xi)$ w.r.t. $\mu$, whence by a slight abuse
of notation (e.g., \cite{Bauer}, Definition~17.2) we abbreviate (\ref
{p-infinite2}) as
%
\begin{equation}
\label{p-infinite} \mathbf{p}(\xi) = p(\cdot; \xi) \mu.
\end{equation}

While this notion embraces many interesting families of measures, it is
still restricted as it requires the existence of a reference measure
$\mu$ dominating all measures $\mathbf{p}(\xi)$, and on the other hand,
the positivity of the density function implies that all measures
$\mathbf{p}(\xi)$ on $\Om$ are equivalent, that is, have the same null sets.
While the existence of a measure $\mu$ dominating all measures
$\mathbf{p}(\xi)$ is satisfied for example, if $M$ is a finite dimensional
manifold, the condition that all measures $\mathbf{p}(\xi)$ have the same
null sets is a more severe restriction of the admissible families.

It is the aim of the present article to provide a yet more general
definition of parametrized measure models which embraces all of the
aforementioned definitions, but is more general and more natural than
these at the same time. Namely, in this article we define \emph{parametrized measure models} and \emph{statistical models},
respectively, as families $(\mathbf{p}(\xi))_{\xi\in M}$ which are given
by a map $\mathbf{p}$ from $M$ to $\Mm(\Om)$ and $\Pp(\Om)$,
respectively, which is differentiable when regarded as a map between
the (finite or infinite dimensional) manifold $M$ and the Banach space
$\Ss(\Om)$ of finite signed measures on $\Om$, since evidently $\Pp
(\Om)$ and $\Mm(\Om)$ are subsets of $\Ss(\Om)$. That is, the
geometric structure on $\Mm(\Om)$ and $\Pp(\Om)$ is given by the
inclusions $\Pp(\Om) \hookrightarrow\Mm(\Om) \hookrightarrow\Ss
(\Om)$.

For the models defined in \cite{AJLS}, the function $p: \Om\times M
\to\R$ in (\ref{p-infinite}) is differentiable into the $\xi
$-direction, and such a $p$ is called a \emph{regular density function}.
Even if a parametrized measure model in the sense of the present paper
has a dominating measure $\mu$ and hence is given by (\ref
{p-infinite}), the density function $p$ is not necessarily regular, cf.
Remark~\ref{rem:paramMeas} and Example~\ref{ex4.2}(2) below, and
$p$ is not required to be positive $\mu$-a.e., making this notion more
general than that in \cite{AJLS}. We shall show that most of the
statements shown in \cite{AJLS} for parametrized measure models or
statistical models with a positive regular density function also hold
in this more general setup.

The Fisher metric $\g^F$ and the Amari--Chentsov tensor $\T^{\mathrm{AC}}$
associated to a parametrized measure model are the two symmetric
tensors given by
\begin{eqnarray*}
\g^F(V, W) & := &
\int_\Om\p_V \log p({\omega};\xi) \,
\p_W \log p({\omega};\xi) \, d\mathbf{p}(\xi),
\\
\mathbf{T}^{\mathrm{AC}}(V, W, U) & := &
\int_\Om\p_V \log p({\omega};\xi)
\, \p_W \log p({\omega};\xi)\, \p_U \log p({\omega};\xi) \,
d\mathbf{p}(\xi).
\end{eqnarray*}

The crucial observation is that even though the function $\log
p({\omega};\xi)$ is not defined everywhere if we drop the assumption
that the density function $p$ is positive, the partial derivatives $\p
_V \log p({\omega};\xi)$ still may be given sense for an arbitrary
parametrized measure model. Thus, the notion of $k$-integrability from
\cite{AJLS} requiring that $\p_V \log p({\omega};\xi) \in L^k(\Om,
p(\xi))$ for all $V \in T_\xi M$ generalizes to parametrized measure models.

We also introduce the Banach space $\Ss^r(\Om)$ of $r$th powers of
measures on $\Om$ for $r \in(0,1]$, which has been discussed in \cite{Neveu1965}, Ex. IV.1.4, for general $\Om$ and generalizes the concept
of half densities on a manifold $\Om$ in \cite{MR}, Section 6.9.1. The
elements of $\Ss^r(\Om)$ can be raised to the $1/r$th power to
become finite signed measures, and for each measure $\mu\in\Mm(\Om)
\subset\Ss(\Om)$ there is a well defined power $\mu^r \in\Ss
^r(\Om)$. Thus, for a parametrized measure model $\mathbf{p}: M \to\Mm
(\Om)$ the $r$th power defines a map $\mathbf{p}^r: M \to\Ss^r(\Om)$,
and if the model is $k$-integrable for $k = 1/r \geq1$, then
$\mathbf{p}^r$ is differentiable, and for $k = 2$ or $k=3$, $\g^F$ and
$\T^{\mathrm{AC}}$ are pull-backs of canonical tensors on $\Ss^{1/2}(\Om)$
under $\mathbf{p}^{1/2}$ and $\Ss^{1/3}(\Om)$ under $\mathbf{p}^{1/3}$,
respectively.

We also discuss the behavior of the Fisher metric under statistics,
i.e., under measurable maps $\kappa: \Om\to\Om'$ or, more general,
under Markov kernels $K: \Om\to\Pp(\Om')$. These transitions can be
interpreted as data processing in statistical decision theory, which
can be deterministic (given by a measurable map, i.e., a statistic) or
randomized (i.e., given by a Markov kernel). The earliest occurrence of
this point of view appears to be \cite{Chentsov1982}.

Given a parametrized measure model $\mathbf{p}: M \to\Mm(\Om)$, it
induces a map $\mathbf{p}'(\xi) := \kappa_* \mathbf{p}(\xi)$ or
$\mathbf{p}'(\xi) := K_* \mathbf{p}(\xi)$, respectively. We show that this
process preserves $k$-integrability, i.e., if $\mathbf{p}$ is
$k$-integrable, then so is $\mathbf{p}'$ (cf. Theorem~\ref
{thm:induced-kintegrable}). Moreover, in Theorem~\ref
{thm:monotonicity} we show in this general setup the estimate
%
\begin{equation}
\label{eq:mono-I} \bigl\|\p_V \log p(\cdot;\xi)\bigr\|
_k \geq\bigl\|\p_V \log p'(\cdot;
\xi)\bigr\|_k,\qquad \mbox{whence } \g^F(V, V) \geq{
\g'}^F(V, V),
\end{equation}
where the second estimate is called the \emph{monotonicity formula} and
follows form the first for \mbox{$k = 2$}. The difference $\|\p_V \log
p(\cdot;\xi)\|_k^k - \|\p_V \log p'(\cdot;\xi)\|_k^k \geq0$ is
called the $k$\emph{th order information loss under} $\kappa$ (\emph{or} $K$)
\emph{in direction} $V$. If the information loss in any direction vanishes, then we call the statistic {\em sufficient} for the model.

There is a remarkable difference between parametrized measure models
with \emph{positive} regular density functions, that is, those
considered in \cite{AJLS}, and the more general notion establishes in
this paper. Namely, in case of a positive regular density function the
vanishing of the information loss for a statistic $\kappa: \Om\to\Om
'$ implies that the statistic admits a Fisher-Neyman factorization, cf. Proposition~\ref{prop:infoloss}. Remarkably, this is no longer true in our
setting. That is, if we admit parametrized measure models with
inequivalent measures, then there are statistics which have vanishing
information loss, but do not admit a Fisher-Neyman factorization, cf. Example~\ref{ex:suff}.

This paper is structured as follows. In Section~\ref{sec:measures}, we
give the formal definition of the spaces of $r$th powers of measures.
In Section~\ref{sec:congruent}, we provide a precise definition of
congruent embeddings for arbitrary sample spaces $\Om$ and discuss
their relations with Markov kernels and the existence of transverse
measures. In the following Section~\ref{sec:k-int}, we establish the
notion of $k$-integrability, which is applied in the final
Section~\ref{sec:suffstat} to the discussion of sufficient statistics and the proof
of the monotonicity formula.

\section{The spaces of measures and their powers} \label{sec:measures}

\subsection{The space of (signed) finite measures}\label{sec2.1}

Let $(\Om, \Sigma)$ be a measurable space, that is an arbitrary set
$\Om$ together with a sigma algebra $\Sigma$ of subsets of $\Om$.
Regarding the sigma algebra $\Sigma$ on $\Om$ as fixed, we let
\begin{eqnarray*}
\Pp(\Om) & := & \{ \mu : \mu \mbox{ a probability measure on } \Om\},
\\
\Mm(\Om) & := & \{ \mu : \mu \mbox{ a finite measure on }\Om \},
\\
\Ss(\Om) & := & \{ \mu : \mu \mbox{ a signed finite measure on } \Om\},
\\
\Ss_0(\Om) & := & \biggl\{ \mu\in\Ss(\Om) :
\int_\Om d\mu= 0\biggr\}.
\end{eqnarray*}

Clearly, $\Pp(\Om) \subset\Mm(\Om) \subset\Ss(\Om)$, and $\Ss
_0(\Om), \Ss(\Om)$ are real vector spaces. In fact, both $\Ss_0(\Om
)$ and $\Ss(\Om)$ are Banach spaces whose norm is given by the total
variation of a signed measure, defined as
\[
{\|\mu\|}_{\mathrm{TV}} := \sup\sum_{i = 1}^n
\bigl\|\mu(A_i)\bigr\|,
\]
where the supremum is taken over all finite partitions $\Omega= A_1\, 
\dot\cup\, \cdots\, \dot\cup\,  A_n$ with disjoint sets \mbox{$A_i \in\Sigma$}.
Here, the symbol $\dot\cup$ stands for the disjoint union of sets.

For a measurable function $\phi: \Om\to[-\infty, \infty]$, we
define $\phi_+ := \max(\phi, 0)$ and $\phi_- := \max(-\phi, 0)$,
so that $\phi_\pm\geq0$ are measurable with disjoint support, and
%
\begin{equation}
\label{eq:phi+-} \phi= \phi_+ - \phi_-,\qquad |\phi| = \phi_+ + \phi_-.
\end{equation}
By the \emph{Jordan decomposition theorem}, each measure $\mu\in\Ss
(\Om)$ can be decomposed uniquely as
%
\begin{equation}
\label{Jordan-dec} \mu= \mu_+ - \mu_- \qquad \mbox{with }\mu_\pm\in\Mm(
\Om), \mu_+ \perp\mu_-.
\end{equation}
That is, there is a decomposition $\Om= P \dot\cup N$ with $\mu_+(N)
= \mu_-(P) = 0$. Thus, if we define
\[
|\mu| := \mu_+ + \mu_- \in\Mm(\Om),
\]
then (\ref{Jordan-dec}) implies
%
\begin{equation}
\label{Jordan-dec2} \bigl\|\mu(A)\bigr\|\leq|\mu|(A) \qquad \mbox{for all
}\mu\in \Ss(\Om)\mbox{ and }A \in\Sigma,
\end{equation}
so that
\[
\|\mu\|_{\mathrm{TV}} = \bigl\|\|\mu\|\bigr\|
_{\mathrm{TV}} = |\mu|(\Om).
\]
In particular,
\[
\Pp(\Om) = \bigl\{ \mu\in\Mm(\Om) : \|\mu\|_{\mathrm{TV}}= 1\bigr
\}.
\]
Moreover, fixing a measure $\mu_0 \in\Mm(\Om)$, we let
%
\begin{eqnarray}
\label{def-S(Om,mu)} \begin{aligned} \Pp(\Om, \mu_0) &:= \bigl\{ \mu \in
\Pp(\Om) : \mu \mbox{ is dominated by } \mu_0 \bigr\},
\\
\Mm(\Om, \mu_0) &:= \bigl\{ \mu \in\Mm(\Om) : \mu \mbox{ is
dominated by } \mu_0 \bigr\},
\\
\Pp_+(\Om, \mu_0) &:= \bigl\{ \mu \in\Pp(\Om, \mu_0) :
\mu \mbox{ is equivalent to } \mu_0 \bigr\},
\\
\Mm_+(\Om, \mu_0) &:= \bigl\{ \mu \in\Mm(\Om, \mu_0) :
\mu \mbox{ is equivalent to } \mu_0 \bigr\},
\\
\Ss(\Om, \mu_0) &:= \bigl\{ \mu \in\Ss(\Om) : \mu \mbox{ is
dominated by } \mu_0 \bigr\},
\\
\Ss_0(\Om, \mu_0) &:= \Ss(\Om, \mu_0)
\cap\Ss_0(\Om), \end{aligned}
\end{eqnarray}
where we say that $\mu_0$ dominates $\mu$ if every $\mu_0$-null set
is also a $|\mu|$-null set and where we call two measures equivalent
if they dominate each other and hence have the same null sets. The
spaces in (\ref{def-S(Om,mu)}) do not change when replacing $\mu_0$
by an equivalent measure.

We may canonically identify $\Ss(\Om, \mu_0)$ with $L^1(\Om, \mu
_0)$ by the correspondence
\[
\imath_{\mathrm{can}}: L^1(\Om, \mu_0) \longrightarrow
\Ss(\Om, \mu_0),\qquad \phi\longmapsto\phi \mu_0.
\]
By the Radon--Nikodym theorem, this is an isomorphism whose inverse is
given by the Radon--Nikodym derivative $\mu\mapsto\frac{d\mu}{d\mu
_0}$. With this, $\Mm(\Om, \mu_0) = \{ \phi\mu_0  :  \phi\geq
0\}$ and $\Mm_+(\Om, \mu_0) = \{ \phi\mu_0  :  \phi> 0\}$ and
the corresponding descriptions apply to $\Pp(\Om, \mu_0)$ and $\Pp
_+(\Om, \mu_0)$, respectively.
Observe that $\imath_{\mathrm{can}}$ is an isomorphism of Banach spaces, since evidently
\[
\|\phi\|_{L^1(\Om, \mu_0)} =
\int_\Om|\phi| \, d\mu_0 = \|\phi
\mu_0\|_{\mathrm{TV}}.
\]

\subsection{Differential maps between Banach manifolds and tangent
double cone fibrations} \label{sec:Banach}

In this section, we shall recall some basic notions of maps between
Banach manifolds. For simplicity, we shall restrict ourselves to maps
between open subsets of Banach spaces, even though this notion can be
generalized to general Banach manifolds, see, for example, \cite{Lang2002}.

Let $V$ and $W$ be Banach spaces and $U \subset V$ an open subset. A
map $\phi: U \to W$ is called \emph{differentiable at} $x \in U$, if
there is a bounded linear operator $d_x\phi\in \operatorname{Lin}(V,W)$ such that
%
\begin{equation}
\label{eq:def-C1} \lim_{h \to0} \frac{\|\phi(x+h) - \phi(x) - d_x\phi(h)\|_W}{\|h\|
_V} = 0.
\end{equation}

In this case, $d_x\phi$ is called the (\emph{total}) \emph{differential of}
$\phi$ \emph{at} $x$. Moreover, $\phi$ is called \emph{continuously
differentiable} or shortly a $C^1$-\emph{map}, if it is differentiable
at every $x \in U$, and the map $d\phi: U \to \operatorname{Lin}(V, W)$, $x \mapsto
d_x\phi$ is continuous. Furthermore, a differentiable map $c: (- \eps
, \eps) \to W$ is called a \emph{curve in} $W$.

\begin{definition}\label{def:tangentvector}
Let $X \subset V$ be an arbitrary subset and let $x_0 \in X$. Then $v
\in V$ is called a \emph{tangent vector of} $X$ \emph{at} $x_0$, if there is a
curve $c: (- \eps, \eps) \to X \subset V$ such that $c(0) = x_0$ and
$\dot c(0) := d_0c(1) = v$.

The set of all tangent vectors at $x_0$ is called the \emph{tangent
double cone of} $X$ \emph{at} $x_0$ and is denoted by $T_{x_0}X$.
\end{definition}

Since reparametrization of the curve $c$ easily implies that $T_{x_0}X$
is invariant under multiplication by positive or negative scalars, it
is a double cone in $V$. However, for general subsets $X \subset V$,
$T_{x_0}X$ may fail to be a vector subspace, and for $x_0 \neq x_1$,
the tangent cones $T_{x_0}X$ and $T_{x_1}X$ need not be homeomorphic.
We also let
\[
TX := \mathop{\dot\bigcup}_{x_0 \in X} T_{x_0}X \subset X
\times V \subset V \times V,
\]
equipped with the induced topology. Again, $\dot\bigcup$ stands for
the disjoint union of sets. Then $TX$ together with the map $TX \to X$
mapping $T_{x_0}X$ to $x_0$ is a topological fibration, called the
\emph{tangent double cone fibration of} $X$. Since this is a rather bulky
terminology, we shall simply refer to $TX \to X$ as the \emph{tangent
fibration}, but the reader should be aware that, unlike in some texts,
this is not the a synonym for the tangent bundle, as $X$ needs not be a
manifold in general.

If $U \subset V$ is open and $\phi: U \to W$ is a $C^1$-map whose
image is contained in $X \subset W$, then $d_{x_0}\phi(V) \subset
T_{\phi(x_0)}X$, whence $\phi$ induces a continuous map
\[
d\phi: TU = U \times V \longrightarrow TX, \qquad (u, v) \longmapsto
d_u\phi(v).
\]

\begin{theorem}\label{thm:TM-TP}
Let $\Ss(\Om)$ be the Banach space of signed finite measures on $\Om
$. Then the tangent cones of $\Mm(\Om)$ and $\Pp(\Om)$ at $\mu$
are $T_\mu\Mm(\Om) = \Ss(\Om, \mu)$ and $T_\mu\Pp(\Om) = \Ss
_0(\Om, \mu)$, respectively, so that the tangent fibrations are given as
\begin{equation*}
T\Mm(\Om) = \mathop{\dot\bigcup}_{\mu\in\Mm(\Om)} \Ss(\Om, \mu) \subset
\Mm(\Om) \times\Ss(\Om)
\end{equation*}
and
\begin{equation*}
T\Pp(\Om) = \mathop{\dot\bigcup}_{\mu\in\Pp(\Om)} \Ss_0(\Om,
\mu) \subset\Pp(\Om) \times\Ss(\Om).
\end{equation*}
\end{theorem}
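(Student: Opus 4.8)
The plan is to prove the two inclusions $T_\mu\Mm(\Om) \subseteq \Ss(\Om,\mu)$ and $\Ss(\Om,\mu) \subseteq T_\mu\Mm(\Om)$ separately, and then specialize to the probability case. For the first inclusion, I would take any tangent vector $v = \dot c(0)$ arising from a curve $c:(-\eps,\eps)\to\Mm(\Om)$ with $c(0)=\mu$. The key point is that $v \in \Ss(\Om)$ is a difference-quotient limit of measures $c(t)\in\Mm(\Om)$, and I must show $v$ is dominated by $\mu$, i.e.\ every $\mu$-null set $A$ is a $|v|$-null set. Since $c(t)$ is a nonnegative measure, for a $\mu$-null set $A$ we have $c(0)(A)=\mu(A)=0$, so $c(t)(A)\geq 0$ forces the difference quotient $\tfrac{1}{t}(c(t)(A)-c(0)(A))$ to be $\geq 0$ for $t>0$ and $\leq 0$ for $t<0$; differentiability in total variation norm gives $v(A)=\lim_{t\to 0}\tfrac1t(c(t)-c(0))(A)$, and the two-sided sign constraint forces $v(A)=0$. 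To upgrade from $v(A)=0$ to $|v|(A)=0$, I would apply the same argument to every measurable subset $A'\subseteq A$ (also $\mu$-null), concluding $v(A')=0$ for all such $A'$, which by the Jordan decomposition characterization means $|v|(A)=0$. Hence $v$ is dominated by $\mu$, so $v\in\Ss(\Om,\mu)$.

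For the reverse inclusion $\Ss(\Om,\mu)\subseteq T_\mu\Mm(\Om)$, given $v\in\Ss(\Om,\mu)$ I would construct an explicit curve. Using the canonical identification $\imath_{\mathrm{can}}$, write $\mu = \phi_0\,\mu$ trivially and $v = \psi\,\mu_0$ for a suitable dominating measure; more directly, since $v$ is dominated by $\mu$ itself, write $v = f\,\mu$ with $f = \tfrac{d v}{d\mu}\in L^1(\Om,\mu)$ by Radon--Nikodym. The natural candidate is the affine curve $c(t) := \mu + t\,v = (1+tf)\,\mu$. This is differentiable as a map into $\Ss(\Om)$ with $\dot c(0)=v$, because $\|c(t)-\mu - tv\|_{\mathrm{TV}}=0$ identically. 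The only thing to check is that $c(t)\in\Mm(\Om)$, i.e.\ that $c(t)$ is a \emph{nonnegative} measure, for $t$ in a small interval around $0$. Here the obstacle is that $f$ need not be bounded, so $(1+tf)$ may be negative on a set of positive measure for every $t\neq 0$. I would handle this by truncation: on the set where $f<0$, replace the linear curve by one that keeps the density nonnegative, for instance $c(t):=\max(1+tf,0)\,\mu$, or more cleanly use $c(t):=(1+tf)_+\,\mu$ and verify that the correction $\|c(t)-(\mu+tv)\|_{\mathrm{TV}}=\int (1+tf)_-\,d\mu = o(t)$ as $t\to 0$. This last estimate follows from dominated convergence, since $(1+tf)_-/t \to 0$ pointwise $\mu$-a.e.\ (as $1+tf\to 1>0$) and is dominated by $|f|\in L^1(\Om,\mu)$; thus the truncated curve still has $\dot c(0)=v$ while lying in $\Mm(\Om)$.

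For the probability case, the inclusion $T_\mu\Pp(\Om)\subseteq\Ss_0(\Om,\mu)$ follows by combining the domination argument above with the normalization constraint: any curve in $\Pp(\Om)$ satisfies $c(t)(\Om)=1$ for all $t$, so differentiating gives $\int_\Om dv = v(\Om)=0$, i.e.\ $v\in\Ss_0(\Om)$, and domination gives $v\in\Ss(\Om,\mu)$, hence $v\in\Ss_0(\Om,\mu)$. For the reverse inclusion, given $v\in\Ss_0(\Om,\mu)$ I would take the truncated curve from the finite-measure case and renormalize it to total mass one, setting $\hat c(t):=c(t)/\|c(t)\|_{\mathrm{TV}}$; since $\|c(t)\|_{\mathrm{TV}}=c(t)(\Om)\to 1$ with derivative $v(\Om)=0$ at $t=0$, the renormalization does not alter the derivative at $t=0$, so $\dot{\hat c}(0)=v$ and $\hat c(t)\in\Pp(\Om)$. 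The tangent fibration statements are then immediate from Definition~\ref{def:tangentvector} and the definition of $TX$.

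I expect the main obstacle to be the positivity of the constructed curve in the reverse inclusion: the naive affine curve $\mu+tv$ leaves $\Mm(\Om)$ whenever the Radon--Nikodym density is unbounded below, so the truncation argument together with the $o(t)$ total-variation estimate via dominated convergence is the technical heart of the proof.
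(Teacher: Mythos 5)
Your overall strategy coincides with the paper's: the forward inclusion via the one-sided sign constraint on the difference quotients of $c(t)(A)$ for $\mu$-null $A$, the reverse inclusion via an explicit curve with prescribed derivative at $0$, and the probability case by adding the mass constraint $v(\Om)=0$ and renormalizing. Your treatment of the forward inclusion is in fact slightly more careful than the paper's, since you explicitly upgrade $v(A)=0$ to $|v|(A)=0$ by running the argument over all measurable subsets of $A$; the paper leaves this step implicit.

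The one place you genuinely diverge is the curve realizing a given $v=f\mu\in\Ss(\Om,\mu)$. You truncate the affine curve, $c(t):=(1+tf)_+\,\mu$, and your estimate $\int_\Om(1+tf)_-\,d\mu=o(t)$ via dominated convergence is correct, so $c$ is differentiable at $t=0$ with $\dot c(0)=v$. The catch is that the paper's definition of a curve requires a map that is differentiable at \emph{every} parameter value in $(-\eps,\eps)$, and your truncated curve can fail this: for fixed ${\omega}$ the function $t\mapsto(1+tf({\omega}))_+$ has a kink at $t=-1/f({\omega})$, so $c$ fails to be differentiable at any $t_0\neq 0$ with $\mu(\{f=-1/t_0\})>0$, and if $f$ is unbounded such $t_0$ can accumulate at $0$ (take $f=-n$ on disjoint sets of measure $2^{-n}/n$), so no shrinking of $\eps$ rescues the construction. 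This is precisely why the paper interpolates exponentially on the bad set, taking the density to be $\exp(tf)$ where $tf<0$ and $1+tf$ where $tf\geq 0$: pointwise in ${\omega}$ this is $C^1$ in $t$ on all of $\R$ (the two branches meet only at $t=0$ with matching value and derivative), it stays positive, and its $t$-derivative is dominated by $|f|\in L^1(\Om,\mu)$, so the curve is differentiable everywhere by dominated convergence. Your argument is repaired by substituting that curve (or by adopting the weaker reading that only differentiability at $0$ is required of a curve, which is all the tangent-cone definition ever uses); everything else, including the renormalization in the probability case, then goes through as you wrote it.
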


\begin{proof} Let $\nu\in T_{\mu_0} \Mm(\Om)$ and let $(\mu_t)_{t
\in(-\eps, \eps)}$ be a curve in $\Mm(\Om)$ with $\dot\mu_0 =
\nu$. Let $A \subset\Om$ be such that $\mu_0(A) = 0$. Then as $\mu
_t(A) \geq0$, the function $t \mapsto\mu_t(A)$ has a minimum at
$t_0=0$, whence
\[
0 = \left.\frac{d}{dt}\right|_{t=0} \mu_t(A) = \dot
\mu_0(A) = \nu(A),
\]
where the second equation is evident from (\ref{eq:def-C1}). Thus,
$\nu(A) = 0$ whenever $\mu_0(A) = 0$, i.e., $\mu_0$ dominates $\nu
$, so that $\nu\in\Ss(\Om, \mu_0)$. Thus, $T_{\mu_0} \Mm(\Om)
\subset\Ss(\Om, \mu_0)$.

Conversely, given $\nu= \phi\mu_0 \in\Ss(\Om, \mu_0)$, define
$\mu_t := p({\omega}; t) \mu_0$ where
\[
p({\omega}; t) := %
\begin{cases} 1 + t \phi({\omega}) & \mbox{if }t
\phi({\omega}) \geq0,
\\
\exp\bigl(t\phi({\omega})\bigr) & \mbox{if }t\phi({\omega}) < 0. \end{cases}
\]
As $p({\omega}; t) \leq\max(1+t\phi({\omega}), 1)$, it follows
that $\mu_t \in\Mm(\Om)$, and as $|\p_t p({\omega};t)| \leq|\phi
({\omega})| \in L^1(\Om, \mu_0)$ for all $t$, it follows that $t
\mapsto\mu_t$ is a $C^1$-curve in $\Mm(\Om)$ with $\dot\mu_0 =
\phi\mu_0 = \nu$, whence $\nu\in T_{\mu_0}\Mm(\Om)$ as claimed.

To show the statement for $\Pp(\Om)$, let $(\mu_t)_{t \in(-\eps,
\eps)}$ be a curve in $\Pp(\Om)$ with $\dot\mu_0 = \nu$. Then as
$\mu_t$ is a probability measure for all $t$, we conclude
\[
\biggl\|
\int_\Om d\nu\biggr\|= \biggl\|
\int_\Om\frac{1}t \, d(\mu_t -
\mu_0 - t \nu)\biggr\|\leq\frac{\|\mu_t - \mu_0 - t \nu\|
_{\mathrm{TV}}}{|t|} \xrightarrow{t
\to0} 0,
\]
so that $\nu\in\Ss_0(\Om)$. Since $\Pp(\Om) \subset\Mm(\Om)$,
it follows that $T_{\mu_0}\Pp(\Om) \subset T_{\mu_0}\Mm(\Om) \cap
\Ss_0(\Om) = \Ss_0(\Om, \mu_0)$ for all $\mu_0 \in\Pp(\Om)$.

Conversely, given $\nu= \phi\mu_0 \in\Ss_0(\Om, \mu_0)$, define
the curve $\lambda_t := \mu_t\|\mu_t\|_{\mathrm{TV}}^{-1} \in\Pp(\Om)$
with $\mu_t$ from above, which is a $C^1$-curve in $\Pp(\Om)$ as $\|
\mu_t\|_{\mathrm{TV}} > 0$, and it is straightforward that $\lambda_0 = \mu
_0$ and $\dot\lambda_0 = \phi\mu_0 = \nu$.
\end{proof}

\begin{remark}\label{rem2.1}
\begin{enumerate}
\item[(1)]
Observe that the curves $\mu_t$ and $\lambda_t$ in the proof of
Theorem~\ref{thm:TM-TP} are contained in $\Mm_+(\Om, \mu_0)$ and
$\Pp_+(\Om, \mu_0)$, respectively, whence
\[
T_{\mu}\Mm_+(\Om, \mu_0) = \Ss(\Om, \mu) \quad \mbox{and}
\quad T_{\mu}\Pp_+(\Om, \mu) = \Ss_0(\Om, \mu).
\]
But if $\mu\in\Mm_+(\Om, \mu_0)$, the $\mu$ and $\mu_0$ are
equivalent measures so that $\Ss(\Om, \mu) = \Ss(\Om_,\mu_0) =:
V$ and $\Ss_0(\Om, \mu) = \Ss_0(\Om, \mu_0) =: V_0$. Thus, the
tangent space is the same at all points.

That is, $\Mm_+(\Om, \mu_0) \subset V$ has the property that $T_\mu
\Mm_+(\Om, \mu_0) = V$ for all $\mu$, but $\Mm_+(\Om, \mu_0)
\subset V$ is not an open subset if $\Omega$ is infinite, and the
corresponding statement holds for $\Pp(\Om, \mu_0) \subset\mu_0 +
V_0$. This is a rather unusual phenomenon.
\item[(2)]
The sets $\Pp(\Om)$ and $\Mm(\Om)$ are not Banach submanifolds of
$\Ss(\Om)$, and the tangent fibrations $T\Pp(\Om) \to\Pp(\Om)$
and $T\Mm(\Om) \to\Mm(\Om)$ are not vector bundles, even though
the fibers at each point are closed subspaces. This even fails in the
case where $\Om= \{{\omega}_1, \ldots, {\omega}_k\}$ is finite. In
this case, we may identify $\Ss(\Om)$ with $\R^k$ by the map $\sum_{i=1}^k x_i \delta^{{\omega}_i} \cong(x_1, \ldots, x_k)$, and with this,
\[
T\Mm(\Om) \cong\left\{(x_1, \ldots, x_k;
y_1, \ldots, y_k) \in\R ^k \times
\R^k \, :\, %
\begin{array} {l} x_i \geq0,
\\
x_i = 0 \Rightarrow y_i = 0 \end{array} %
 \right\} \subset\R^{2k},
\]
and this is evidently not a submanifold of $\R^{2k}$. Indeed, in this
case the dimension of $T_\mu\Mm(\Om) = \Ss(\Om, \mu)$ equals $|\{
{\omega}\in\Om\mid\mu({\omega}) > 0\}|$, which varies with $\mu$.
\end{enumerate}
\end{remark}

\subsection{Powers of measures} \label{sec:power-density}

Let us now give the formal definition of powers of measures. On the set
$\Mm(\Om)$ we define the preordering $\mu_1 \leq\mu_2$ if $\mu_2$
dominates $\mu_1$. Then $(\Mm(\Om), \leq)$ is a directed set,
meaning that for any pair $\mu_1, \mu_2 \in\Mm(\Om)$ there is a
$\mu_0 \in\Mm(\Om)$ dominating both of them (use e.g. $\mu_0 :=
\mu_1 + \mu_2$).

For fixed $r \in(0,1]$ and measures $\mu_1 \leq\mu_2$ on $\Om$ we
define the linear embedding
\[
\imath_{\mu_2}^{\mu_1}: L^{1/r}(\Om, \mu_1)
\longrightarrow L^{1/r}(\Om, \mu_2), \qquad \phi\longmapsto
\phi \biggl(\frac
{d\mu_1}{d\mu_2} \biggr)^r.
\]
Observe that
%
\begin{eqnarray}
\label{eq:norm-Sr} \begin{aligned} \bigl\|\imath_{\mu_2}^{\mu_1}(
\phi)\bigr\|_{L^{1/r}(\Om, \mu_2)} & = \biggl\|
\int _\Om\bigl\|\imath_{\mu_2}^{\mu_1}(
\phi)\bigr\|^{1/r} \,d\mu_2\biggr\|
^r
\\
&= \biggl\|
\int_\Om|\phi|^{1/r} \frac{d\mu_1}{d\mu_2} \,d
\mu_2 \biggr\|^r
\\
& = \biggl\|
\int_\Om|\phi|^{1/r} \,d\mu_1 \biggr
\|^r
\\
&= \|\phi\|_{L^{1/r}(\Om, \mu_1)}, \end{aligned}
\end{eqnarray}
so that $\imath_{\mu_2}^{\mu_1}$ is an isometry. Evidently $\imath
_{\mu_2}^{\mu_1} \imath_{\mu_3}^{\mu_2} = \imath_{\mu_3}^{\mu
_1}$ whenever $\mu_1 \leq\mu_2 \leq\mu_3$. Then we define the \emph{space of $r$th powers of measures on $\Om$} to be the directed limit
over the directed set $(\Mm(\Om), \leq)$
%
\begin{equation}
\Ss^r(\Om) := \lim_{\longrightarrow} L^{1/r}(\Om,
\mu).
\end{equation}

Let us give a more concrete definition of $\Ss^r(\Om)$. On the
disjoint union of the spaces $L^{1/r}(\Om, \mu)$ for $\mu\in\Mm
(\Om)$ we define the equivalence relation
\begin{eqnarray*}
\begin{aligned}
L^{1/r}(\Om, \mu_1) \ni\phi\sim\psi\in L^{1/r}(
\Om, \mu_2) &\quad \Longleftrightarrow\quad  \imath_{\mu_0}^{\mu_1}(
\phi) = \imath_{\mu_0}^{\mu_2}(\psi)
\\
& \quad \Longleftrightarrow\quad  \phi \biggl(\frac{d\mu_1}{d\mu
_0}
\biggr)^r = \psi \biggl(\frac{d\mu_2}{d\mu_0} \biggr)^r
\end{aligned}
\end{eqnarray*}
for some $\mu_0 \geq\mu_1, \mu_2$. Then $\Ss^r(\Om)$ is the set
of all equivalence classes of this relation.

Denote the equivalence class of $\phi\in L^{1/r}(\Om, \mu)$ by $\phi
\mu^r$, so that $\mu^r \in\Ss^r(\Om)$ is the equivalence class
represented by $1 \in L^{1/r}(\Om, \mu)$. Then the equivalence
relation yields
%
\begin{equation}
\label{eq:equivalence} \mu_1^r = \biggl( \frac{d\mu_1}{d\mu_2}
\biggr)^r \mu_2^r\qquad \mbox{as elements of
}\Ss^r(\Om)
\end{equation}
whenever $\mu_1 \leq\mu_2$, justifying this notation. In fact, from
this description in the case $r = 1$ we see that
\[
\Ss^1(\Om) = \Ss(\Om).
\]
Observe that by (\ref{eq:norm-Sr}) $\|\phi\|_{L^{1/r}(\Om, \mu)}$ is constant on
equivalence classes, whence there is a norm on $\Ss^r(\Om)$,
denoted by $\|\cdot\|_{1/r}$, for which the inclusions
\[
L^{1/r}(\Om, \mu) \lra\Ss^r(\Om), \qquad \phi\longmapsto
\phi\mu^r
\]
are isometries. For $r = 1$, we have $\|\cdot\|_1 = \|\cdot\|_{\mathrm{TV}}$. Thus,
%
\begin{equation}
\label{eq:def-normSr} \bigl\|\phi\mu^r\bigr\|_{1/r} =
\|\phi\|_{L^{1/r}(\Om, \mu)} = \biggl\|
\int_\Om|\phi |^{1/r} \, d\mu\biggr\|
^r \qquad \mbox{for }0 < r \leq1.
\end{equation}

Note that the equivalence relation also preserves nonnegativity of
functions, whence we may define the subsets
%
\begin{eqnarray}
\begin{aligned} \Mm^r(\Om) & := \bigl\{ \phi\mu^r : \mu
\in\Mm(\Om), \phi\geq 0\bigr\},
\\
\Pp^r(\Om) & := \bigl\{ \phi\mu^r : \mu\in\Pp(\Om), \phi
\geq 0, \|\phi \mu^r\|_{1/r} = 1\bigr\}. \end{aligned}
\end{eqnarray}

In analogy to (\ref{def-S(Om,mu)}) we define for a fixed measure $\mu
_0 \in\Mm(\Om)$ and $r \in(0,1]$ the spaces
\begin{eqnarray*}
\Ss^r(\Om, \mu_0) & := & \bigl\{ \phi
\mu_0^r : \phi\in L^{1/r}(\Om,
\mu_0)\bigr\},
\\
\Mm^r(\Om, \mu_0) & := & \bigl\{ \phi
\mu_0^r : \phi\in L^{1/r}(\Om,
\mu_0), \phi\geq0\bigr\},
\\
\Pp^r(\Om, \mu_0) & := & \bigl\{ \phi
\mu_0^r : \phi\in L^{1/r}(\Om,
\mu_0), \phi\geq0, \|\phi \mu_0^r\|_{1/r} = 1\bigr\},
\\
\Ss^r_0(\Om, \mu_0) & := & \biggl\{ \phi
\mu_0^r : \phi\in L^{1/r}(\Om,
\mu_0),
\int_\Om\phi\, d\mu= 0 \biggr\}.
\end{eqnarray*}

The elements of $\Pp^r(\Om, \mu_0)$, $\Mm^r(\Om, \mu_0)$, $\Ss^r(\Om
, \mu_0)$ are said to be \emph{dominated by} $\mu_0^r$.

If $\{ \mu_n \in\Ss(\Om)  :  n \in\N\}$ is a countable family
of (signed) finite measures, then they are dominated by the finite
measure $\mu_0 := \sum_n 2^{-n} \|\nu_n\|_{\mathrm{TV}}^{-1} |\nu_n|$ (cf.
e.g., \cite{Neveu1965}, Ex. IV.1.3). Therefore, any Cauchy sequence
$(\mu_{r;n})_{n \in\N} \in\Ss^r(\Om)$ is contained in $\Ss^r(\Om
, \mu_0)$ for some $\mu_0$. As the embedding $\Ss^r(\Om, \mu_0)
\hookrightarrow\Ss^r(\Om)$ is an isometry, $(\mu_{r;n})_{n \in\N}
\in\Ss^r(\Om, \mu_0)\cong L^{1/r}(\Om, \mu_0)$ is also a Cauchy
sequence and hence convergent.
Thus, $(\Ss^r(\Om), \|\cdot\|_{1/r})$ is a Banach space.

\begin{remark}\label{rem2.2}
The concept of $r$th powers of measures has been
indicated in \cite{Neveu1965}, Ex. IV.1.4. Moreover, if $\Om$ is a
manifold and $r=1/2$, then $\Ss^{1/2}(\Om)$ is even a Hilbert space
which has been considered in \cite{MR}, Section 6.9.1. In this case, the
diffeomorphism group of $\Om$ acts by isometries on $\Ss^{1/2}(\Om)$
\cite{GS1977}.
\end{remark}

The product of powers of measures can now be defined for all $r, s \in
(0,1)$ with $r + s \leq1$ and for measures $\phi\mu^r \in\Ss^r(\Om
, \mu)$ and $\psi\mu^s \in\Ss^s(\Om, \mu)$:
\begin{equation*}
\bigl(\phi\mu^r\bigr) \cdot\bigl(\psi\mu^s\bigr) := \phi
\psi\mu^{r+s}.
\end{equation*}
By definition $\phi\in L^{1/r}(\Om, \mu)$ and $\psi\in L^{1/s}(\Om
, \mu)$, whence H\"older's inequality implies that $\|\phi
\psi\|_{1/(r+s)} \leq\|\phi\|_{1/r} \|\psi\|_{1/s} < \infty$, so
that $\phi\psi\in L^{1/(r+s)}(\Om, \mu)$ and hence, $\phi\psi\mu
^{r+s} \in\Ss^{r+s}(\Om, \mu)$.
Since by (\ref{eq:equivalence}) this definition of the product is
independent of the choice of representative $\mu$, it follows that it
induces a bilinear product
%
\begin{equation}
\label{eq:mult-power} \cdot: \Ss^r(\Om) \times\Ss^s(\Om)
\longrightarrow\Ss^{r+s}(\Om ),\qquad \mbox{where }r, s, r + s \in(0,1],
\end{equation}
satisfying the H\"older inequality
\begin{equation*}
\|\nu_r \cdot\nu_s\|_{1/(r+s)} \leq\|
\nu_r\|_{1/r} \|\nu_s\|_{1/s},
\end{equation*}
so that the product in (\ref{eq:mult-power}) is a bounded bilinear map.

In analogy to Theorem~\ref{thm:TM-TP}, we can also determine the
tangent fibrations of the subsets $\Pp^r(\Om) \subset\Mm^r(\Om)
\subset\Ss^r(\Om)$.

\begin{proposition}\label{prop:TMr-SMr}
For each $\mu\in\Mm(\Om)$ ($\mu\in\Pp(\Om)$, respectively), the
tangent cones of $\Pp^r(\Om) \subset\Mm^r(\Om) \subset\Ss^r(\Om
)$ at $\mu^r$ are $T_{\mu^r} \Mm^r(\Om) = \Ss^r(\Om, \mu)$ and
$T_{\mu^r} \Pp^r(\Om) = \Ss^r_0(\Om, \mu)$, respectively, so that
the tangent fibrations are given as
\begin{equation*}
T\Mm^r(\Om) = \mathop{\dot\bigcup}_{\mu_r \in\Mm^r(\Om)} \Ss
^r(\Om, \mu) \subset\Mm^r(\Om) \times\Ss^r(
\Om)
\end{equation*}
and
\begin{equation*}
T\Pp^r(\Om) = \mathop{\dot\bigcup}_{\mu_r \in\Pp^r(\Om)} \Ss
^r_0(\Om, \mu) \subset\Pp^r(\Om) \times
\Ss^r(\Om).
\end{equation*}
\end{proposition}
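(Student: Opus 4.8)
The plan is to run the proof of Theorem~\ref{thm:TM-TP} again, replacing the linear structure of $\Ss(\Om)$ by that of $\Ss^r(\Om)$ and the total variation norm by $\|\cdot\|_{1/r}$. The only genuinely new feature, and the main obstacle, is that the defining constraint of $\Pp^r(\Om)$, namely $\|\phi\mu^r\|_{1/r}=1$, is \emph{not} linear in the $r$-power coordinates: it reads $\int_\Om\phi^{1/r}\,d\mu=1$. Hence the clean argument available for $\Pp(\Om)$, that total mass is a bounded linear functional, is no longer directly applicable, and differentiating the constraint runs into the nonlinear superposition operator $x\mapsto x^{1/r}$, whose differentiability on $L^{1/r}$ is delicate near the set where the base density vanishes.

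First I would make a preliminary reduction. Given $\nu\in T_{\mu^r}\Mm^r(\Om)$ realized by a $C^1$ curve $c$ with $c(0)=\mu^r$, I use the countable domination property established above (the one proving completeness of $\Ss^r(\Om)$) together with the fact that each $\Ss^r(\Om,\mu_0)\cong L^{1/r}(\Om,\mu_0)$ sits isometrically as a closed, hence complete, subspace of $\Ss^r(\Om)$: by continuity of $c$ one finds a single $\mu_0\in\Mm(\Om)$ with $\mu\leq\mu_0$ such that the whole curve lies in $\Ss^r(\Om,\mu_0)$. After the identification $\Ss^r(\Om,\mu_0)\cong L^{1/r}(\Om,\mu_0)$ the curve becomes a $C^1$ curve $t\mapsto g_t$ in $L^{1/r}(\Om,\mu_0)$ with $g_t\geq0$ and $g_0=h^r$, where $h:=\frac{d\mu}{d\mu_0}$. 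For the inclusion $T_{\mu^r}\Mm^r(\Om)\subset\Ss^r(\Om,\mu)$ I would adapt the minimum argument of Theorem~\ref{thm:TM-TP}: for every measurable $B\subset\{h=0\}$ the map $g\mapsto\int_B g\,d\mu_0$ is a bounded linear functional on $L^{1/r}(\Om,\mu_0)$ (Hölder, $\mu_0$ finite), so $t\mapsto\int_B g_t\,d\mu_0$ is $C^1$, nonnegative, and zero at $t=0$; its derivative $\int_B\dot g_0\,d\mu_0$ therefore vanishes, and since $B$ is arbitrary, $\dot g_0=0$ a.e.\ on $\{h=0\}$. Setting $\psi:=\dot g_0/h^r$ on $\{h>0\}$ and $0$ elsewhere exhibits $\nu=\dot g_0\mu_0^r=\psi\mu^r\in\Ss^r(\Om,\mu)$. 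For the reverse inclusion I would use the \emph{same} explicit curve as in Theorem~\ref{thm:TM-TP}, $g_t=p(\cdot;t)$ with $p=1+t\psi$ where $t\psi\geq0$ and $p=\exp(t\psi)$ where $t\psi<0$; the bound $|\partial_t p|\leq|\psi|$, together with $|\psi|^{1/r}\in L^1(\Om,\mu)$ (as $\mu$ is finite), lets dominated convergence run in $L^{1/r}(\Om,\mu)$ and shows $t\mapsto g_t\mu^r$ is a $C^1$ curve in $\Mm^r(\Om)$ with derivative $\psi\mu^r=\nu$.

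For $\Pp^r(\Om)$, the inclusion $\Pp^r(\Om)\subset\Mm^r(\Om)$ already yields $\nu=\psi\mu^r\in\Ss^r(\Om,\mu)$ with $\dot g_0=0$ a.e.\ on $\{h=0\}$, so it remains to extract the linear constraint $\int_\Om\psi\,d\mu=0$. This is where I must circumvent the nonlinear constraint. The device is convexity of $\Theta(x)=x^{1/r}$, valid since $1/r\geq1$: the tangent-line inequality $\Theta(g_t)\geq\Theta(g_0)+\tfrac1r g_0^{1/r-1}(g_t-g_0)$, integrated against $\mu_0$ and using $\int_\Om g_t^{1/r}\,d\mu_0=\int_\Om g_0^{1/r}\,d\mu_0=1$, gives $\ell(g_t-g_0)\leq0$ for all $t$, where $\ell(g):=\int_\Om h^{1-r}g\,d\mu_0$. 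This $\ell$ is bounded and linear on $L^{1/r}(\Om,\mu_0)$ because its weight lies in the conjugate space, $\int_\Om(h^{1-r})^{1/(1-r)}\,d\mu_0=\mu(\Om)<\infty$. Hence $t\mapsto\ell(g_t-g_0)$ is $C^1$, nonpositive, and vanishes at $t=0$, so its derivative $\ell(\dot g_0)=\int_\Om h^{1-r}\dot g_0\,d\mu_0=\int_\Om\psi\,d\mu$ must be zero, i.e.\ $\nu\in\Ss^r_0(\Om,\mu)$.

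For the reverse inclusion I would take the explicit $\Mm^r$-curve $g_t\mu^r$ above and renormalize, $\lambda_t:=(g_t\mu^r)/\|g_t\mu^r\|_{1/r}\in\Pp^r(\Om)$. Here a direct dominated-convergence estimate replaces the convexity trick: bounding $|\partial_t g_t^{1/r}|$ by a fixed multiple of $|\psi|+|\psi|^{1/r}\in L^1(\Om,\mu)$ shows that $N(t):=\int_\Om g_t^{1/r}\,d\mu$ is $C^1$ with $N'(0)=\tfrac1r\int_\Om\psi\,d\mu$, so the normalizing factor is $C^1$ with vanishing derivative at $t=0$ precisely because $\int_\Om\psi\,d\mu=0$; consequently $\lambda_0=\mu^r$ and $\dot\lambda_0=\psi\mu^r=\nu$. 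I expect the convexity step for the $\Pp^r$ inclusion to be the crux, and the bookkeeping of the localization and of the two explicit curves to be routine adaptations of Theorem~\ref{thm:TM-TP}; indeed, specializing to $r=1$ collapses the tangent-line inequality to the linearity of total mass and recovers that theorem verbatim.
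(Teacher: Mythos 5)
Your proof is correct, and for most of its length it follows the same route as the paper: localize the curve over a single dominating measure $\mu_0$ using the countable-domination argument and the closedness of $\Ss^r(\Om,\mu_0)\cong L^{1/r}(\Om,\mu_0)$, reuse the explicit curves $1+t\psi$ / $\exp(t\psi)$ and their normalizations for the easy inclusions, and run the minimum argument on the bounded linear functionals $g\mapsto\int_B g\,d\mu_0$, $B\subset\{h=0\}$, for $T_{\mu^r}\Mm^r(\Om)\subset\Ss^r(\Om,\mu)$ (the paper phrases this as evaluating the curve $t\mapsto(\mu_t^r\cdot\hat\mu^{1-r})(A)$, which is the same computation). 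Where you genuinely depart from the paper is the inclusion $T_{\mu^r}\Pp^r(\Om)\subset\Ss^r_0(\Om,\mu)$: the paper disposes of the whole converse direction with the one-line instruction to apply the argument of Theorem~\ref{thm:TM-TP} to $t\mapsto(\mu_t^r\cdot\hat\mu^{1-r})(A)$, but multiplication by $\hat\mu^{1-r}$ only linearizes the domination constraint, not the normalization $\|\mu_t^r\|_{1/r}=1$ (the image curve $g_t\hat\mu$ has total mass $\int g_t\,d\hat\mu$, not $\int g_t^{1/r}\,d\hat\mu$), so the paper's sketch does not by itself produce $\int_\Om\psi\,d\mu=0$. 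Your tangent-line inequality for the convex function $x\mapsto x^{1/r}$, turning the spherical constraint into the one-sided linear inequality $\ell(g_t-g_0)\le 0$ for the bounded functional $\ell(g)=\int_\Om h^{1-r}g\,d\mu_0$ and then extracting $\ell(\dot g_0)=0$ from the interior maximum at $t=0$, is a complete and correct way to close this gap; the only points worth writing out are that the inequality holds also on $\{h=0\}$ (where the tangent slope is $0$ for $1/r>1$) and that $\psi\in L^{1/r}(\Om,\mu)$, both of which are immediate. An alternative closer to the paper's toolbox is to compose the curve with the $C^1$-map $\pi^{1/r}:\Ss^r(\Om)\to\Ss(\Om)$ of Proposition~\ref{prop:powers-C1}, obtaining a $C^1$-curve in $\Pp(\Om)$ whose derivative $\tfrac1r\,\mu^{1-r}\cdot\nu=\tfrac1r\,\psi\mu$ must lie in $\Ss_0(\Om,\mu)$ by Theorem~\ref{thm:TM-TP}; this gives the same conclusion but would require stating that proposition before the present one, whereas your convexity argument is self-contained at this point of the text.
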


\begin{proof}
We have to adapt the proof of Theorem~\ref{thm:TM-TP}.
The proof of the statements $\Ss^r(\Om, \mu) \subset T_{\mu^r}\Mm
^r(\Om)$ and $\Ss_0^r(\Om, \mu) \subset T_{\mu^r}\Pp^r(\Om)$ is
identical to that of the corresponding statement in Theorem~\ref
{thm:TM-TP}; just as in that case, one shows that for $\phi\in
L^{1/r}(\Om, \mu_0)$ the curves $\mu_t^r := p({\omega}; t)\mu_0^r$
with $p({\omega};t) := 1 + t\phi({\omega})$ if $t\phi({\omega})
\geq0$ and $p({\omega};\xi) = \exp(t\phi({\omega}))$ if $t\phi
({\omega}) < 0$ is a differentiable curve in $\Mm^r(\Om)$, and
$\lambda_t^r := \mu_t^r/ \|\mu_t^r\|_{1/r}$ is a differentiable
curve in $\Pp^r(\Om)$, and their derivative is $\phi\mu_0^r$ at $t=0$.

In order to show the other direction, let $(\mu_t^r)_{t \in(-\eps,
\eps)}$ be a curve in $\Mm^r(\Om)$. Since there is a measure $\hat
\mu$ dominating the countable family $(\mu_t^r)_{t \in\Q\cap(-\eps
, \eps)}$ and since $\Ss^r(\Om, \hat\mu) \subset\Ss^r(\Om)$ is
closed, it follows that $\mu_t^r \in\Mm(\Om, \hat\mu)$ for all
$t$. Now we can apply the argument from the proof of Theorem~\ref
{thm:TM-TP} to the curve $t \mapsto(\mu_t^r \cdot\hat\mu
^{1-r})(A)$ for $A \subset\Om$.
\end{proof}

Besides multiplication of roots of measures, we also wish to take their
powers. Here, we have two possibilities to deal with signs. For $0 < k
\leq r^{-1}$ and $\nu_r = \phi\mu^r \in\Ss^r(\Om)$ we define
\begin{equation*}
|\nu_r|^k := |\phi|^k \mu^{rk}
\quad \mbox{and} \quad \tilde\nu _r^k := \sign(\phi) |
\phi|^k \mu^{rk}.
\end{equation*}
Since $\phi\in L^{1/r}(\Om, \mu)$, it follows that $|\phi|^k \in
L^{1/rk}(\Om, \mu)$, so that $|\nu_r|^k, \tilde\nu_r^k \in\Ss
^{rk}(\Om)$. By (\ref{eq:equivalence}) these powers are well defined,
independent of the choice of the measure $\mu$, and, moreover,
%
\begin{equation}
\label{eq:power-norm} \bigl\||\nu_r|^k \bigr\|
_{1/(rk)} = \bigl\|\tilde\nu_r^k\bigr\|
_{1/(rk)} = \|\nu_r\|_{1/r}^k.
\end{equation}

\begin{proposition} \label{prop:powers-C1}
Let $r \in(0,1]$ and $ 0 < k \leq1/r$, and consider the maps
\[
\pi^k, \tilde\pi^k: \Ss^r(\Om)
\longrightarrow\Ss^{rk}(\Om),\qquad %
\begin{array} {l}
\pi^k(\nu) := |\nu|^k,
\\
\tilde\pi^k(\nu) := \tilde\nu^k. \end{array} %
\]
Then $\pi^k$, $\tilde\pi^k$ are continuous maps. Moreover, for $1 < k
\leq1/r$ they are $C^1$-maps between Banach spaces, and their
derivatives are given as
%
\begin{equation}
\label{eq:form-dpi} d_{\nu_r}\tilde\pi^k(\rho_r) = k |
\nu_r|^{k-1} \cdot\rho_r\quad \mbox{and}\quad
d_{\nu_r} \pi^k (\rho_r) = k \tilde\nu
_r^{k-1} \cdot\rho_r.
\end{equation}
\end{proposition}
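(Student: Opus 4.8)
The plan is to reduce the entire statement to the theory of superposition (Nemytskii) operators between Lebesgue spaces. Fix $\nu_r=\phi\mu^r\in\Ss^r(\Om)$ together with a perturbation $\rho_r$. Choosing a measure $\mu_0\in\Mm(\Om)$ that dominates the representatives of both, I would use the isometric identification $\Ss^r(\Om,\mu_0)\cong L^{1/r}(\Om,\mu_0)$ furnished by (\ref{eq:norm-Sr}) to regard $\nu_r,\rho_r$ as functions in $L^p(\Om,\mu_0)$ with $p:=1/r$. Since $\pi^k(\psi\mu_0^r)=|\psi|^k\mu_0^{rk}$ and $\tilde\pi^k(\psi\mu_0^r)=\sign(\psi)|\psi|^k\mu_0^{rk}$ lie in $\Ss^{rk}(\Om,\mu_0)\cong L^{q}(\Om,\mu_0)$ with $q:=1/(rk)=p/k\ge1$, in these coordinates $\pi^k$ and $\tilde\pi^k$ become the scalar superposition operators $\phi\mapsto|\phi|^k$ and $\phi\mapsto\sign(\phi)|\phi|^k$. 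The crucial point is that, by (\ref{eq:def-normSr}) and (\ref{eq:power-norm}), every norm occurring is one of the intrinsic norms $\|\cdot\|_{1/r},\|\cdot\|_{1/(rk)}$, which are invariant under the isometric embeddings $\Ss^\bullet(\Om,\mu_0)\hookrightarrow\Ss^\bullet(\Om)$; hence any bound I derive in $L^p(\Om,\mu_0)$ depending only on $\|\nu_r\|_{1/r}$ and $\|\rho_r\|_{1/r}$ is automatically independent of the auxiliary measure $\mu_0$ and therefore valid on all of $\Ss^r(\Om)$.

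For continuity, which is asserted for every $0<k\le1/r$, I would use the elementary pointwise inequalities for the scalar maps: for $0<k\le1$ the maps $t\mapsto|t|^k$ and $t\mapsto\sign(t)|t|^k$ are H\"older, $\bigl||a|^k-|b|^k\bigr|\le|a-b|^k$, while for $k\ge1$ they are locally Lipschitz with $\bigl||a|^k-|b|^k\bigr|\le k(|a|^{k-1}+|b|^{k-1})|a-b|$. Combined with H\"older's inequality (using $1/q=(k-1)/p+1/p$ in the second regime) these give, for $\phi_n\to\phi$ in $L^p(\Om,\mu_0)$, a bound $\||\phi_n|^k-|\phi|^k\|_{L^q}\le C(\|\phi_n\|_p,\|\phi\|_p)\,\|\phi_n-\phi\|_p^{\min(k,1)}\to0$, and the same for the signed map; by the reduction above this proves continuity of $\pi^k,\tilde\pi^k$ on $\Ss^r(\Om)$.

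For differentiability in the range $1<k\le1/r$, I would first note that the candidate derivatives are bounded: with $t:=p/(k-1)$ one has $1/q=1/t+1/p$ and $\||\phi|^{k-1}\|_{L^t}=\|\phi\|_p^{k-1}$, so by H\"older the maps $\rho_r\mapsto k|\nu_r|^{k-1}\cdot\rho_r$ and $\rho_r\mapsto k\tilde\nu_r^{k-1}\cdot\rho_r$ from (\ref{eq:mult-power}) are bounded into $\Ss^{rk}(\Om)$. Writing $g$ for the relevant $C^1$ scalar function, with $g'(t)=k|t|^{k-1}$ (resp. $k\,\sign(t)|t|^{k-1}$), the integral form of the remainder gives the pointwise identity
\[
g(\phi+h)-g(\phi)-g'(\phi)h=F(\phi,h)\,h,\qquad F(\phi,h):=\int_0^1\bigl(g'(\phi+th)-g'(\phi)\bigr)\,dt,
\]
and H\"older yields $\|R\|_{L^q}\le\|F(\phi,h)\|_{L^t}\,\|h\|_{L^p}$, so it suffices to show $\|F(\phi,h)\|_{L^t}\to0$ as $\|h\|_p\to0$. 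By Minkowski's integral inequality $\|F(\phi,h)\|_{L^t}\le k\int_0^1\bigl\||\phi+th|^{k-1}-|\phi|^{k-1}\bigr\|_{L^t}\,dt$, and each integrand tends to $0$ by the continuity of the exponent-$(k-1)$ power operator from $L^p$ to $L^{t}=L^{p/(k-1)}$ established in the previous step. The resulting bound depends only on $\|\nu_r\|_{1/r}$ and $\|\rho_r\|_{1/r}$, so by the uniformity of the first paragraph the difference quotient tends to $0$ over all of $\Ss^r(\Om)$, giving (\ref{eq:form-dpi}). That $\pi^k,\tilde\pi^k$ are $C^1$ then follows since the operator norm of the difference of two such multiplication operators is bounded, again by H\"older, by $k\,\bigl\||\nu_r|^{k-1}-|\nu_r'|^{k-1}\bigr\|_{L^t}$, which is small by the same continuity of the exponent-$(k-1)$ power operator.

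The main obstacle will be the remainder estimate together with its uniformity. Differentiability is a statement quantified over all perturbations $\rho_r\in\Ss^r(\Om)$, whereas the convenient $L^p$-picture holds only after fixing a dominating measure that itself depends on $\rho_r$; the argument closes precisely because the final bound is expressed through the intrinsic norms and is therefore insensitive to that choice. A secondary subtlety is the low-regularity regime $1<k<2$, where $g'(t)=k|t|^{k-1}$ is merely H\"older rather than Lipschitz, so the required continuity of the exponent-$(k-1)$ power operator must be extracted from the sublinear inequality $\bigl||a|^{k-1}-|b|^{k-1}\bigr|\le|a-b|^{k-1}$ rather than from a mean value bound; treating both regimes uniformly is the part demanding the most care.
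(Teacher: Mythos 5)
Your proof is correct, and while it shares the paper's overall skeleton (fix a common dominating measure $\mu_0$, work in $L^{1/r}(\Om,\mu_0)$ via the isometric identification, and reduce differentiability of the exponent-$k$ map to continuity of the exponent-$(k-1)$ map), the execution differs in two genuine ways. First, the paper only proves continuity directly in the regime $0<k\leq 1$ (via the scalar bounds $\bigl||x+y|^k-|x|^k\bigr|\leq C_k|y|^k$) and recovers continuity for $k>1$ from differentiability by an induction on $\lceil k\rceil$; you instead prove continuity of all the power operators directly in both regimes, using the sublinear inequality for exponents $\leq 1$ and the local Lipschitz bound $\bigl||a|^k-|b|^k\bigr|\leq k(|a|^{k-1}+|b|^{k-1})|a-b|$ plus H\"older for exponents $\geq 1$, which lets you dispense with the induction entirely. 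Second, for the remainder estimate the paper applies the mean value theorem pointwise on $\Om$, producing a function $\eta:\Om\to(0,1)$ and the identity $\pi^k(\nu_1+\nu_2)-\pi^k(\nu_1)=k\,\tilde\pi^{k-1}(\nu_1+\nu_\eta)\cdot\nu_2$ (which tacitly requires a measurable selection of the intermediate point), whereas your integral form $F(\phi,h)=\int_0^1(g'(\phi+th)-g'(\phi))\,dt$ combined with Minkowski's integral inequality sidesteps that measurability issue cleanly and gives the same conclusion, since $\|th\|_p\leq\|h\|_p$ makes the integrand small uniformly in $t\in[0,1]$. Your observation that all bounds are expressed through the intrinsic norms $\|\cdot\|_{1/r}$, $\|\cdot\|_{1/(rk)}$ and hence are insensitive to the choice of $\mu_0$ is exactly the point that makes the paper's computation legitimate as well, even though the paper leaves it implicit. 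The only cosmetic slip is that the derivative of $\pi^k$ is multiplication by $k\tilde\nu_r^{k-1}$ (the signed power) and that of $\tilde\pi^k$ by $k|\nu_r|^{k-1}$, so in your final $C^1$ estimate the relevant quantity for $\pi^k$ is $k\|\tilde\nu_r^{k-1}-\tilde{\nu'}_r^{k-1}\|_{1/(r(k-1))}$, controlled by continuity of $\tilde\pi^{k-1}$ rather than of $\pi^{k-1}$; this does not affect the argument.
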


Observe that for $k = 1$, $\pi^1(\nu_r) = |\nu_r|$ fails to be
$C^1$, whereas $\tilde\pi^1(\nu_r)= \nu_r$, so that $\tilde\pi^1$
is the identity and hence a $C^1$-map.

\begin{proof}[Proof of Proposition~\ref{prop:powers-C1}]
Let us first assume that $0 < k \leq1$. We assert that{proof}
in this case, there are constants $C_k$, $\tilde C_k > 0$ such that for
all $x, y \in\R$
%
\begin{equation}
\label{eq:p-estimate} \begin{aligned} &\bigl\||x+y|^k-|x|^k
\bigr\|\leq C_k |y|^k\quad \mbox{and}
\\
&\bigl\|\sign(x+y) |x+y|^k - \sign(x) |x|^k\bigr
\|\leq\tilde C_k |y|^k. \end{aligned}
\end{equation}
Namely, by homogeneity it suffices to show this for $y = 1$, and since
the functions
\[
x \longmapsto|x+1|^k-|x|^k\quad \mbox{and} \quad x
\longmapsto \sign(x+1) |x+1|^k - \sign(x) |x|^k
\]
are continuous and have finite limits for $x \to\pm\infty$, it
follows that they are bounded, showing (\ref{eq:p-estimate}).

Let $\nu_1, \nu_2 \in\Ss^r(\Om)$, and choose $\mu_0 \in\Mm(\Om
)$ such that $\nu_1, \nu_2 \in\Ss^r(\Om, \mu_0)$, i.e., $\nu_i =
\phi_i \mu_0^r$ with $\phi_i \in L^{1/r}(\Om, \mu_0)$. Then\vspace*{-1pt}
\begin{eqnarray*}
\bigl\|\pi^k(\nu_1 + \nu_2) -
\pi^k(\nu_1)\bigr\|_{1/(rk)} & = & \bigl
\||\phi_1 + \phi_2|^k - |
\phi_1|^k\bigr\|_{1/(rk)}
\\
& \leq& C_k \bigl\||\phi_2|^k\bigr
\|_{1/rk} \qquad \mbox{by (\ref
{eq:p-estimate})}
\\
& = & C_k \|\nu_2\|_{1/r}^k
\qquad \mbox{by (\ref{eq:power-norm})},
\end{eqnarray*}
so that $\lim_{\|\nu_2\|_{1/r} \to0} \|\pi^k(\nu_1 + \nu_2) - \pi
^k(\nu_1)\|_{1/(rk)} = 0$, showing the continuity of $\pi^k$ for \mbox{$0 <
k \leq1$}. The continuity of $\tilde\pi^k$ follows analogously.

Now let us assume that $1 < k \leq1/r$. In this case, the functions\vspace*{-1pt}
\[
x \longmapsto|x|^k \quad \mbox{and} \quad x \longmapsto\sign(x)
|x|^k
\]
with $x \in\R$ are $C^1$-maps with respective derivatives\vspace*{-1pt}
\[
x \longmapsto k \sign(x) |x|^{k-1}\quad \mbox{and}\quad x \longmapsto k
|x|^{k-1}.
\]

Thus, if we pick $\nu_i = \phi_i \mu_0^r$ as above, then by the mean
value theorem we have\vspace*{-1pt}
\begin{eqnarray*}
\pi^k(\nu_1 + \nu_2) - \pi^k(
\nu_1) & = & \bigl(\|\phi_1 + \phi_2
\|^k - \|\phi_1\|^k\bigr)
\mu_0^{rk}
\\
& = & k \sign(\phi_1 + \eta\phi_2) \|
\phi_1 + \eta\phi_2\|^{k -
1}
\phi_2 \mu_0^{rk}
\\
& = & k \sign(\phi_1 + \eta\phi_2) \|
\phi_1 + \eta\phi_2\|^{k -
1}
\mu_0^{r(k-1)} \cdot\nu_2
\end{eqnarray*}
for some function $\eta: \Om\to(0,1)$. If we let $\nu_\eta:= \eta
\phi_2 \mu_0^r$, then $\|\nu_\eta\|_{1/r} \leq\|\nu_2\|_{1/r}$,
and we get\vspace*{-1pt}
\[
\pi^k(\nu_1 + \nu_2) - \pi^k(
\nu_1) = k \tilde\pi^{k-1}(\nu_1 +
\nu_\eta) \cdot\nu_2.
\]
With the definition of $d_{\nu_1}\tilde\pi^k$ from (\ref
{eq:form-dpi}) we have\vspace*{-1pt}
\begin{align*}
&\bigl\|\pi^k(\nu_1 + \nu_2) -
\pi^k(\nu_1) - d_{\nu_1} \pi^k(\nu
_2)\bigr\|_{1/(rk)}
\\
&\qquad = \bigl\|k \bigl(\tilde\pi^{k-1}(\nu_1 +
\nu_\eta) - \tilde\pi ^{k-1}(\nu_1)\bigr) \cdot
\nu_2\bigr\|_{1/(rk)}
\\
&\qquad \leq k \bigl\|\tilde\pi^{k-1}(\nu_1 +
\nu_\eta) - \tilde\pi ^{k-1}(\nu_1)\bigr\|
_{1/(r(k-1))} \|\nu_2\|_{1/r}
\end{align*}
and hence,\vspace*{-1pt}
\[
\frac{\|\pi^k(\nu_1 + \nu_2) - \pi^k(\nu_1) - d_{\nu_1} \pi
^k(\nu_2)\|_{\frac{1}{rk}}}{\|\nu_2\|_{\frac{1}r}} \leq k \bigl\|\tilde\pi ^{k-1}(
\nu_1 + \nu_\eta) - \tilde\pi^{k-1}(
\nu_1)\bigr\|_{\frac{1}{r(k-1)}}.
\]
Thus, the differentiability of $\pi^k$ will follow if\vspace*{-1pt}
\[
\bigl\|\tilde\pi^{k-1}(\nu_1 + \nu_\eta) -
\tilde\pi^{k-1}(\nu_1)\bigr\|_{1/(r(k-1))}
\xrightarrow{\|\nu_2\|_{1/r} \to0} 0,
\]
and because of $\|\nu_\eta\|_{1/r} \leq\|\nu_2\|_{1/r}$, this is
the case if $\tilde\pi^{k - 1}$ is continuous.\vadjust{\goodbreak}

Analogously, one shows that $\tilde\pi^k$ is differentiable if $\pi
^{k-1}$ is continuous.

Since we already know continuity of $\pi^k$ and $\tilde\pi^k$ for $0
< k \leq1$, and since $C^1$-maps are continuous, the claim now follows
by induction on $\lceil k \rceil$.
\end{proof}

Thus, (\ref{eq:form-dpi}) implies that the differentials of $\pi^k$
and $\tilde\pi^k$ (which coincide on $\Pp^r(\Om)$ and $\Mm^r(\Om
)$) yield continuous maps
\begin{equation*}
d\pi^k = d\tilde\pi^k\, :\, %
\begin{array} {l}
T\Pp^r(\Om) \longrightarrow T\Pp^{rk}(\Om),
\\
T\Mm^r(\Om) \longrightarrow T\Mm^{rk}(\Om), \end{array}
\qquad %
(\mu, \rho) \longmapsto k \mu^{rk-r} \cdot\rho.
\end{equation*}

\section{Congruent embeddings} \label{sec:congruent}

\subsection{Statistics and congruent embeddings}\label{sec3.1}

Given two measurable sets $\Omega$ and $\Omega'$, a measurable map
\[
\kappa:\Omega\longrightarrow\Omega'
\]
will be called a \emph{statistic}.
Any (signed) measure $\mu$ on $\Omega$, induces a (signed) measure
$\kappa_{*}\mu$ on $\Omega'$, via
%
\begin{equation}
\label{eq:push-forward} \kappa_{*}\mu(A) := \mu\bigl(\kappa^{-1}A
\bigr),
\end{equation}
which is called the \emph{push-forward of} $\mu$ \emph{by} $\kappa$.
Note that
%
\begin{equation}
\label{eq:push-fwd-kappa} \kappa_*: \Ss(\Om) \longrightarrow\Ss\bigl(\Om'
\bigr)
\end{equation}
is a bounded linear map which is monotone, that is, it maps nonnegative
measures to nonnegative measures. When using the Jordan decomposition
(\ref{Jordan-dec}), we obtain
\[
\|\kappa_*\mu\|_{\mathrm{TV}} = \|\kappa_*\mu_+ - \kappa_*
\mu_-\|\bigl(\Om'\bigr) \leq\kappa_*\mu_+\bigl(
\Om'\bigr) + \kappa_*\mu_-\bigl(\Om'\bigr) = |\mu|(
\Om) = \|\mu\|_{\mathrm{TV}}.
\]
Thus,
%
\begin{equation}
\label{eq:mu-abs} \|\kappa_*\mu\|_{\mathrm{TV}} \leq\|\mu\|_{\mathrm{TV}} \quad
\mbox{with equality iff} \quad \kappa_*\mu_+ \perp\kappa_*\mu_-.
\end{equation}
In particular, $\kappa_*$ preserves the total variation of nonnegative
measures, and whence maps probability measures to probability measures, i.e.
\[
\kappa_*\bigl(\Pp(\Om)\bigr) \subset\Pp\bigl(\Om'\bigr).
\]
Furthermore, if $\mu_1$ dominates $\mu_2$, then $\kappa_*\mu_1$
dominates $\kappa_*\mu_2$ by (\ref{eq:push-forward}), whence $\kappa
_*$ yields bounded linear maps
%
\begin{equation}
\label{eq:kappa-1} \kappa_*: \Ss(\Om, \mu) \longrightarrow\Ss\bigl(
\Om', \kappa_*\mu\bigr),
\end{equation}
and if we write
%
\begin{equation}
\label{eq:cond-exp} \kappa_*(\phi\mu) = \phi' \kappa_*\mu,
\end{equation}
then $\phi' \in L^1(\Om', \kappa_*\mu)$ is called the \emph{conditional expectation of} $\phi\in L^1(\Om, \mu)$ \emph{given} $\kappa$.
This yields a bounded linear map
%
\begin{equation}
\label{eq:cond-ex-L1} \kappa_*^\mu: L^1(\Om, \mu)
\longrightarrow L^1\bigl(\Om', \mu'\bigr),\qquad
\phi\longmapsto\phi'
\end{equation}
with $\phi'$ from (\ref{eq:cond-exp}).

We also define the pull-back of a measurable function $\phi': \Om'
\to\R$ as
\[
\kappa^*\phi' := \phi' \circ\kappa.
\]
If $A' \subset\Om'$ and $A := \kappa^{-1}(A')$ we have $\chi_A =
\kappa^*\chi_{A'}$, and thus, (\ref{eq:push-forward}) is equivalent
to $\chi_{A'}\kappa_*\mu= \kappa_*(\chi_A \mu) = \kappa_*(\kappa
^*\chi_{A'} \mu)$, and by linearity and the density of step functions
in $L^1(\Om', \kappa_*\mu)$ this implies for $\phi' \in L^1(\Om',
\kappa_*\mu)$
%
\begin{equation}
\label{eq:pushforward-mult} \kappa_*\bigl(\kappa^*\phi'\mu\bigr) =
\phi' \kappa_*\mu \quad \mbox {or, equivalently,} \quad
\kappa_*^\mu\bigl(\kappa^*\phi'\bigr) =
\phi'.
\end{equation}

Recall that $\Mm(\Om)$ and $\Ss(\Om)$ denote the spaces of \emph{all} (signed) measures on $\Om$, whereas $\Mm(\Om, \mu)$ and $\Ss(\Om
, \mu)$ denote the subspaces of the (signed) measures on $\Om$ which
are dominated by $\mu$.

\begin{definition}[Congruent embedding]\label{def:cong-emb}
Let $\kappa: \Om\to\Om'$ be a statistic and $\mu' \in\Mm(\Om
')$. A $\kappa$-\emph{congruent embedding} is a bounded linear map
$K_*: \Ss(\Om', \mu') \to\Ss(\Om)$ such that:
\begin{enumerate}
\item[(1)]
$K_*$ is monotone, i.e., it maps nonnegative measures to nonnegative
measures, or shortly: $K_*(\Mm(\Om', \mu')) \subset\Mm(\Om)$.
\item[(2)]
$\kappa_* K_* (\nu') = \nu'$ for all $\nu' \in\Ss(\Om', \mu')$.
\end{enumerate}

Furthermore, the image of a $\kappa$-congruent embedding $K_*$ in $\Ss
(\Om)$ is called a $\kappa$-\emph{congruent subspace of} $\Ss(\Om)$.
\end{definition}

\begin{example} \label{ex:cong-emb}
Let $\kappa: \Om\to\Om'$ be a statistic, let $\mu\in\Mm(\Om)$
and $\mu' := \kappa_*\mu\in\Mm(\Om')$. Then the map
%
\begin{equation}
\label{congruent-mu} K_\mu: \Ss\bigl(\Om', \mu'
\bigr) \longrightarrow\Ss(\Om, \mu) \subset\Ss (\Om), \qquad \phi'
\mu' \longmapsto\kappa^*\phi' \mu
\end{equation}
for all $\phi' \in L^1(\Om', \mu')$ is a $\kappa$-congruent
embedding, since
\[
\kappa_*\bigl(K_\mu\bigl(\phi' \mu'\bigr)
\bigr) = \kappa_*\bigl( \kappa^*\phi' \mu\bigr) \stackrel{
\text{(\ref{eq:pushforward-mult})}}= \phi' \kappa_*\mu= \phi'
\mu'.
\]
\end{example}

We shall now see that the above example exhausts \emph{all}
possibilities of congruent embeddings.

\begin{proposition} \label{prop:cong-emb}
Let $\kappa: \Om\to\Om'$ be a statistic, let $K_*: \Ss(\Om', \mu
') \to\Ss(\Om)$ for some $\mu' \in\Mm(\Om')$ be a $\kappa
$-congruent embedding, and let $\mu:= K_*\mu' \in\Mm(\Om)$.

Then $K_* = K_\mu$ with the map $K_\mu$ given in (\ref{congruent-mu}).
\end{proposition}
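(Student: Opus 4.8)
The plan is to show that the two bounded linear maps $K_*$ and $K_\mu$ agree on the indicator measures $\{\chi_{A'}\mu' : A' \subset \Om' \mbox{ measurable}\}$, and then to extend by linearity and continuity. First I would record two consequences of the hypotheses. Since $\mu' \geq 0$ and $K_*$ is monotone, $\mu = K_*\mu' \in \Mm(\Om)$; and applying property~(2) of Definition~\ref{def:cong-emb} to $\nu' = \mu'$ gives $\kappa_*\mu = \kappa_* K_* \mu' = \mu'$. Identifying $\Ss(\Om',\mu')$ with $L^1(\Om',\mu')$, the assertion $K_* = K_\mu$ becomes the statement that $K_*(\phi'\mu') = \kappa^*\phi' \cdot \mu$ for every $\phi' \in L^1(\Om',\mu')$.

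For a measurable $A' \subset \Om'$, I would use the decomposition $\mu' = \chi_{A'}\mu' + \chi_{\Om'\setminus A'}\mu'$ into nonnegative measures. Applying the linear map $K_*$ and invoking monotonicity, both summands are mapped to nonnegative measures whose sum is $\mu$, so $0 \leq K_*(\chi_{A'}\mu') \leq \mu$ as measures. In particular $K_*(\chi_{A'}\mu')$ is dominated by $\mu$, so by the Radon--Nikodym theorem I may write $K_*(\chi_{A'}\mu') = h_{A'}\,\mu$ with $h_{A'} \in L^1(\Om,\mu)$ and $0 \leq h_{A'} \leq 1$ $\mu$-almost everywhere.

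The key step is to pin down $h_{A'}$ using the retraction property $\kappa_*(h_{A'}\mu) = \chi_{A'}\mu'$. Evaluating this identity on $\Om'\setminus A'$ gives $\int_{\kappa^{-1}(\Om'\setminus A')} h_{A'}\, d\mu = 0$, and since $h_{A'} \geq 0$ this forces $h_{A'} = 0$ $\mu$-a.e.\ on $\Om\setminus\kappa^{-1}(A')$. Applying the same argument to $K_*(\chi_{\Om'\setminus A'}\mu') = (1-h_{A'})\mu$ forces $h_{A'} = 1$ $\mu$-a.e.\ on $\kappa^{-1}(A')$. Hence $h_{A'} = \chi_{\kappa^{-1}(A')} = \kappa^*\chi_{A'}$ $\mu$-a.e., i.e.\ $K_*(\chi_{A'}\mu') = \kappa^*\chi_{A'}\cdot\mu = K_\mu(\chi_{A'}\mu')$.

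By linearity, $K_*$ and $K_\mu$ then agree on all simple functions times $\mu'$, which form a dense subspace of $L^1(\Om',\mu') \cong \Ss(\Om',\mu')$. Since $K_*$ is bounded by hypothesis and $K_\mu$ is an isometry (indeed $\|\kappa^*\phi'\cdot\mu\|_{\mathrm{TV}} = \int_\Om|\phi'\circ\kappa|\,d\mu = \int_{\Om'}|\phi'|\,d(\kappa_*\mu) = \|\phi'\mu'\|_{\mathrm{TV}}$, using $\kappa_*\mu = \mu'$), both maps are continuous and therefore agree on all of $\Ss(\Om',\mu')$. The main obstacle is the third paragraph: it is the interplay of monotonicity, which confines the density to $0 \leq h_{A'} \leq 1$, with the retraction property, which rigidly forces it to be the indicator of the preimage $\kappa^{-1}(A')$. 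Once this disjoint-support argument is in place, the passage from indicators to $L^1$ by density is routine.
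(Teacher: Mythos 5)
Your proof is correct and follows essentially the same route as the paper's: both reduce to indicator measures, decompose $\mu' = \chi_{A'}\mu' + \chi_{\Om'\setminus A'}\mu'$, use monotonicity to get two nonnegative summands, and then use the retraction property $\kappa_* K_* = \mathrm{id}$ evaluated on the complementary set to force all the mass onto $\kappa^{-1}(A')$, finishing by linearity and density of step functions. The only cosmetic difference is that you phrase the key step via a Radon--Nikodym density $h_{A'}$ pinned to $\chi_{\kappa^{-1}(A')}$, whereas the paper manipulates the measures $\mu_1, \mu_2$ directly and shows $\mu_1(B) = \mu(B\cap\kappa^{-1}(A'))$ without invoking Radon--Nikodym.
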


\begin{proof}
We have to show that $K_*(\phi' \mu') = \kappa^*\phi' \mu$ for all
$\phi' \in L^1(\Om', \mu')$. By continuity, it suffices to show this
for step functions, as these are dense in $L^1(\Om', \mu')$, whence
by linearity, we have to show that for all $A' \subset\Om'$, $A :=
\kappa^{-1}(A') \subset\Om$\vspace*{-1pt}
%
\begin{equation}
\label{eq:K-pullback-step} K_*\bigl(\chi_{A'} \mu'\bigr) =
\chi_A \mu.
\end{equation}

Let $A'_1 := A'$ and $A'_2 = \Om' \backslash A'$, and let $A_i :=
\kappa^{-1}(A_i')$. We define the measures $\mu'_i := \chi_{A_i'}
\mu' \in\Mm(\Om')$, and $\mu_i := K_*\mu'_i \in\Mm(\Om)$.
Observe that the monotonicity of $K_*$ implies that $\mu_i$ are indeed
(nonnegative) measures. Since $\mu_1' + \mu_2' = \mu'$, it follows
that $\mu_1 + \mu_2 = \mu$ by the linearity of $K_*$.

Taking indices $\bmod 2$, and using $\kappa_*\mu_i = \kappa_*K_*\mu
_i' = \mu_i'$ by the $\kappa$-congruency of $K_*$, note that\vspace*{-1pt}
\[
\mu_i(A_{i+1}) = \mu_i\bigl(
\kappa^{-1}\bigl(A'_{i+1}\bigr)\bigr) = \kappa_*
\mu _i\bigl(A'_{i+1}\bigr) =
\mu'_i\bigl(A'_{i+1}\bigr) = 0.
\]
Thus, for any measurable $B \subset\Om$ we have\vspace*{-1pt}
\begin{eqnarray*}
\mu_1(B) & = & \mu_1(B \cap A_1) \qquad
\mbox{since } \mu_1(B \cap A_2) \leq\mu_1(A_2)
= 0
\\
& = & \mu_1(B \cap A_1) + \mu_2(B \cap
A_1) \qquad \mbox{since } \mu_2(B \cap A_1)
\leq\mu_2(A_1)= 0
\\
& = & \mu(B \cap A_1)\qquad \mbox{since } \mu= \mu_1 +
\mu_2
\\
& = & (\chi_A \mu) (B) \qquad \mbox{since } A_1 = A.
\end{eqnarray*}
That is, $\chi_A \mu= \mu_1 = K_*\mu_1' = K_*(\chi_{A'} \mu')$,
so that (\ref{eq:K-pullback-step}) follows.
\end{proof}

\subsection{Markov kernels and Markov morphisms}

\begin{definition}[Markov kernel and Markov morphism,
cf. \cite{AJLS,Chentsov1965,MS1966}] \label{def:markov}
A \emph{Markov kernel} between two measurable spaces $\Omega$ and
$\Omega'$ is a map $K: \Om\to\Pp(\Om')$, associating to each
${\omega}\in\Om$ a probability measure on $\Om'$ such that for each
fixed measurable $A' \subset\Om'$ the map\vspace*{-1pt}
\[
\Om\longrightarrow[0,1],\qquad {\omega}\longmapsto K\bigl({\omega};A'
\bigr) := K({\omega}) \bigl(A'\bigr)
\]
is measurable. The \emph{Markov morphism induced by $K$} is the linear map\vspace*{-1pt}
%
\begin{equation}
\label{eq:Markov-linear} K_*: \Ss(\Om) \longrightarrow\Ss(\Om),\qquad K_*\mu
\bigl(A'\bigr) :=
\int _{\Om} K\bigl({\omega}; A'\bigr) \, d\mu({
\omega}).
\end{equation}
\end{definition}

We shall use the notation $K_*(\mu;A') := K_*\mu(A')$. Since
$K({\omega}) \in\Pp(\Om')$, it follows that $K({\omega}; \Om') =
1$ and hence (\ref{eq:Markov-linear}) implies that $K_*\mu(\Om') =
\mu(\Om)$. Thus,\vspace*{-1pt}
%
\begin{equation}
\label{eq:markov-isometry-M} \|K_*\mu\|_{\mathrm{TV}} = \|\mu\|
_{\mathrm{TV}}\qquad \mbox{ for all $\mu\in\Mm (\Om)$}.
\end{equation}

In particular, a Markov morphism maps probability measures to
probability measures. For a general measure $\mu\in\Ss(\Om)$, (\ref
{Jordan-dec2}) implies that $|K_*(\mu; A')| \leq K_*(|\mu|; A')$ for
all $A'$ and hence,\vspace*{-1pt}
\[
\|K_*\mu\|_{\mathrm{TV}}\leq\bigl\|K_*\|\mu\|
\bigr\|_{\mathrm{TV}} = \|\mu\|_{\mathrm{TV}} \qquad \mbox
{for all }\mu\in\Ss(\Om),
\]
so that $K_*: \Ss(\Om) \to\Ss(\Om')$ is a bounded linear map.\vadjust{\goodbreak}

Observe that we can recover the Markov kernel $K$ from $K_*$ using the relation
\begin{equation*}
K({\omega}) = K_*\delta^{{\omega}}\qquad \mbox{for all } {\omega }\in\Om,
\end{equation*}
where $\delta^{{\omega}}$ denotes the Dirac measure supported at
${\omega}\in\Om$.

\begin{remark}\label{rem3.1}
From (\ref{eq:Markov-linear}) it is immediate that $K_*$ preserves
dominance of measures, i.e., if $\mu$ dominates $\tilde\mu$, then
$K_*\mu$ dominates $K_*\tilde\mu$. Thus, for each $\mu\in\Mm(\Om
)$ there is a restriction
\begin{equation*}
K_*: \Ss(\Om, \mu) \longrightarrow\Ss\bigl(\Om',
\mu'\bigr),
\end{equation*}
where $\mu' := K_*\mu$. This again induces a bounded linear map
%
\begin{equation}
\label{eq:defK*mu} K_*^\mu: L^1(\Om, \mu) \longrightarrow
L^1\bigl(\Om', \mu'\bigr),\qquad \phi
\longmapsto\phi',
\end{equation}
where $\phi'$ is given by
%
\begin{equation}
\label{eq:cond-K} K_*(\phi\mu) = \phi' \mu',
\end{equation}
and as for statistics, $\phi'$ is called the \emph{conditional
expectation of} $\phi$ \emph{given} $K$, cf. (\ref{eq:cond-exp}).
\end{remark}

\begin{definition}[Composition of Markov kernels]\label{def3.3}
Let $\Om_i$, $i = 1,2,3$ be measurable spaces, and let $K_i: \Om_i
\to\Pp(\Om_{i+1})$ for $i = 1,2$ be Markov kernels. The \emph{composition of} $K_1$ \emph{and} $K_2$ is the Markov kernel
\[
K_2 K_1: \Om_1 \longrightarrow\Pp({
\Om_3}), \qquad {\omega }\longmapsto(K_2)_*
\bigl(K_1({\omega})\bigr).
\]
\end{definition}

Since $\|(K_2)_*(K_1({\omega}))\|_{\mathrm{TV}}= \|K_1({\omega})\|_{\mathrm{TV}} = 1$
by (\ref{eq:markov-isometry-M}), $(K_2)_*(K_1({\omega}))$ is a
probability measure, hence this composition yields indeed a Markov
kernel. Moreover, it is straightforward to verify that this composition
is associative, and for the induced Markov morphism we have
%
\begin{equation}
\label{eq:Markov-chainrule} (K_2 K_1)_* = (K_2)_*
(K_1)_*.
\end{equation}

Markov kernels are generalizations of statistics. In fact, a statistic
$\kappa: \Om\to\Om'$ induces a Markov kernel by
\begin{equation*}
K^\kappa({\omega}) := \delta^{\kappa({\omega})}, \quad \mbox{so that} \quad
K^\kappa\bigl({\omega}; A'\bigr) := \chi_{\kappa
^{-1}(A')}({
\omega}).
\end{equation*}
In this case, the Markov morphism induced by $K^\kappa$ is the map
$\kappa_*: \Ss(\Om) \to\Ss(\Om')$ from (\ref
{eq:push-fwd-kappa}). We shall write the Markov kernel $K^\kappa$ also
as $\kappa$ if there is no danger of confusion.

\begin{definition}[Congruent Markov kernels]\label{def3.4}
A Markov kernel $K: \Om' \to\Pp(\Om)$ is called $\kappa
$-\emph{congruent} for a statistic $\kappa: \Om\to\Om'$ if
%
\begin{equation}
\label{eq:Cong-Mar-ker} \kappa_*K\bigl({\omega}'\bigr) =
\delta^{{\omega}'} \qquad \mbox{for all } {\omega}' \in
\Om',
\end{equation}
or, equivalently,
\[
\bigl(K^\kappa K\bigr)_* = \mathrm{Id}_{\Ss(\Om')}: \Ss\bigl(
\Om'\bigr) \longrightarrow\Ss\bigl(\Om'\bigr).
\]
In this case, we also call the induced Markov morphism $K_*: \Ss(\Om
') \to\Ss(\Om)$ $\kappa$-congruent.\vadjust{\goodbreak}
\end{definition}

In order to relate the notions of $\kappa$-congruent Markov morphism
and $\kappa$-congruent embeddings from Definition~\ref{def:cong-emb},
we need the notion of $\kappa$-transverse measures.

\begin{definition}[Transverse measures]\label{def3.5}
Let $\kappa: \Om\to\Om'$ be a statistic. A measure $\mu\in\Mm
(\Om)$ is said to admit \emph{$\kappa$-transverse measures} if there
are measures $\mu_{{\omega}'}^\perp$ on $\kappa^{-1}({\omega}')$
such that for all $\phi\in L^1(\Om, \mu)$
%
\begin{equation}
\label{eq:mu-transverse}
\int_\Om\phi \, d\mu=
\int_{\Om'} \biggl(
\int_{\kappa
^{-1}({\omega}')} \phi\, d\mu_{{\omega}'}^\perp \biggr)\,
d\mu '\bigl({\omega}'\bigr),
\end{equation}
where $\mu' := \kappa_*\mu$. In particular, the function
\[
\Om' \longrightarrow\hat\R, \qquad {\omega}'
\longmapsto
\int _{\kappa^{-1}({\omega}')} \phi\,  d\mu_{{\omega}'}^\perp
\]
is measurable for all $\phi\in L^1(\Om, \mu)$.
\end{definition}

Observe that the choice of $\kappa$-transverse measures $\mu_{{\omega
}'}^\perp$ is not unique, but rather, one can change these measures
for all ${\omega}'$ in a $\mu'$-null set.

\begin{proposition} \label{prop:transverse-probab}
Let $\kappa: \Om\to\Om'$ be a statistic and $\mu\in\Mm(\Om)$ a
measure which admits $\kappa$-transverse measures $\{\mu_{{\omega
}'}^\perp :  {\omega}' \in\Om'\}$. Then $\mu_{{\omega}'}^\perp
$ is a probability measure for almost every ${\omega}' \in\Om'$ and
hence, we may assume w.l.o.g. that $\mu_{{\omega}'}^\perp\in\Pp
(\kappa^{-1}({\omega}'))$ for \emph{all} ${\omega}' \in\Om'$.
\end{proposition}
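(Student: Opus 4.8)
The plan is to probe the defining identity \eqref{eq:mu-transverse} with test functions that are pulled back from $\Om'$. For a measurable $\psi'$ on $\Om'$ the function $\phi := \kappa^*\psi' = \psi'\circ\kappa$ is constant, equal to $\psi'({\omega}')$, on each fibre $\kappa^{-1}({\omega}')$; hence the inner integral in \eqref{eq:mu-transverse} collapses to
\[
\int_{\kappa^{-1}({\omega}')}\kappa^*\psi'\,d\mu_{{\omega}'}^\perp = \psi'({\omega}')\,g({\omega}'),\qquad g({\omega}') := \mu_{{\omega}'}^\perp\bigl(\kappa^{-1}({\omega}')\bigr).
\]
Thus everything reduces to showing that the fibre mass $g$ equals $1$ for $\mu'$-almost every ${\omega}'$, where $\mu' := \kappa_*\mu$.

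First I would check that $\phi = \kappa^*\psi' \in L^1(\Om,\mu)$ precisely when $\psi' \in L^1(\Om',\mu')$, since $\int_\Om|\kappa^*\psi'|\,d\mu = \int_{\Om'}|\psi'|\,d\mu'$; in particular, taking $\psi' \equiv 1$ (which lies in $L^1(\Om',\mu')$ because $\mu$, and hence $\mu'$, is finite) shows via Definition~\ref{def3.5} that $g$ is measurable, and \eqref{eq:mu-transverse} gives $\int_{\Om'} g\,d\mu' = \mu(\Om) < \infty$, so that $g \in L^1(\Om',\mu')$ and $g$ is finite $\mu'$-a.e. Next, combining the collapsed fibre integral with the push-forward identity $\int_\Om \kappa^*\psi'\,d\mu = \int_{\Om'}\psi'\,d\mu'$ (immediate from \eqref{eq:pushforward-mult} evaluated on all of $\Om'$), the transverse identity \eqref{eq:mu-transverse} becomes
\[
\int_{\Om'}\psi'\,d\mu' = \int_{\Om'}\psi'\,g\,d\mu' \qquad\text{for all } \psi' \in L^1\bigl(\Om',\mu'\bigr).
\]
Taking $\psi' = \chi_{B}$ for arbitrary measurable $B \subset \Om'$ yields $\int_B(1-g)\,d\mu' = 0$ for every $B$, whence $g = 1$ $\mu'$-a.e. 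This proves that $\mu_{{\omega}'}^\perp$ is a probability measure for almost every ${\omega}'$.

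For the final normalization I would invoke the remark preceding the proposition that the transverse measures may be altered on any $\mu'$-null set without affecting \eqref{eq:mu-transverse}. On the measurable null set $\{g \neq 1\}$ --- which contains, in particular, the ${\omega}'$ with empty fibre, since these carry only the zero measure and so lie in $\{g=0\}$ --- I would replace $\mu_{{\omega}'}^\perp$ by a fixed probability measure on $\kappa^{-1}({\omega}')$ whenever the fibre is nonempty, leaving the identity intact because the outer integration against $\mu'$ ignores this null set. The main point requiring care is precisely this last modification: one must ensure the redefined assignment ${\omega}'\mapsto \mu_{{\omega}'}^\perp$ still yields a measurable fibre integral, which holds because it was changed only on a null set, and one should tacitly discard (or assume away by surjectivity of $\kappa$) the measure-zero set of empty fibres on which no probability measure exists. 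Everything else is a direct computation.
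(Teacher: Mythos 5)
Your proof is correct and follows essentially the same route as the paper: both arguments probe the transversality identity \eqref{eq:mu-transverse} with functions pulled back from $\Om'$ (i.e., constant on fibres), so that the inner integral collapses to $\psi'({\omega}')\,g({\omega}')$ with $g({\omega}')=\mu_{{\omega}'}^\perp(\kappa^{-1}({\omega}'))$. The only differences are cosmetic: the paper tests against $\chi_{\kappa^{-1}(A'_\eps)}$ for the level sets $A'_\eps=\{g\geq 1+\eps\}$ (and analogously $\{g\leq 1-\eps\}$) to obtain $\mu'(A'_\eps)\geq(1+\eps)\mu'(A'_\eps)$, whereas you derive $\int_B(1-g)\,d\mu'=0$ for every measurable $B$ and invoke the standard a.e.\ vanishing lemma; and your normalization step, which explicitly flags empty fibres and the set $\{g\neq 1\}$, is if anything slightly more careful than the paper's rescaling $\|\mu_{{\omega}'}^\perp\|_{\mathrm{TV}}^{-1}\mu_{{\omega}'}^\perp$, which is undefined where $\mu_{{\omega}'}^\perp=0$.
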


\begin{proof}
Given $\eps> 0$, define $A_\eps' := \{{\omega}' \in\Om' :   \mu
_{{\omega}'}^\perp(\kappa^{-1}({\omega}')) \geq1+\eps\}$. Then
for $\phi:= \chi_{\kappa^{-1}(A_\eps')}$ the two sides of equation
(\ref{eq:mu-transverse}) read
\begin{eqnarray*}
\int_\Om\chi_{\kappa^{-1}(A_\eps')} \, d\mu& = & \mu\bigl(\kappa
^{-1}\bigl(A_\eps'\bigr)\bigr) =
\mu'\bigl(A_\eps'\bigr)
\\
\int_{\Om'} \biggl(
\int_{\kappa^{-1}({\omega}')} \chi_{\kappa
^{-1}(A_\eps)} \, d\mu_{{\omega}'}^\perp
\biggr) \, d\mu'\bigl({\omega }'\bigr) & = &
\int_{A'_\eps} \biggl(
\int_{\kappa^{-1}(\omega')} d\mu _{{\omega}'}^\perp \biggr)\,  d
\mu'\bigl({\omega}'\bigr)
\\
& = &
\int_{A'_\eps} \mu_{{\omega}'}^\perp\bigl(
\kappa^{-1}\bigl(\omega '\bigr)\bigr) \, d\mu'
\bigl({\omega}'\bigr)
\\
& \geq& (1 + \eps) \mu'\bigl(A_\eps'
\bigr).
\end{eqnarray*}
Thus, (\ref{eq:mu-transverse}) implies
\[
\mu'\bigl(A_\eps'\bigr) \geq(1 + \eps)
\mu'\bigl(A_\eps'\bigr),
\]
and hence, $\mu'(A_\eps') = 0$ for all $\eps> 0$. Thus,
\[
\mu'\bigl(\bigl\{{\omega}' \in\Om' :
\mu_{{\omega}'}^\perp\bigl(\kappa ^{-1}\bigl({
\omega}'\bigr)\bigr) > 1\bigr\}\bigr) = \mu' \Biggl(
\bigcup_{n=1}^\infty A_{1/n}'
\Biggr) \leq\sum_{n=1}^\infty
\mu'\bigl(A_{1/n}'\bigr) = 0,
\]
whence $\{{\omega}' \in\Om' :   \mu_{{\omega}'}^\perp(\kappa
^{-1}({\omega}')) > 1\}$ is a $\mu'$-null set. Analogously, $\{
{\omega}' \in\Om' :   \mu_{{\omega}'}^\perp(\kappa
^{-1}({\omega}')) < 1\}$ is a $\mu'$-null set, that is, $\mu
_{{\omega}'}^\perp\in\Pp(\kappa^{-1}({\omega}'))$ and hence $\|
\mu_{{\omega}'}^\perp\|_{\mathrm{TV}} = 1$ for $\mu'$-a.e. $\mu' \in\Om'$.

Thus, if we replace $\mu_{{\omega}'}^\perp$ by $\tilde\mu_{{\omega
}'}^\perp:= \|\mu_{{\omega}'}^\perp\|_{\mathrm{TV}}^{-1} \mu_{{\omega
}'}^\perp$, then $\tilde\mu_{{\omega}'}^\perp\in\Pp(\kappa
^{-1}({\omega}'))$ for \emph{all} ${\omega}' \in\Om'$, and since
$\tilde\mu_{{\omega}'}^\perp= \mu_{{\omega}'}^\perp$ for $\mu
'$-a.e. ${\omega}' \in\Om'$, it follows that (\ref
{eq:mu-transverse}) holds when replacing $\mu_{{\omega}'}^\perp$ by
$\tilde\mu_{{\omega}'}^\perp$.
\end{proof}

We are now ready to relate the notions of $\kappa$-congruent
embeddings and $\kappa$-congruent Markov kernels.

\begin{theorem} \label{Markov-congruent}
Let $\kappa: \Om\to\Om'$ be a statistic and $\mu' \in\Mm(\Om')$
be a measure.
\begin{enumerate}
\item[(1)]
If $K: \Om' \to\Pp(\Om)$ is a $\kappa$-congruent Markov kernel,
then the restriction of
$K_*$ to $\Ss(\Om', \mu') \subset\Ss(\Om')$ is a $\kappa
$-congruent embedding and hence, for $\phi' \in L^1(\Om', \mu')$ we have
\begin{equation*}
K_*\bigl(\phi' \mu'\bigr) = \kappa^*
\phi' K_*\mu', \quad \mbox{or, equivalently}, \quad
K_*^{\mu'}\bigl(\phi'\bigr) = \kappa^*\phi'.
\end{equation*}
\item[(2)]
Conversely, if $K_*: \Ss(\Om', \mu') \to\Ss(\Om)$ is a $\kappa
$-congruent embedding, then the following are equivalent.
\begin{enumerate}
\item[(a)]
$K_*$ is the restriction of a $\kappa$-congruent Markov morphism to
$\Ss(\Om', \mu') \subset\Ss(\Om')$.
\item[(b)]
$\mu:= K_*\mu' \in\Ss(\Om)$ admits $\kappa$-transverse measures.
\end{enumerate}
\end{enumerate}
\end{theorem}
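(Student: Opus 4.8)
The plan is to prove the three implications separately, relying throughout on Proposition~\ref{prop:cong-emb} to identify any $\kappa$-congruent embedding $K_*$ with the canonical map $K_\mu$ of \eqref{congruent-mu}, where $\mu := K_*\mu'$; thus $K_*(\phi'\mu') = \kappa^*\phi'\,\mu$ for all $\phi' \in L^1(\Om',\mu')$.

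For (1) I would simply check the two defining properties of a $\kappa$-congruent embedding for the restriction of the Markov morphism $K_*$ to $\Ss(\Om',\mu')$. Boundedness and linearity are already established in Section~\ref{sec:congruent}, and monotonicity is immediate from \eqref{eq:Markov-linear}, since each $K({\omega}') \in \Pp(\Om)$ is a nonnegative measure. The congruency relation $\kappa_*K_*(\nu') = \nu'$ is exactly the identity $(K^\kappa K)_* = \mathrm{Id}$ defining $\kappa$-congruency in Definition~\ref{def3.4}, once one writes $\kappa_* = (K^\kappa)_*$ and uses the chain rule \eqref{eq:Markov-chainrule}. The displayed formula is then read off from Proposition~\ref{prop:cong-emb}.

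The substance of the theorem is the equivalence in (2), and the key observation is that $\kappa$-congruency of a Markov kernel $K:\Om' \to \Pp(\Om)$ is equivalent to each $K({\omega}')$ being concentrated on the fibre $\kappa^{-1}({\omega}')$: by \eqref{eq:Cong-Mar-ker}, $K({\omega}')\big(\kappa^{-1}(A')\big) = \chi_{A'}({\omega}')$, so the complement of the fibre is a $K({\omega}')$-null set. For (a)$\Rightarrow$(b) I would take the $\kappa$-congruent Markov kernel $K$ whose morphism extends $K_*$ and set $\mu_{{\omega}'}^\perp := K({\omega}')$, viewed as a probability measure on $\kappa^{-1}({\omega}')$. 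The disintegration identity \eqref{eq:mu-transverse} for $\mu = K_*\mu'$ then holds for $\phi = \chi_A$ directly from \eqref{eq:Markov-linear} and extends to all $\phi \in L^1(\Om,\mu)$ by linearity and monotone approximation; the measurability of ${\omega}' \mapsto \int \phi\,d\mu_{{\omega}'}^\perp$ required in Definition~\ref{def3.5} is precisely the measurability built into the notion of a Markov kernel. For the converse (b)$\Rightarrow$(a) I would run this in reverse: after using Proposition~\ref{prop:transverse-probab} to arrange $\mu_{{\omega}'}^\perp \in \Pp(\kappa^{-1}({\omega}'))$ for \emph{every} ${\omega}'$, I define $K({\omega}') := \mu_{{\omega}'}^\perp$ as a measure on $\Om$. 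The measurability clause in Definition~\ref{def3.5} applied to $\phi = \chi_A$ makes $K$ a genuine Markov kernel, the fibre-support observation yields $\kappa$-congruency, and to see that the induced morphism restricts to $K_*$ on $\Ss(\Om',\mu')$ I would compute $K_*(\phi'\mu')(A)$ from \eqref{eq:Markov-linear} and compare it with $K_\mu(\phi'\mu')(A) = \int_A \kappa^*\phi'\,d\mu$, disintegrating the latter via \eqref{eq:mu-transverse} and using that $\kappa^*\phi' = \phi'\circ\kappa$ takes the constant value $\phi'({\omega}')$ on the fibre $\kappa^{-1}({\omega}')$.

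The hard part is not any single deep step but the measure-theoretic bookkeeping that makes the correspondence ``$\kappa$-congruent Markov kernel $\leftrightarrow$ measurable family of fibre probability measures $\leftrightarrow$ transverse measures'' precise: one must keep track of the fibrewise supports so that $\mu_{{\omega}'}^\perp(A) = \mu_{{\omega}'}^\perp(A \cap \kappa^{-1}({\omega}'))$, ensure the ${\omega}'$-dependent integrals are measurable, and justify the passage from indicator functions to general $L^1$ integrands. Once this identification is in place, both directions of (2) reduce to the same computation carried out in opposite orders.
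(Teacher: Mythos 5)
Your proposal is correct and follows essentially the same route as the paper: part (1) via the chain rule $(K^\kappa K)_*=\kappa_*K_*$ and Proposition~\ref{prop:cong-emb}, and part (2) via the observation that $\kappa$-congruency forces each $K({\omega}')$ to be supported on the fibre $\kappa^{-1}({\omega}')$, so that restricting/normalizing these fibre measures (using Proposition~\ref{prop:transverse-probab}) gives exactly the dictionary between congruent Markov kernels and $\kappa$-transverse measures, with the identity checked on indicators and extended by linearity and density in $L^1$. The measure-theoretic bookkeeping you flag is precisely what the paper's proof carries out.
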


Theorem~\ref{Markov-congruent} implies that the two notions of
congruency, that is, congruent embeddings and congruent Markov
morphisms, are equivalent for large classes of statistics $\kappa$,
since the existence of transversal measures is guaranteed under rather
mild hypotheses, for example, if one of $\Om$, $\Om'$ is a finite set,
or if $\Om$, $\Om'$ are differentiable manifolds equipped with a Borel
measure $\mu$ and $\kappa$ is a differentiable map.

However, there \emph{are} examples of statistics and measures which do
\emph{not} admit $\kappa$-transverse measures, cf. Example~\ref
{ex:notrans} below.

\begin{proof}[Proof of Theorem~\ref{Markov-congruent}]
The first statement follows directly from $(K^\kappa K)_* = (K^\kappa
)_* K_* = \kappa_* K_*$ by (\ref{eq:Markov-chainrule}) and
Proposition~\ref{prop:cong-emb}.

For the second, suppose that $K_*: \Ss(\Om', \mu') \to\Ss(\Om)$
is a $\kappa$-congruent embedding. Then $K_* = K_\mu$ given in (\ref
{congruent-mu}) for the measure $\mu:= K_*\mu'$ by Proposition~\ref
{prop:cong-emb}.

If we assume that $K_*$ is the restriction of a $\kappa$-congruent
Markov morphism induced by the $\kappa$-congruent Markov kernel $K:
\Om' \to\Pp(\Om)$, then we define the measures
\[
\mu_{{\omega}'}^\perp:= K\bigl({\omega}'
\bigr)\big|_{\kappa^{-1}({\omega})} \in \Mm\bigl(\kappa^{-1}\bigl({
\omega}'\bigr)\bigr).
\]
Note that for ${\omega}' \in\Om'$
\begin{eqnarray*}
K\bigl({\omega}'; \Om\backslash\kappa^{-1}\bigl({
\omega}'\bigr)\bigr) & = &
\int_{\Om
\backslash\kappa^{-1}({\omega}')} dK\bigl({\omega}'\bigr) =
\int_{\Om'
\backslash{\omega}'} d\kappa_*K\bigl({\omega}'\bigr)
\\
& \stackrel{\text{(\ref{eq:Cong-Mar-ker})}}= &
\int_{\Om' \backslash
{\omega}'} d\delta^{{\omega}'} = 0.
\end{eqnarray*}
That is, $K({\omega}')$ is supported on $\kappa^{-1}({\omega}')$ and
hence, for an arbitrary set $A \subset\Om$ we have
\[
K\bigl({\omega}'; A\bigr) = K\bigl({\omega}'; A \cap
\kappa^{-1}\bigl({\omega}'\bigr)\bigr) = \mu
_{{\omega}'}^\perp\bigl(A \cap\kappa^{-1}\bigl({
\omega}'\bigr)\bigr) =
\int_{\kappa
^{-1}({\omega}')} \chi_A \, d\mu_{{\omega}'}^\perp.
\]
Substituting this into the definition of $K_*$ we obtain for a subset
$A \subset\Om$
\begin{eqnarray*}
\int_\Om\chi_A \,d\mu& = & \mu(A) = K_*\bigl(
\mu'; A\bigr) \stackrel{(\ref
{eq:Markov-linear})}=
\int_{\Om'} K\bigl({\omega}';A\bigr) \,d
\mu'\bigl({\omega }'\bigr)
\\
& = &
\int_{\Om'} \biggl(
\int_{\kappa^{-1}({\omega}')} \chi_A \,d\mu_{{\omega}'}^\perp
\biggr) \,d\mu'\bigl({\omega}'\bigr),
\end{eqnarray*}
showing that (\ref{eq:mu-transverse}) holds for $\phi= \chi_A$. But
then, by linearity (\ref{eq:mu-transverse}) holds for any step
function $\phi$, and since these are dense in $L^1(\Om, \mu)$, it
follows that (\ref{eq:mu-transverse}) holds for all $\phi$, so that
the measures $\mu_{{\omega}'}^\perp$ defined above yield indeed
$\kappa$-transverse measures of $\mu$.

Conversely, suppose that $\mu:= K_*\mu'$ admits $\kappa$-transverse
measures $\mu_{{\omega}'}^\perp$, and by Proposition~\ref
{prop:transverse-probab} we may assume w.l.o.g. that $\mu_{{\omega
}'}^\perp\in\Pp(\kappa^{-1}({\omega}'))$. Then we define the map
\[
K: \Om' \longrightarrow\Pp(\Om), \qquad K\bigl({
\omega}'; A\bigr) := \mu _{{\omega}'}^\perp\bigl(A \cap
\kappa^{-1}\bigl({\omega}'\bigr)\bigr) =
\int_{\kappa
^{-1}({\omega}')} \chi_A\, d\mu_{{\omega}'}^\perp.
\]
Since for fixed $A \subset\Om$ the map ${\omega}' \mapsto\int
_{\kappa^{-1}({\omega}')} \chi_A \,  d\mu_{{\omega}'}^\perp$ is
measurable by the definition of transversal measures, $K$ is indeed a
Markov kernel. Moreover, for $A' \subset\Om'$
\[
\kappa_*K\bigl({\omega}'\bigr) \bigl(A'\bigr) = K
\bigl({\omega}'; \kappa^{-1}\bigl(A'\bigr)
\bigr) = \mu _{{\omega}'}^\perp\bigl(\kappa^{-1}
\bigl(A'\bigr) \cap\kappa^{-1}\bigl({\omega}'
\bigr)\bigr) = \delta^{{\omega}'}\bigl(A'\bigr),
\]
so that $\kappa_*K({\omega}') = \delta^{{\omega}'}$ for all
${\omega}' \in\Om'$, whence $K$ is $\kappa$-congruent. Moreover,
for any $\phi' \in L^1(\Om', \mu')$ and $A \subset\Om$ we have
\begin{eqnarray*}
K_\mu\bigl(\phi' \mu'\bigr) (A) &
\stackrel{\text{(\ref{congruent-mu})}}= & \kappa ^*\phi' \mu(A) =
\int_\Om\chi_A \kappa^*\phi' \,d
\mu
\\
& \stackrel{\text{(\ref{eq:mu-transverse})}}= &
\int_{\Om'} \biggl(
\int _{\kappa^{-1}({\omega}')} \chi_A \kappa^*\phi' \,d
\mu_{{\omega
}'}^\perp \biggr) \,d\mu'\bigl({
\omega}'\bigr)
\\
& = &
\int_{\Om'} \biggl(
\int_{\kappa^{-1}({\omega}')} \chi_A \,d\mu_{{\omega}'}^\perp
\biggr) \phi'\bigl({\omega}'\bigr) \,d
\mu'\bigl({\omega }'\bigr)
\\
& = &
\int_{\Om'} K\bigl({\omega}';A\bigr) \,d\bigl(
\phi' \mu'\bigr) \bigl({\omega}'\bigr)
\stackrel{\text{(\ref{eq:Markov-linear})}}= K_*\bigl(\phi'
\mu'\bigr) (A).
\end{eqnarray*}

Thus, $K_\mu(\phi' \mu') = K_*(\phi' \mu')$ for all $\phi' \in
L^1(\Om', \mu')$ and hence, $K_\mu(\nu) = K_*\nu$ for all $\nu\in
\Ss(\Om', \mu')$. That is, the given congruent embedding $K_\mu$
coincides with the Markov morphism $K_*$ induced by $K$, and this
completes the proof.
\end{proof}

Now we give an example of a statistic which does not admit $\kappa
$-transverse measures.

\begin{example} \label{ex:notrans}
Let $\Om:= S^1$ be the unit circle group in the complex plain with the
$1$-dimensional Borel algebra ${\mathfrak B}$. Let $\Gamma:= \exp
(2\pi\sqrt{-1} {\mathbb Q}) \subset S^1$ be the subgroup of rational
rotations, and let $\Om' := S^1/\Gamma$ be the quotient space with
the canonical projection $\kappa: \Om\to\Om'$. Let ${\mathfrak B}'
:= \{A' \subset\Om'  :  \kappa^{-1}(A') \in{\mathfrak B}\}$, so
that $\kappa: \Om\to\Om'$ is measurable. For $\gamma\in\Gamma$,
we let $m_\gamma: S^1 \to S^1$ denote the multiplication by $\gamma$.

Let $\lambda$ be the $1$-dimensional Lebesgue measure on $\Om$ and
$\lambda' := \kappa_*\lambda$ be the induced measure on~$\Om'$.
Suppose that $\lambda$ admits $\kappa$-transverse measures $(\lambda
_{{\omega}'}^\perp)_{{\omega}' \in\Om'}$. Then for each $A \in
{\mathfrak B}$ we have
%
\begin{equation}
\label{eq:lambda} \lambda(A) =
\int_{\Om'} \biggl(
\int_{A \cap\kappa^{-1}({\omega
}')} d\lambda_{{\omega}'}^\perp \biggr)
\,d\lambda'\bigl({\omega}'\bigr).
\end{equation}
Since $\lambda$ is invariant under rotations, we have on the other
hand for $\gamma\in\Gamma$
%
\begin{eqnarray}
\label{eq:lambda2} \begin{aligned} \lambda(A) &= \lambda\bigl(m_\gamma^{-1}
A\bigr) =
\int_{\Om'} \biggl(
\int_{(m_\gamma^{-1} A) \cap\kappa^{-1}({\omega}')} d\lambda_{{\omega}'}^\perp \biggr)
\,d\lambda'\bigl({\omega}'\bigr)
\\
&=
\int_{\Om'} \biggl(
\int_{A \cap\kappa
^{-1}({\omega}')} d\bigl((m_\gamma)_*\lambda_{{\omega}'}^\perp
\bigr) \biggr) \,d\lambda'\bigl({\omega}'\bigr).
\end{aligned}
\end{eqnarray}
Comparing (\ref{eq:lambda}) and (\ref{eq:lambda2}) implies that
$((m_\gamma)_* \lambda_{{\omega}'}^\perp)_{{\omega}' \in\Om'}$
is another family of $\kappa$-transverse measures of $\lambda$ which
implies that $(m_\gamma)_*\lambda_{{\omega}'}^\perp= \lambda
_{{\omega}'}^\perp$ for $\lambda'$-a.e. ${\omega}' \in\Om'$, and
as $\Gamma$ is countable, it follows that
\begin{equation*}
(m_\gamma)_*\lambda_{{\omega}'}^\perp=
\lambda_{{\omega}'}^\perp \qquad \mbox{for all }\gamma\in\Gamma\mbox{
and }\lambda'\mbox{-a.e. } {\omega}' \in
\Om'.
\end{equation*}
Thus, for a.e. ${\omega}' \in\Om'$ we have $\lambda_{{\omega
}'}^\perp(\{\gamma\cdot x\}) = \lambda_{{\omega}'}^\perp(\{x\})$,
and since $\Gamma$ acts transitively on $\kappa^{-1}({\omega}')$, it
follows that singleton subsets have equal measure, i.e., there is a
constant $c_{{\omega}'}$ with
\[
\lambda_{{\omega}'}^\perp\bigl(A'\bigr) =
c_{{\omega}'} \bigl\|A'\bigr\|
\]
for all $A' \subset\kappa^{-1}({\omega}')$. As $\kappa^{-1}({\omega
}')$ is countable infinite, this implies that $\lambda_{{\omega
}'}^\perp= 0$ if $c_{{\omega}'} = 0$, and $\lambda_{{\omega
}'}^\perp(\kappa^{-1}({\omega}')) = \infty$ if $c_{{\omega}'} >
0$. Thus, $\lambda_{{\omega}'}^\perp$ is not a probability measure
for a.e. ${\omega}' \in\Om'$, contradicting Proposition~\ref
{prop:transverse-probab}. This shows that $\lambda$ does not admit
$\kappa$-transverse measures.
\end{example}

We conclude this section by the following result (cf. \cite{AJLS}, Theorem~4.10).

\begin{theorem} \label{thm:decomp-Markov}
Any Markov kernel $K= \Om\to\Pp(\Om')$ can be decomposed into a
statistic and a congruent Markov kernel. That is, there is a Markov
kernel $K^{\mathrm{cong}}: \Om\to\Pp(\hat\Om)$ which is congruent w.r.t.
some statistic $\kappa_1: \hat\Om\to\Om$, and a statistic $\kappa
_2: \hat\Om\to\Om'$ such that
\[
K = K^{\kappa_2} K^{\mathrm{cong}}.
\]
\end{theorem}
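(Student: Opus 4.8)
The plan is to realise $\hat\Om$ as the product space $\Om \times \Om'$ and to couple $K$ with the identity through the graph of the kernel. Concretely, I would set $\hat\Om := \Om \times \Om'$, equipped with the product sigma-algebra, and let $\kappa_1: \hat\Om \to \Om$ and $\kappa_2: \hat\Om \to \Om'$ be the two coordinate projections, which are measurable and hence statistics. I would then define the candidate congruent kernel by
\[
K^{\mathrm{cong}}: \Om \longrightarrow \Pp(\hat\Om), \qquad K^{\mathrm{cong}}(\omega) := \delta^{\omega} \otimes K(\omega),
\]
i.e. the product of the Dirac measure at $\omega$ on the first factor with the measure $K(\omega)$ on the second factor; equivalently, $K^{\mathrm{cong}}(\omega)$ is the push-forward of $K(\omega)$ under $\omega' \mapsto (\omega, \omega')$, so that it is supported on $\{\omega\} \times \Om' = \kappa_1^{-1}(\omega)$.

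First I would check that $K^{\mathrm{cong}}$ is a genuine Markov kernel. Each $K^{\mathrm{cong}}(\omega)$ is a probability measure on $\hat\Om$ since it is a product of probability measures. The point requiring care is the measurability of $\omega \mapsto K^{\mathrm{cong}}(\omega)(\hat A)$ for every measurable $\hat A \subset \hat\Om$. On a measurable rectangle $\hat A = B \times B'$ one has $K^{\mathrm{cong}}(\omega)(B \times B') = \chi_B(\omega)\, K(\omega; B')$, which is measurable because $K$ is a Markov kernel; a monotone class (Dynkin) argument then extends measurability from the $\pi$-system of rectangles generating $\Sigma \otimes \Sigma'$ to the whole product sigma-algebra, using countable additivity of $K^{\mathrm{cong}}(\omega)$ to verify the $\lambda$-system axioms. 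I expect this measurability step to be the main, though routine, technical obstacle.

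With $K^{\mathrm{cong}}$ in hand, the two remaining identities are direct computations from the push-forward formula (\ref{eq:push-forward}). For congruency w.r.t. $\kappa_1$ (Definition~\ref{def3.4}), I would compute, for $B \subset \Om$,
\[
(\kappa_1)_* K^{\mathrm{cong}}(\omega)(B) = K^{\mathrm{cong}}(\omega)\bigl(B \times \Om'\bigr) = \chi_B(\omega)\, K(\omega; \Om') = \chi_B(\omega) = \delta^{\omega}(B),
\]
so that $(\kappa_1)_* K^{\mathrm{cong}}(\omega) = \delta^{\omega}$ for all $\omega$, which is exactly condition (\ref{eq:Cong-Mar-ker}). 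For the factorization, recall from Definition~\ref{def3.3} that $K^{\kappa_2} K^{\mathrm{cong}}(\omega) = (\kappa_2)_*\bigl(K^{\mathrm{cong}}(\omega)\bigr)$; then for $B' \subset \Om'$,
\[
(\kappa_2)_* K^{\mathrm{cong}}(\omega)(B') = K^{\mathrm{cong}}(\omega)\bigl(\Om \times B'\bigr) = \delta^{\omega}(\Om)\, K(\omega; B') = K(\omega; B'),
\]
so $K^{\kappa_2} K^{\mathrm{cong}}(\omega) = K(\omega)$ for every $\omega$, i.e. $K = K^{\kappa_2} K^{\mathrm{cong}}$, completing the decomposition. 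The only substantive work is thus the measurability verification in the first step; congruency and factorization then follow by unwinding the definitions.
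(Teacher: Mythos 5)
Your construction is exactly the one the paper uses: $\hat\Om = \Om\times\Om'$, the two coordinate projections, and $K^{\mathrm{cong}}({\omega}) = \delta^{\omega}\times K({\omega})$, with the same two one-line computations for congruency and for the factorization. The only difference is that you spell out the measurability of ${\omega}\mapsto K^{\mathrm{cong}}({\omega})(\hat A)$ via a monotone class argument, which the paper leaves implicit; this is a correct and welcome addition but does not change the argument.
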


\begin{proof}
Let $\hat\Om:= \Om\times\Om'$ and let $\kappa_1: \hat\Om\to\Om
$ and $\kappa_2: \hat\Om\to\Om'$ be the canonical projections. We
define the Markov kernel
\[
K^{\mathrm{cong}}: \Om\longrightarrow\Pp(\hat\Om), \qquad K^{\mathrm{cong}}({
\omega }) := \delta^{\omega}\times K({\omega}),
\]
i.e., $K^{\mathrm{cong}}({\omega}; \hat A) := K({\omega}; \kappa_2(\hat A
\cap(\{{\omega}\} \times\Om')))$ for $\hat A \subset\hat\Om$.
Then evidently, $(\kappa_1)_*(K^{\mathrm{cong}}({\omega})) = \delta^{\omega
}$, so that $K^{\mathrm{cong}}$ is $\kappa_1$-congruent, and $(\kappa
_2)_*K^{\mathrm{cong}}({\omega}) = K({\omega})$, so the claim follows.
\end{proof}

\subsection{Powers of densities and congruent embeddings}\label{sec3.3}

As we saw in the preceding section, a Markov kernel $K: \Om\to\Pp(\Om')$
(e.g., a statistic $\kappa: \Om\to\Om'$), induces the monotone
bounded linear map $K_*: \Ss(\Om) \to\Ss(\Om')$ from (\ref
{eq:Markov-linear}) and for each $\mu\in\Mm(\Om)$ the restriction
yields a bounded linear map $K_*: \Ss(\Om, \mu) \to\Ss(\Om', \mu
')$, where $\mu' := K_*\mu\in\Mm(\Om')$. This induces the bounded
linear map $K_*^\mu: L^1(\Om, \mu) \to L^1(\Om', \mu')$ from (\ref
{eq:defK*mu}) (or in case of a statistic, the map $\kappa_*^\mu:
L^1(\Om, \mu) \to L^1(\Om', \mu')$ from (\ref{eq:cond-ex-L1}),
respectively).

We wish to show that when restricting this map to $L^k(\Om, \mu)
\subset L^1(\Om', \mu')$, the $k$-regularity is preserved by $\kappa
_*^\mu$ and $K_*^\mu$, respectively, cf. Theorem~\ref{thm:K-Lk}
below. The first step towards this is to consider congruent Markov kernels.

\begin{proposition} \label{prop:Kr-cong}
Let $K: \Om_1 \to\Pp(\Om_2)$ be a Markov kernel which is congruent
w.r.t. some statistic $\kappa: \Om_2 \to\Om_1$. Let $\mu_1 \in\Mm
(\Om_1)$ and $\mu_2:= K_*\mu_1 \in\Mm(\Om_2)$, and consider the
map $K_*^{\mu_1}: L^1(\Om_1, \mu_1) \to L^1(\Om_2, \mu_2)$.

Then for all $\phi\in L^k(\Om_1, \mu_1)$ with $1 \leq k \leq\infty
$ we have $\phi' := K_*^\mu(\phi) \in L^k(\Om_2, \mu_2)$, and
\[
\bigl\|\phi'\bigr\|_k = \|\phi
\|_k.
\]
\end{proposition}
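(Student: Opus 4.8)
The plan is to identify the conditional expectation $K_*^{\mu_1}(\phi)$ explicitly as a pullback, and then reduce the norm identity to the change-of-variables formula for a pushforward measure. The two ingredients I need are an explicit formula for $\phi'$ and the relation $\kappa_*\mu_2 = \mu_1$.

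First I would invoke Theorem~\ref{Markov-congruent}(1). Matching notation (with $\Om_2$ playing the role of $\Om$ and $\Om_1$ that of $\Om'$), the $\kappa$-congruency of $K$ means its restriction to $\Ss(\Om_1,\mu_1)$ is a $\kappa$-congruent embedding, and hence for every $\phi \in L^1(\Om_1,\mu_1)$ we have $K_*^{\mu_1}(\phi) = \kappa^*\phi = \phi \circ \kappa$. Since $\mu_1$ is finite, $L^k(\Om_1,\mu_1) \subset L^1(\Om_1,\mu_1)$ for all $k \geq 1$, so this identification applies to any $\phi \in L^k(\Om_1,\mu_1)$. Next, by Definition~\ref{def3.4} congruency gives $\kappa_* K_* = (K^\kappa K)_* = \mathrm{Id}$ on $\Ss(\Om_1)$, whence $\kappa_*\mu_2 = \kappa_* K_*\mu_1 = \mu_1$.

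With these in hand, for $1 \leq k < \infty$ I would compute, using $|\phi'|^k = \kappa^*(|\phi|^k)$,
\[
\|\phi'\|_k^k = \int_{\Om_2} \kappa^*\bigl(|\phi|^k\bigr)\,d\mu_2 = \int_{\Om_1} |\phi|^k \,d(\kappa_*\mu_2) = \int_{\Om_1} |\phi|^k \,d\mu_1 = \|\phi\|_k^k,
\]
where the middle equality is the standard identity $\int_{\Om_2}(g\circ\kappa)\,d\nu = \int_{\Om_1} g\,d(\kappa_*\nu)$, applied with $g = |\phi|^k$ and $\nu = \mu_2$. That identity is itself routine: it holds for indicator functions directly from the definition $\kappa_*\nu(A)=\nu(\kappa^{-1}A)$, and extends to all nonnegative measurable $g$ by linearity and monotone convergence. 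For $k=\infty$ I would argue with level sets: for each $M \geq 0$ one has $\{|\phi'|>M\} = \kappa^{-1}(\{|\phi|>M\})$, so $\mu_2(\{|\phi'|>M\}) = \kappa_*\mu_2(\{|\phi|>M\}) = \mu_1(\{|\phi|>M\})$, and therefore the two essential suprema coincide, giving $\|\phi'\|_\infty = \|\phi\|_\infty$.

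The computation is essentially mechanical once these pieces are assembled; the only point demanding genuine care is the translation between the notation of Theorem~\ref{Markov-congruent} and the present statement, since there the source and target sample spaces appear in the opposite roles. Getting this matching right is what allows the clean conclusions $\phi' = \kappa^*\phi$ and $\kappa_*\mu_2 = \mu_1$, and everything else is the pushforward change of variables. Thus I expect no substantial obstacle beyond bookkeeping.
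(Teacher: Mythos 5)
Your proposal is correct and follows essentially the same route as the paper: invoke Theorem~\ref{Markov-congruent}(1) to identify $\phi' = \kappa^*\phi$, use $\kappa_*\mu_2 = \mu_1$ from congruency, and conclude by the change-of-variables formula for pushforward measures (the paper's equation \eqref{eq:norm-kappaphi}). Your level-set argument for $k=\infty$ is a slightly more explicit version of what the paper dismisses as obvious, and your bookkeeping of the reversed roles of $\Om_1$ and $\Om_2$ is handled correctly.
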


\begin{proof}
Since $K$ is $\kappa$-congruent, by Theorem~\ref{Markov-congruent} we
have $\phi' := K_*^{\mu_1}(\phi) = \kappa^*\phi$. Thus, for
$1 \leq k < \infty$,
%
\begin{equation}
\label{eq:norm-kappaphi} \bigl\|\phi'\bigr\|_k^k
=
\int_{\Om_2} \bigl\|\phi'\bigr\|
^k \,d\mu_2 =
\int_{\Om
_2}\kappa^*\|\phi\|^k \,d
\kappa_*\mu_1 =
\int_{\Om_1} \|\phi\|^k \,d
\mu_1 = \|\phi\|_k^k,
\end{equation}
showing the assertion. For $k = \infty$, $\|\phi'\|_\infty= \|\kappa
^*\phi\|_\infty= \|\phi\|_\infty$ is obvious.
\end{proof}

Next, we shall deal with statistics $\kappa: \Om\to\Om'$.

\begin{proposition} \label{lem:kappa-r}
Let $\kappa: \Om\to\Om'$ be a statistic and $\mu\in\Mm(\Om)$,
$\mu' := \kappa_*\mu\in\Mm(\Om')$, and let $\kappa_*^\mu:
L^1(\Om; \mu) \to L^1(\Om', \mu')$ be the map from (\ref
{eq:cond-ex-L1}). Then the following hold.
\begin{enumerate}
\item[(1)]
If $\phi\in L^k(\Om, \mu)$ for $1 \leq k \leq\infty$, then $\phi'
:= \kappa_*^\mu(\phi) \in L^k(\Om', \mu')$, and
%
\begin{equation}
\label{eq:estimate-kappa} \bigl\|\phi'\bigr\|_k \leq
\|\phi\|_k.
\end{equation}
\item[(2)]
For $1 < k < \infty$, equality in (\ref{eq:estimate-kappa}) holds iff
$\phi= \kappa^*\phi'$.
\end{enumerate}
\end{proposition}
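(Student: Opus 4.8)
The plan is to treat $\kappa_*^\mu$ as an abstract conditional expectation and to exploit the convexity of $t \mapsto |t|^k$. Since Example~\ref{ex:notrans} shows that $\kappa$-transverse measures need not exist, no fiberwise integration is available, and the whole argument must rest on the defining relation $\int_{A'}\phi'\,d\mu' = \int_{\kappa^{-1}(A')}\phi\,d\mu$ for all $A' \subset \Om'$. First I would record the elementary consequences of this relation: $\kappa_*^\mu$ is linear, monotone (it maps nonnegative functions to nonnegative functions) and preserves constants; taking $A' = \Om'$ gives the total-mass identity $\int_{\Om'}\kappa_*^\mu(\psi)\,d\mu' = \int_\Om \psi\,d\mu$; and, checking first for $\psi' = \chi_{B'}$ and extending by linearity and density, one obtains the pull-out and adjoint identities
\begin{equation*}
\kappa_*^\mu\bigl(\kappa^*\psi' \cdot h\bigr) = \psi'\,\kappa_*^\mu(h), \qquad \int_\Om \kappa^*\psi' \cdot h \,d\mu = \int_{\Om'} \psi'\,\kappa_*^\mu(h)\,d\mu',
\end{equation*}
valid whenever the products are integrable (by H\"older, for $\psi' \in L^{k/(k-1)}(\Om',\mu')$ and $h \in L^k(\Om,\mu)$), together with $\kappa_*^\mu(\kappa^*\psi') = \psi'$ from (\ref{eq:pushforward-mult}).

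For part (1) with $1 \leq k < \infty$, I would apply Jensen's inequality to the conditional expectation: writing the convex function $t\mapsto|t|^k$ as a countable supremum of affine functions $a_n t + b_n$ and using monotonicity and constant-preservation of $\kappa_*^\mu$ gives the pointwise bound $|\phi'|^k \leq \kappa_*^\mu(|\phi|^k)$ $\mu'$-a.e.; integrating and invoking the total-mass identity yields $\|\phi'\|_k^k \leq \|\phi\|_k^k$. For $k = \infty$, if $|\phi| \leq M$ $\mu$-a.e. then $M \pm \phi \geq 0$, so by monotonicity and constant-preservation $M \pm \phi' \geq 0$, i.e. $\|\phi'\|_\infty \leq \|\phi\|_\infty$.

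For part (2) (with $1 < k < \infty$) I would exploit the strict convexity of $\Psi(t) := |t|^k$ through its associated Bregman divergence. Set $\Phi := \kappa^*\phi'$ and observe $\Psi'(\Phi) = \kappa^*\bigl(\Psi'(\phi')\bigr)$ and $\kappa_*^\mu(\Phi) = \phi' = \kappa_*^\mu(\phi)$. The pointwise inequality
\begin{equation*}
D := \Psi(\phi) - \Psi(\Phi) - \Psi'(\Phi)(\phi - \Phi) \geq 0, \qquad D = 0 \Longleftrightarrow \phi = \Phi,
\end{equation*}
holds by strict convexity. Integrating over $\Om$, using the pushforward identity $\int_\Om \kappa^* g\,d\mu = \int_{\Om'} g\,d\mu'$ for the middle term and the adjoint identity together with $\kappa_*^\mu(\phi - \Phi) = 0$ to annihilate the last term, one obtains the single identity
\begin{equation*}
\int_\Om D \,d\mu = \|\phi\|_k^k - \|\phi'\|_k^k.
\end{equation*}
Since $D \geq 0$, this already re-proves inequality (1) for $1 < k < \infty$, and equality $\|\phi\|_k = \|\phi'\|_k$ holds iff $\int_\Om D\,d\mu = 0$, i.e. iff $D = 0$ $\mu$-a.e., which by strict convexity is equivalent to $\phi = \Phi = \kappa^*\phi'$; thus only $k = 1$ and $k = \infty$ in part (1) genuinely need the separate arguments above. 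The main obstacle is exactly the absence of transverse measures, which forces everything to be phrased through the identities for $\kappa_*^\mu$ rather than fiber integrals; the equality characterization then hinges entirely on the \emph{strict} convexity of $|t|^k$ for $1 < k < \infty$, which is precisely why the endpoints are excluded from (2). The remaining care is to verify the integrability justifying the pull-out/adjoint identities with $\psi' = \Psi'(\phi') \in L^{k/(k-1)}(\Om',\mu')$, which follows from $\phi, \Phi \in L^k(\Om,\mu)$ by H\"older.
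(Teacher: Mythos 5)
Your proof is correct, and it takes a genuinely different route from the paper's. The paper handles $k=1$ via the total variation estimate (\ref{eq:mu-abs}) and $k=\infty$ via monotonicity, and then for $1<k<\infty$ runs the duality computation $\|\phi'\|_k^k=\int_\Om \kappa^*({\phi'}^{k-1})\,\phi\,d\mu\le \|\phi'\|_k^{k-1}\|\phi\|_k$ via H\"older, first for $\phi\ge 0$ and then by splitting $\phi=\phi_+-\phi_-$; the equality case is read off from the equality case of H\"older, and a separate approximation by $L^\infty$ functions is needed beforehand to establish $\phi'\in L^k(\Om',\mu')$ at all, since the duality computation presupposes it. Your conditional-Jensen argument $|\phi'|^k\le \kappa_*^\mu(|\phi|^k)$ delivers part (1) for all $1\le k<\infty$ in one stroke and yields the membership $\phi'\in L^k(\Om',\mu')$ for free, eliminating the approximation step; and your Bregman identity $\int_\Om D\,d\mu=\|\phi\|_k^k-\|\phi'\|_k^k$ with $D\ge 0$, $D=0\Leftrightarrow\phi=\kappa^*\phi'$, handles signed $\phi$ directly and packages the inequality and its equality characterization into a single computation, avoiding the $\phi_\pm$ splitting (whose equality analysis in the paper is the most delicate step, since it combines equality in a triangle inequality with equality in two separate H\"older estimates). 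The identities you rely on --- positivity, preservation of constants, the pull-out property $\kappa_*^\mu(\kappa^*\psi'\cdot h)=\psi'\,\kappa_*^\mu(h)$, and $\kappa_*^\mu(\kappa^*\psi')=\psi'$ --- all follow from (\ref{eq:cond-exp}) and (\ref{eq:pushforward-mult}) by checking on indicators and extending by linearity and density, exactly as you indicate, and the integrability needed for the adjoint identity with $\Psi'(\phi')\in L^{k/(k-1)}(\Om',\mu')$ is indeed supplied by H\"older once part (1) gives $\phi'\in L^k$ and (\ref{eq:kappa*}) gives $\kappa^*\phi'\in L^k(\Om,\mu)$. The trade-off is that you import (or re-derive) the conditional-expectation machinery, whereas the paper stays entirely within its pushforward formalism; your version is shorter and arguably cleaner on the equality case.
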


\begin{remark}
The estimate (\ref{eq:estimate-kappa}) in Proposition~\ref{lem:kappa-r} also follows from \cite{Neveu1965}, Proposition IV.3.1.
\end{remark}

\begin{proof}[Proof of Proposition~\ref{lem:kappa-r}]
We decompose $\phi= \phi_+ - \phi_-$ as in (\ref
{eq:phi+-}). Then $\|\phi'\|_1 = \|\kappa_*(\phi\mu)\|_{\mathrm{TV}}$ and
$\|\phi\|_1 = \|\phi\mu\|_{\mathrm{TV}}$, so that (\ref{eq:mu-abs}) implies
(\ref{eq:estimate-kappa}) for $k = 1$.

If $\phi\in L^\infty(\Om, \mu)$, then $|\phi\mu| \leq\|\phi\|
_\infty\mu$, and by monotonicity of $\kappa_*$ it follows that
\[
\bigl\|\phi' \mu'\bigr\|= \bigl\|
\kappa_*(\phi\mu)\bigr\|\leq\kappa_*\|\phi\mu\|\leq\|\phi
\|_\infty\mu',
\]
whence $\|\phi'\|_\infty\leq\|\phi\|_\infty$, so that (\ref
{eq:estimate-kappa}) holds for $k = \infty$.

For $\phi' \in L^k(\Om', \mu')$ (\ref{eq:norm-kappaphi}) implies
that $\kappa^*\phi' \in L^k(\Om, \mu)$ and
%
\begin{equation}
\label{eq:kappa*} \bigl\|\kappa^*\phi'\bigr\|
_k = \bigl\|\phi'\bigr\|_k
\qquad \mbox{for all }\phi' \in L^k\bigl(
\Om', \mu'\bigr), 1 \leq k \leq\infty.
\end{equation}

Suppose now that $\phi\in L^k(\Om, \mu)$ with $1 < k < \infty$ is
such that $\phi' \in L^k(\Om', \mu')$, and assume that $\phi\geq0$
and hence, $\phi' \geq0$. Then
\begin{eqnarray*}
\bigl\|\phi'\bigr\|_k^k & = &
\int_{\Om'} {\phi'}^k \,d
\mu' =
\int_{\Om'} {\phi'}^{k-1} \,d\kappa_*(
\phi\mu) =
\int_\Om\kappa^*\bigl({\phi '}^{k-1}
\bigr) \phi \,d\mu
\\
& \stackrel{(*)} \leq& \bigl\|\kappa^*\bigl({\phi'}^{k-1}
\bigr)\bigr\|_{k/(k-1)} \|\phi \|_k
\\
& \stackrel{(**)}= & \bigl\|{\phi'}^{k-1}\bigr
\|_{k/(k-1)} \|\phi\|_k = \bigl\|\phi
'\bigr\|_k^{k-1} \|\phi\|
_k.
\end{eqnarray*}
From this, (\ref{eq:estimate-kappa}) follows. Here we used H\"older's
inequality at $(*)$, and (\ref{eq:kappa*}) applied to ${\phi'}^{k-1}
\in L^{k/(k-1)}(\Om', \mu')$ at $(**)$. Moreover, equality at $(*)$
holds iff $\phi= c  \kappa^*\phi'$ for some $c \in\R$, and the
fact that $\kappa_*(\phi\mu) = \phi' \mu'$ easily implies that $c
= 1$, i.e., equality in (\ref{eq:estimate-kappa}) occurs iff $\phi=
\kappa^*\phi'$.

If we drop the assumption that $\phi\geq0$, we decompose $\phi= \phi
_+ - \phi_-$ as in (\ref{eq:phi+-}) and let $\phi'_\pm:= \kappa
_*^\mu(\phi_\pm) \geq0$. Although in general, $\phi'_+$ and $\phi
'_-$ do not have disjoint support, the linearity of $\kappa_*$ still
implies that $\phi' = \phi'_+ - \phi'_-$. Let us assume that $\phi
'_\pm\in L^k(\Om', \mu')$. Then
\[
\bigl\|\phi'\bigr\|_k = \bigl\|
\phi'_+ - \phi'_-\bigr\| _k \leq\bigl
\| \phi'_+\bigr\| _k + \bigl\| \phi
'_-\bigr\| _k \leq\| \phi_+\|
_k + \| \phi_-\| _k = \|
\phi_k\| ,
\]
using (\ref{eq:estimate-kappa}) applied to $\phi_\pm\geq0$ in the
second estimate. Equality in the second estimate holds iff $\phi_\pm=
\kappa^*\phi'_\pm$, and thus, $\phi= \phi_+ - \phi_- = \kappa
^*(\phi'_+ - \phi'_-) = \kappa^*\phi'$.

Thus, it remains to show that $\phi' \in L^k(\Om', \mu')$ whenever
$\phi\in L^k(\Om, \mu)$. For this, let $\phi\in L^k(\Om, \mu)$,
$(\phi_n)_{n \in\N}$ be a sequence in $L^\infty(\Om, \mu)$
converging to $\phi$ in $L^k(\Om, \mu)$, and let $\phi'_n := \kappa
_*^\mu(\phi_n) \in L^\infty(\Om', \mu') \subset L^k(\Om', \mu
')$. As $(\phi'_n - \phi'_m)_\pm\in L^\infty(\Om', \mu') \subset
L^k(\Om', \mu')$, (\ref{eq:estimate-kappa}) holds for $\phi_n -
\phi_m$ by the previous argument, that is,
\[
\bigl\| \phi'_n - \phi'_m
\bigr\| _k \leq\| \phi_n - \phi_m
\| _k,
\]
which tends to $0$ for $n, m \to\infty$, as $(\phi)_n$ is convergent
and hence a Cauchy sequence in $L^k(\Om, \mu)$. Thus, $(\phi')_n$ is
also a Cauchy sequence, whence it converges to some $\tilde\phi' \in
L^k(\Om', \mu')$. It follows that $\phi_n - \kappa^*\phi'_n$
converges in $L^k(\Om, \mu)$ to $\phi- \kappa^*\tilde\phi'$, and
as $\kappa_*((\phi_n - \kappa^*\phi'_n)\mu) = 0$ for all $n$, we have
\[
0 = \kappa_*\bigl(\bigl(\phi- \kappa^*\tilde\phi'\bigr)\mu\bigr) =
\phi' \mu' - \tilde\phi'
\mu',
\]
whence $\phi' = \tilde\phi' \in L^k(\Om', \mu')$.
\end{proof}

Putting the last two results together, we obtain the following
theorem.

\begin{theorem} \label{thm:K-Lk}
Let $K: \Om\to\Pp(\Om')$ be a Markov kernel, $\mu\in\Mm(\Om)$
and $\mu' := K_*\mu\in\Mm(\Om')$. Then for $\phi\in L^k(\Om, \mu
)$ with $1 \leq k \leq\infty$ we have $K_*^\mu(\phi) \in L^k(\Om',
\mu')$, and
\[
\bigl\| K_*^\mu(\phi)\bigr\| _k \leq\|
\phi\| _k.
\]
\end{theorem}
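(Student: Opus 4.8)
The plan is to reduce the statement for a general Markov kernel to the two special cases already treated---congruent kernels (Proposition~\ref{prop:Kr-cong}) and statistics (Proposition~\ref{lem:kappa-r})---by invoking the decomposition Theorem~\ref{thm:decomp-Markov}. Concretely, I would write $K = K^{\kappa_2} K^{\mathrm{cong}}$, where $K^{\mathrm{cong}}: \Om \to \Pp(\hat\Om)$ is congruent with respect to a statistic $\kappa_1: \hat\Om \to \Om$ and $\kappa_2: \hat\Om \to \Om'$ is a statistic. Setting $\hat\mu := (K^{\mathrm{cong}})_*\mu \in \Mm(\hat\Om)$, the chain rule (\ref{eq:Markov-chainrule}) gives $(\kappa_2)_*\hat\mu = (\kappa_2)_*(K^{\mathrm{cong}})_*\mu = K_*\mu = \mu'$, so that the push-forwards match up along the decomposition and the two propositions can be applied to the correct base measures.

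Next I would apply the two results to the factors. Since $K^{\mathrm{cong}}$ is $\kappa_1$-congruent, Proposition~\ref{prop:Kr-cong} shows that $\psi := (K^{\mathrm{cong}})_*^\mu(\phi)$ lies in $L^k(\hat\Om, \hat\mu)$ with $\|\psi\|_k = \|\phi\|_k$ for every $\phi \in L^k(\Om, \mu)$. Since $\kappa_2$ is a statistic with $(\kappa_2)_*\hat\mu = \mu'$, Proposition~\ref{lem:kappa-r} shows that $(\kappa_2)_*^{\hat\mu}(\psi) \in L^k(\Om', \mu')$ with $\|(\kappa_2)_*^{\hat\mu}(\psi)\|_k \leq \|\psi\|_k$. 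Chaining these two relations yields $\|(\kappa_2)_*^{\hat\mu}(\psi)\|_k \leq \|\phi\|_k$, which is exactly the desired estimate---\emph{provided} I can identify $(\kappa_2)_*^{\hat\mu}(\psi)$ with $K_*^\mu(\phi)$.

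This identification---that the induced $L^1$-maps compose as $K_*^\mu = (\kappa_2)_*^{\hat\mu} \circ (K^{\mathrm{cong}})_*^\mu$---is the step that requires genuine care, and it is where I expect the only real work to lie. I would verify it directly from the defining relation (\ref{eq:cond-K}): on the one hand $K_*(\phi\mu) = K_*^\mu(\phi)\,\mu'$ by definition, while on the other hand, using (\ref{eq:Markov-chainrule}) together with (\ref{eq:cond-K}) applied to each factor,
\[
K_*(\phi\mu) = (\kappa_2)_*(K^{\mathrm{cong}})_*(\phi\mu) = (\kappa_2)_*\bigl(\psi\,\hat\mu\bigr) = (\kappa_2)_*^{\hat\mu}(\psi)\,\mu'.
\]
Comparing Radon--Nikodym derivatives with respect to $\mu'$ gives $K_*^\mu(\phi) = (\kappa_2)_*^{\hat\mu}(\psi)$, as required, and completes the proof of the inequality for $1 \leq k < \infty$. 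The case $k = \infty$ follows from the very same chain, invoking the respective $k = \infty$ assertions in Propositions~\ref{prop:Kr-cong} and~\ref{lem:kappa-r}.
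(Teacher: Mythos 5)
Your proposal is correct and follows essentially the same route as the paper: decompose $K = K^{\kappa_2} K^{\mathrm{cong}}$ via Theorem~\ref{thm:decomp-Markov}, apply Proposition~\ref{prop:Kr-cong} to the congruent factor and Proposition~\ref{lem:kappa-r} to the statistic, and chain the norm relations. Your explicit verification that $K_*^\mu = (\kappa_2)_*^{\hat\mu} \circ (K^{\mathrm{cong}})_*^\mu$ by comparing Radon--Nikodym derivatives is a detail the paper states without elaboration, but it is the same argument.
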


\begin{proof}
By Theorem~\ref{thm:decomp-Markov}, we can decompose $K
= \kappa_* K^{\mathrm{cong}}$, where $K^{\mathrm{cong}}: \Om\to\Pp(\hat\Om)$ is
congruent w.r.t. some statistic $\hat\kappa: \hat\Om\to\Om$, and
with a statistic $\kappa: \hat\Om\to\Om'$. Then it follows that
$K_* = \kappa_* K^{\mathrm{cong}}_* $, and whence,
\[
K_*^\mu= \kappa_*^{\hat\mu} {K^{\mathrm{cong}}_*}^\mu,
\]
where $\hat\mu:= K^{\mathrm{cong}}_*(\mu) \in\Mm(\hat\Om)$. Given $\phi
\in L^k(\Om, \mu)$, then by Theorem~\ref{Markov-congruent}, $\hat
\phi:= {K^{\mathrm{cong}}_*}^{\mu}(\phi) = \hat\kappa^*\phi$, whence $\phi
' := K_*^\mu(\phi) = \kappa_*^{\hat\mu}(\hat\phi)$. Thus,
\[
\bigl\| K_*^\mu(\phi)\bigr\| _k = \bigl\|
\kappa_*^{\hat\mu}(\hat\phi)\bigr\| _k \leq\| \hat\phi
\| _k = \| \phi\| _k,
\]
where the first estimate follows from Proposition~\ref{lem:kappa-r},
whereas the second equation follows from Proposition~\ref{prop:Kr-cong}.
\end{proof}

\begin{remark}\label{rem3.3}
Theorem~\ref{thm:K-Lk} can be interpreted in a different way. Namely,
given a Markov kernel $K: \Om\to\Pp(\Om')$ and $r \in(0,1]$, one
can define the map $K_*^r: \Ss^r(\Om) \to\Ss^r(\Om')$ by
%
\begin{equation}
\label{eq:def-Kr} K^r_*\bigl(\tilde\mu^r\bigr) := \tilde
\pi^r(K_*\mu) \qquad \mbox{for }\mu \in\Ss(\Om),
\end{equation}
with the signed $r$th power $\tilde\mu^r$ defined before. Since
$\tilde\pi^r$ and $\tilde\pi^{1/r}$ are both continuous by
Proposition~\ref{prop:powers-C1}, the map $K^r_*$ is continuous, but
it fails to be $C^1$ for $r < 1$, even for finite $\Om$.

Let $\mu\in\Mm(\Om)$ and $\mu' := K_*\mu\in\Mm(\Om')$, so that
$K_*^r(\mu^r) = {\mu'}^r$. If there was a derivative of $K_*^r$ at
$\mu^r$, then it would have to be a map between the tangent spaces
$T_{\mu^r}\Mm(\Om)$ and $T_{{\mu'}^r}\Mm(\Om')$, i.e., according
to Proposition~\ref{prop:TMr-SMr} between $\Ss^r(\Om, \mu)$ and
$\Ss^r(\Om', \mu')$. Let $k := 1/r > 1$, $\phi\in L^k(\Om, \mu)
\subset L^1(\Om, \mu)$, so that $\phi' := K_*^\mu(\phi) \in
L^k(\Om', \mu')$ by Theorem~\ref{thm:K-Lk}. Then by Proposition~\ref
{prop:powers-C1} and the chain rule we obtain
\begin{eqnarray*}
d\bigl(\tilde\pi^k K_*^r\bigr)_{\mu^r}\bigl(\phi
\mu^r\bigr) & = & k {\mu'}^{1-r} \cdot d
\bigl(K_*^r\bigr)_{\mu^r}\bigl(\phi\mu^r\bigr),
\\
d\bigl(K_* \tilde\pi^k\bigr)_{\mu^r}\bigl(\phi
\mu^r\bigr) & = & k K_*(\phi\mu) = k \phi'
\mu',
\end{eqnarray*}
and these should coincide as $\tilde\pi^k K_*^r = K_* \tilde\pi^k$
by (\ref{eq:def-Kr}). Since $d(K_*^r)_{\mu^r}(\phi\mu^r) \in\Ss
^r(\Om', \mu')$, we thus must have
%
\begin{equation}
\label{def-dKr} d\bigl(K_*^r\bigr)_{\mu^r}\bigl(\phi
\mu^r\bigr) = \phi' {\mu'}^r,
\qquad \mbox{where } \phi' = K_*^\mu(\phi).
\end{equation}

Thus, Theorem~\ref{thm:K-Lk} states that this map is a well defined
linear operator with operator norm $\leq1$. The map $d(K_*^r)_{\mu
^r}: \Ss^r(\Om, \mu) \to\Ss^r(\Om', \mu')$ from (\ref{def-dKr})
is called the \emph{formal derivative of} $K_*^r$ \emph{at} $\mu^r$.
\end{remark}

\section{Parametrized measure models and $k$-integrability} \label{sec:k-int}

In this section, we shall now present our notion of a parametrized
measure model.

\begin{definition}[Parametrized measure model] \label{def:param-measure-model}
Let $\Om$ be a measure space.
\begin{enumerate}
\item[(1)]
A \emph{parametrized measure model} is a triple $(M, \Om, \mathbf{p})$
where $M$ is a (finite or infinite dimensional) Banach manifold and
$\mathbf{p}: M \to\Mm(\Om) \subset\Ss(\Om)$ is a $C^1$-map in the
sense explained in Section~\ref{sec:Banach}.

\item[(2)]
The triple $(M, \Om, \mathbf{p})$ is called a \emph{statistical model} if
it consists only of probability measures, that is, such that the image
of $\mathbf{p}$ is contained in $\Pp(\Om)$.

\item[(3)]
We call such a model \emph{dominated by} $\mu_0$ if the image of $\mathbf{p}$ is contained in $\Ss(\Om, \mu_0)$. In this case, we use the
notation $(M, \Om, \mu_0, \mathbf{p})$ for this model.
\end{enumerate}
\end{definition}

\begin{remark}\label{rem4.1}
Evidently, for the applications we have in mind, we are interested
mainly in statistical models. However, we can take the point of view
that $\Pp(\Om)$ is the projectivization of $\Pp(\Om) = {\mathbb
P}(\Mm(\Om) \backslash0)$ via rescaling. Thus, given a parametrized
measure model $(M, \Om, \mathbf{p})$, normalization yields a statistical
model $(M, \Om, \mathbf{p}_0)$ defined by
\[
\mathbf{p}_0(\xi) := \frac{\mathbf{p}(\xi)}{\| \mathbf{p}(\xi) \|_{\mathrm{TV}}},
\]
which is again a $C^1$-map. Indeed, the map $\mu\mapsto\|\mu\|_{\mathrm{TV}}$
on $\Mm(\Om)$ is a $C^1$-map, being the restriction of the linear
(and hence differentiable) map $\mu\mapsto\int_\Om d\mu$ on $\Ss
(\Om)$.

Observe that while $\Ss(\Om)$ is a Banach space, the subsets $\Mm
(\Om)$ and $\Pp(\Om)$ do not carry a canonical manifold structure.
\end{remark}

If a parametrized measure model $(M, \Om, \mu_0, \mathbf{p})$ is
dominated by $\mu_0$, then there is a \emph{density function} $p: \Om
\times M \to\R$ such that
%
\begin{equation}
\label{eq:density-fctn} \mathbf{p}(\xi) = p(\cdot;\xi) \mu_0.
\end{equation}

Evidently, we must have $p(\cdot;\xi) \in L^1(\Om, \mu_0)$ for all $\xi
$. In particular, for fixed $\xi$, $p(\cdot;\xi)$ is defined only up to
changes on a $\mu_0$-null set.

\begin{definition}[Regular density function] \label{def:regular-density}

Let $(M, \Om, \mu_0, \mathbf{p})$ be a parametrized measure model
dominated by $\mu_0$. We say that this model has a \emph{regular
density function} if the density function $p: \Om\times M \to\R$
satisfying (\ref{eq:density-fctn}) can be chosen such that for all $V
\in T_\xi M$ the partial derivative $\partial_V p(\cdot;\xi)$ exists and
lies in $L^1(\Om, \mu_0)$.
\end{definition}

\begin{remark} \label{rem:paramMeas}
The standard notion of a statistical model always assumes that it is
dominated by some measure and has a positive regular density function
(e.g., \cite{Amari1987}, Section~2, p.~25, \cite{AN2000}, Section~2.1, \cite
{Rao1945}, \cite{AJLS}, Definition~2.4). In fact, the definition of a
parametrized measure model or statistical model in \cite{AJLS}, Definition~2.4, is equivalent to a parametrized measure model or statistical
model with a positive regular density function in the sense of
Definition~\ref{def:regular-density}.

Let us point out why the present notion is indeed more general.
The formal definition of differentiability of $\mathbf{p}$ implies that
for each $C^1$-path $\xi(t) \in M$ with $\xi(0) = \xi$, $\dot\xi
(0) =: V \in T_\xi M$, the curve $t \mapsto p(\cdot;\xi(t)) \in L^1(\Om,
\mu_0)$ is differentiable. This implies that there is a $d_\xi\mathbf{p}(V) \in L^1(\Om, \mu_0)$ such that
\[
\biggl\| \frac{p(\cdot;\xi(t)) - p(\cdot;\xi)}t - d_\xi\mathbf{p}(V) (\cdot) \biggr
\| _1 \xrightarrow{t \to0} 0.
\]
If this is a \emph{pointwise} convergence, then $d_\xi\mathbf{p}(V) =
\partial_V p(\cdot; \xi)$ is the partial derivative and whence, $\partial
_V p(\cdot; \xi)$ lies in $L^1(\Om, \mu_0)$, so that the density
function is regular.

However, in general convergence in $L^1(\Om, \mu_0)$ does \emph{not}
imply pointwise convergence, whence there are parametrized measure
models in the sense of Definition~\ref{def:param-measure-model}
without a regular density function, cf. Example~\ref{ex:param}
below. Nevertheless, for simplicity we shall frequently use the
notation $\partial_V p(\cdot; \xi)$ instead of $d_\xi\mathbf{p}(V)(\cdot)$, even if the density function is \emph{not} regular.
\end{remark}

By this convention, for a parametrized measure model $(M, \Om, \mu_0,
\mathbf{p})$ we can describe its derivative in the direction of $V \in
T_\xi M$ as
\begin{equation*}
d_\xi\mathbf{p}(V) = \partial_V p(\cdot;\xi)
\mu_0.
\end{equation*}

\begin{example} \label{ex:param}
To see that there are parametrized measure models without a regular
density function, consider the family of measures on $\Om= (0,\pi)$
and $\xi\in(-1, \infty)$
\[
\mathbf{p}(\xi) := p(t; \xi) \, dt \qquad \mbox{with } p(t; \xi) = %
\begin{cases} \bigl(1 + \xi \bigl(\sin^2 (t - 1/\xi)
\bigr)^{1/\xi^2} \bigr) \, dt & \mbox{for }\xi\neq0,
\\
1 & \mbox{for }\xi= 0. \end{cases} %
\]
This model is dominated by the Lebesgue measure $dt$ with density
function $p$, and the partial derivative $\partial_\xi p$ does not
exist at $\xi= 0$, whence the density function is not regular.

On the other hand, $\mathbf{p}: \R\to\Mm(\Om, dt)$ is differentiable
in the above sense at $\xi= 0$ with $d_0\mathbf{p}(\partial_\xi) = 0$,
so that $(M, \Om, \mathbf{p})$ is a parametrized measure model in the
sense of Definition~\ref{def:param-measure-model}. To see this, we calculate
\begin{eqnarray*}
\biggl\| \frac{\mathbf{p}(\xi) - \mathbf{p}(0)}\xi\biggr\| _1 & = & \bigl
\| \bigl(\sin^2 (t - 1/\xi)\bigr)^{1/\xi^2} \,dt \bigr
\| _1
\\
& = &
\int_0^\pi\bigl(\sin^2 (t - 1/\xi)
\bigr)^{1/\xi^2} \,dt
\\
& = &
\int_0^\pi\bigl(\sin^2 t
\bigr)^{1/\xi^2} \,dt \xrightarrow{\xi\to0} 0,
\end{eqnarray*}
which shows the claim. Here, we used the $\pi$-periodicity of the
integrand for fixed $\xi$ and dominated convergence in the last step.
\end{example}

Since for a parametrized measure model $(M, \Om, \mathbf{p})$ the map
$\mathbf{p}$ is $C^1$, it follows that its derivative yields a continuous
map between the tangent fibrations
\[
d\mathbf{p}: TM \longrightarrow T\Mm(\Om) = \mathop{\dot\bigcup
}_{\mu\in\Mm
(\Om)} \Ss(\Om, \mu).
\]
That is, for each tangent vector $V \in T_\xi M$, its differential
$d_\xi\mathbf{p}(V)$ is contained in $\Ss(\Om, \mathbf{p}(\xi))$ and
hence dominated by $\mathbf{p}(\xi)$.

\begin{definition}\label{def:logarithmic}
Let $(M, \Om, \mathbf{p})$ be a parametrized measure model. Then for each
tangent vector $V \in T_\xi M$ of $M$, we define
%
\begin{equation}
\label{eq:deriv-log} \partial_V \log\mathbf{p}(\xi) := \frac{d\{d_\xi\mathbf{p}(V)\}}{d\mathbf{p}(\xi)}
\in L^1\bigl(\Om, \mathbf{p}(\xi)\bigr)
\end{equation}
and call this the \emph{logarithmic derivative of} $\mathbf{p}$ \emph{at} $\xi$ \emph{in
direction} $V$.
\end{definition}

If such a model is dominated by $\mu_0$ and has a regular density
function $p$ for which (\ref{eq:density-fctn}) holds, then we can
calculate the Radon--Nikodym derivative as
\begin{eqnarray*}
\frac{d\{d_\xi\mathbf{p}(V)\}}{d\mathbf{p}(\xi)} & = & \frac{d\{d_\xi
\mathbf{p}(V)\}}{d\mu_0} \cdot \biggl(\frac{d\mathbf{p}(\xi)}{d\mu
_0}
\biggr)^{-1}
\\
& = & \partial_V p(.;\xi) \bigl(p(.;\xi)\bigr)^{-1} =
\partial_V \log p(\cdot;\xi),
\end{eqnarray*}
where we use the convention $\log0 = 0$. This justifies the notation
in (\ref{eq:deriv-log}) even for models without a regular density function.

For a parametrized measure model $(M, \Om, \mathbf{p})$ and $k > 1$ we
consider the map
\begin{equation*}
\mathbf{p}^{1/k} := \pi^{1/k} \circ\mathbf{p}: M
\longrightarrow\Ss ^{1/k}(\Om), \qquad \xi\longmapsto\mathbf{p}(
\xi)^{1/k}.
\end{equation*}

Since $\pi^{1/k}$ is continuous by Proposition~\ref{prop:powers-C1},
it follows that $\mathbf{p}^{1/k}$ is continuous as well. Let us pretend
for the moment that $\mathbf{p}^{1/k}$ is a $C^1$-map, so that $d_\xi\mathbf{p}^{1/k}(V) \in T_{\mathbf{p}(\xi)^{1/k}}\Mm^{1/k}(\Om) = \Ss
^{1/k}(\Om, \mathbf{p}(\xi))$. In this case, because of $\pi^k \circ
\pi^{1/k} = \mathrm{Id}$, we have
\[
\mathbf{p} = \pi^k \circ\mathbf{p}^{1/k},
\]
whence by the chain rule and (\ref{eq:form-dpi}) we have for $\xi\in
M$ and $V \in T_\xi M$
\begin{equation*}
d_\xi\mathbf{p}(V) = k \mathbf{p}(\xi)^{1-1/k} \cdot
\bigl(d_\xi\mathbf{p}^{1/k}(V) \bigr).
\end{equation*}
Thus with (\ref{eq:deriv-log}) this implies
%
\begin{equation}
\label{eq:formal-derivative} d_\xi\mathbf{p}^{1/k}(V) =
\frac{1}k \partial_V \log\mathbf{p}(\xi)
\mathbf{p}^{1/k}(\xi) \in\Ss^{1/k}\bigl(\Om, \mathbf{p}(\xi)
\bigr)
\end{equation}
and hence, in particular, $\partial_V \log\mathbf{p}(\xi) \in L^k(\Om,
\mathbf{p}(\xi))$, and depends continuously on $V \in TM$. This motivates
the following definition.

\begin{definition}[$k$-integrable
parametrized measure model]\label{def:k-integrable}

A parametrized measure model $(M, \Om, \mathbf{p})$ is called $k$-\emph{integrable} for $k \geq1$ if for all $\xi\in M$ and $V \in T_\xi
M$ we have
\[
\partial_V \log\mathbf{p}(\xi) = \frac{d\{d_\xi\mathbf{p}(V)\}}{d\mathbf{p}(\xi)} \in
L^k\bigl(\Om, \mathbf{p}(\xi)\bigr),
\]
and moreover, the map
\[
d \mathbf{p}^{1/k}: T M \longrightarrow T\Ss^{1/k}(\Om)
\]
given in (\ref{eq:formal-derivative}) is continuous. Furthermore,
we call the model $\infty$-\emph{integrable} if it is $k$-integrable
for all $k \geq1$.
\end{definition}

Since $\mathbf{p}(\xi)$ is a finite measure, we have $L^k(\Om, \mathbf{p}(\xi)) \subset L^l(\Om, \mathbf{p}(\xi))$ for all $1 \leq l \leq k$.
Thus, $k$-integrability implies $l$-integrability for all such $l$.

\begin{remark} \label{rem:k-integ}
The reader who is familiar with the definition of $k$-integrability in \cite{AJLS}, Definition~2.4. might notice that there only the continuity of the norm $\|d_\xi \mathbf{p}^{1/k}\|$ on $TM$ is required. Also, by our previous discussion, a parametrized measure model $(M, \Om,
\mathbf{p})$ for which $\mathbf{p}^{1/k}$ is a $C^1$-map is always
$k$-integrable.

As it turns out, these three notions of $k$-integrability are equivalent. That is, $\|d_\xi \mathbf{p}^{1/k}\|$ is continuous if and only if $d_\xi \mathbf{p}^{1/k}$ is continuous if and only if $\mathbf{p}^{1/k}$ is a $C^1$-map.
\end{remark}

\begin{definition}[Canonical $n$-tensor] \label{def:canonical-n-tensor}
For $n \in\N$, the \emph{canonical} $n$-\emph{tensor} is the covariant
$n$-tensor on $\Ss^{1/n}(\Om)$, given by
%
\begin{equation}
\label{def:canonical-tensor} L^n_\Om(\nu_1, \ldots,
\nu_n) = n^n
\int_\Om d(\nu_1 \cdots\nu _n),
\qquad \mbox{where }\nu_i \in\Ss^{1/n}(\Om).
\end{equation}
\end{definition}

The main purpose of defining the notion of $k$-integrability is that
for a $k$-integrable model, we can for any $n \leq k$ define the pullback
%
\begin{eqnarray}
\label{eq:define-tau-n-M} \begin{aligned} \tau^n_{(M, \Om, \mathbf{p})} & := \bigl(\mathbf{p}^{1/n}\bigr)^\ast L^n_\Om, \qquad \qquad \text{whence}\\
\tau^n_{(M, \Om, \mathbf{p})}(V_1,
\ldots, V_n) & = L^n_\Om
\bigl(d_\xi \mathbf{p}^{1/n}(V_1), \ldots,
d_\xi\mathbf{p}^{1/n}(V_n)\bigr)
\\
& =
\int_\Om\partial_{V_1} \log\mathbf{p}(\xi) \cdots\, 
\partial _{V_n} \log\mathbf{p}(\xi) \, d\mathbf{p}(\xi), \end{aligned}
\end{eqnarray}
where the second line follows immediately from (\ref
{eq:formal-derivative}) and (\ref{def:canonical-tensor}). 
This is well defined as $\mathbf{p}^{1/n}: M \to{\cal S}^{1/n}(\Om)$ is differentiable by Remark~\ref{rem:k-integ}

\begin{example}\label{ex4.2}
\begin{enumerate}
\item[(1)] For $n=1$, the canonical $1$-form is given as
\begin{equation*}
\tau^1_{(M, \Om, \mathbf{p})}(V) :=
\int_\Om\p_V \log\mathbf{p}(\xi) \, d\mathbf{p}(
\xi) = \p_V \bigl\| \mathbf{p}(\xi)\bigr\| .
\end{equation*}
Thus, it vanishes if and only if $\|\mathbf{p}(\xi)\|$ is locally
constant, e.g., if $(M, \Om, \mathbf{p})$ is a \emph{statistical} model.
\item[(2)] For $n = 2$, $\tau^2_{(M, \Om, \mathbf{p})}$ coincides with the
\textit{Fisher metric}
%
\begin{equation}
\g^F(V, W)_\xi : =
\int_\Om\p_V \log\mathbf{p}(\xi) \,
\p_W \log \mathbf{p}(\xi) \, d\mathbf{p}(\xi)\label{for:fisher}
\end{equation}
\item[(3)] For $n=3$, $\tau^3_{(M, \Om, \mathbf{p})}$ coincides with the
\textit{Amari--Chentsov} 3-\textit{symmetric tensor}
\begin{equation*}
T^{\mathrm{AC}} (V,W, X)_\xi: =
\int_\Om\p_V \log\mathbf{p}(\xi)
\, \p_W \log\mathbf{p}(\xi) \, \p_X \log\mathbf{p}(\xi) \,
d \mathbf{p}(\xi).
\end{equation*}
\end{enumerate}
\end{example}

Observe that the Fisher metric $\g^F$ is a Riemannian metric on $M$
iff $\mathbf{p}$ is an immersion, i.e., if $\ker d_\xi\mathbf{p} = 0$.

\begin{remark}\label{rem4.4}
While the Fisher metric and the Amari--Chentsov tensor give an
interpretation of $\tau_{(M, \Om, \mathbf{p})}^n$ for $n = 2,3$, we do
not know of any statistical significance of $\tau_{(M, \Om, \mathbf{p})}^n$ for $n \geq4$. However, we shall show later that $\tau
^{2n}_M$ can be used to measure the information loss of statistics and
Markov kernels, cf. Theorem~\ref{thm:monotonicity}. Moreover, in \cite{Lauritzen1987}, p.~212, the question is posed if there are other
significant tensors on statistical manifolds, and the canonical
$n$-tensors may be considered as natural candidates.
\end{remark}

\section{Parametrized measure models and sufficient statistics} \label{sec:suffstat}

Given a parametrized measure model (statistical model, respectively)
$(M, \Om, \mathbf{p})$ and a Markov kernel $K: \Om\to\Pp(\Om')$ which
induces the Markov morphism $K_*: \Mm(\Om) \to\Mm(\Om')$ as in
(\ref{eq:Markov-linear}), we obtain another parametrized measure model
(statistical model, respectively) $(M, \Om', \mathbf{p}')$ by defining
$\mathbf{p}'(\xi) := K_*\mathbf{p}(\xi)$. These transitions can be
interpreted as data processing in statistical decision theory, which
can be deterministic (i.e., given by a statistic) or randomized (i.e.,
given by a Markov kernel). We refer to for example, \cite
{Chentsov1982} where this is elaborated in detail.

It is the purpose of this section to investigate the relation between
these two models in more detail.

\begin{theorem}\label{thm:induced-kintegrable}
Let $(M, \Om, \mathbf{p})$, $K: \Om\to\Pp(\Om')$ and $(M, \Om', \mathbf{p}')$ be as above, and suppose that $(M, \Om, \mathbf{p})$ is
$k$-integrable for some $k \geq1$. Then $(M, \Om', \mathbf{p}')$ is also
$k$-integrable, and
%
\begin{equation}
\label{eq:est-log} \bigl\| \p_V \log\mathbf{p}'(\xi)
\bigr\| _k \leq\bigl\| \p_V \log\mathbf{p}(\xi)
\bigr\| _k\qquad \mbox{for all }V \in T_\xi M,
\end{equation}
where the norms are taken in $L^k(\Om, \mathbf{p}(\xi))$ and $L^k(\Om',
\mathbf{p}'(\xi))$, respectively. If $K$ is congruent, then equality in
(\ref{eq:est-log}) holds for all $V$.

Moreover, if $K$ is given by a statistic $\kappa: \Om\to\Om'$ and
$k > 1$, then equality in (\ref{eq:est-log}) holds iff $\p_V \log
\mathbf{p}(\xi) = \kappa^*(\p_V \log\mathbf{p}'(\xi))$. In particular,
equality in (\ref{eq:est-log}) either holds for \emph{all} $k > 1$ for
which the model is $k$-integrable or for \emph{no} such $k > 1$.
\end{theorem}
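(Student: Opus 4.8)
The plan is to transport every structure through the bounded linear map $K_*$, with the engine of the whole argument being a single identity for the logarithmic derivative. Since $\mathbf{p}' = K_* \circ \mathbf{p}$ and $K_*\colon \Ss(\Om) \to \Ss(\Om')$ is bounded linear (hence $C^1$), the composite $\mathbf{p}'$ is a $C^1$-map into $\Ss(\Om')$, so $(M,\Om',\mathbf{p}')$ is again a parametrized measure model with $d_\xi\mathbf{p}'(V) = K_*(d_\xi\mathbf{p}(V))$. Writing $d_\xi\mathbf{p}(V) = \p_V\log\mathbf{p}(\xi)\,\mathbf{p}(\xi)$ as in Definition~\ref{def:logarithmic} and applying $K_*$ together with the defining relation~\eqref{eq:cond-K} for the conditional expectation, I obtain
\[
d_\xi\mathbf{p}'(V) = K_*\bigl(\p_V\log\mathbf{p}(\xi)\,\mathbf{p}(\xi)\bigr) = K_*^{\mathbf{p}(\xi)}\bigl(\p_V\log\mathbf{p}(\xi)\bigr)\,\mathbf{p}'(\xi),
\]
so that reading off the Radon--Nikodym derivative against $\mathbf{p}'(\xi) = K_*\mathbf{p}(\xi)$ gives the fundamental identity
\[
\p_V\log\mathbf{p}'(\xi) = K_*^{\mathbf{p}(\xi)}\bigl(\p_V\log\mathbf{p}(\xi)\bigr).
\]

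With this identity in hand, the inequality and the equality statements are immediate consequences of Section~\ref{sec:congruent}. Since $(M,\Om,\mathbf{p})$ is $k$-integrable, $\p_V\log\mathbf{p}(\xi) \in L^k(\Om,\mathbf{p}(\xi))$; Theorem~\ref{thm:K-Lk} then yields $\p_V\log\mathbf{p}'(\xi) \in L^k(\Om',\mathbf{p}'(\xi))$ together with $\|\p_V\log\mathbf{p}'(\xi)\|_k \le \|\p_V\log\mathbf{p}(\xi)\|_k$, which is exactly~\eqref{eq:est-log}. If $K$ is congruent, Proposition~\ref{prop:Kr-cong} upgrades Theorem~\ref{thm:K-Lk} to an equality of norms, giving equality in~\eqref{eq:est-log} for every $V$. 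If $K$ is the statistic $\kappa$ and $k>1$, Proposition~\ref{lem:kappa-r}(2) characterizes equality in the $L^k$-estimate for $\kappa_*^{\mathbf{p}(\xi)}$ by the requirement that its argument be a pullback; combined with the fundamental identity this reads $\p_V\log\mathbf{p}(\xi) = \kappa^*\bigl(\p_V\log\mathbf{p}'(\xi)\bigr)$. Finally, since $\p_V\log\mathbf{p}'(\xi) = \kappa_*^{\mathbf{p}(\xi)}(\p_V\log\mathbf{p}(\xi))$ and the equality condition do not involve $k$ at all, equality holds either for all $k>1$ for which the model is $k$-integrable or for none, proving the last assertion.

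It remains to establish $k$-integrability of $(M,\Om',\mathbf{p}')$, i.e.\ by Definition~\ref{def:k-integrable} the continuity of $d\mathbf{p}'^{1/k}\colon TM \to T\Ss^{1/k}(\Om')$; this is the main obstacle, since membership $\p_V\log\mathbf{p}'(\xi) \in L^k$ alone does not suffice. Formula~\eqref{eq:formal-derivative} together with the fundamental identity gives $d_\xi\mathbf{p}'^{1/k}(V) = \tfrac1k\,\p_V\log\mathbf{p}'(\xi)\,\mathbf{p}'(\xi)^{1/k} = d(K_*^{1/k})_{\mathbf{p}(\xi)^{1/k}}\bigl(d_\xi\mathbf{p}^{1/k}(V)\bigr)$, where $d(K_*^{1/k})$ is the formal derivative of Remark~\ref{rem3.3}. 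Thus $d\mathbf{p}'^{1/k} = \hat K \circ d\mathbf{p}^{1/k}$, where $\hat K\colon T\Mm^{1/k}(\Om)\to T\Mm^{1/k}(\Om')$ sends $(\mu^{1/k},\rho)$ to $\bigl((K_*\mu)^{1/k},\, d(K_*^{1/k})_{\mu^{1/k}}(\rho)\bigr)$; as $d\mathbf{p}^{1/k}$ is continuous by $k$-integrability of $(M,\Om,\mathbf{p})$, it suffices to prove that $\hat K$ is continuous on the tangent fibration.

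I would prove continuity of $\hat K$ by taking a convergent sequence $(\mu_n^{1/k},\rho_n)\to(\mu^{1/k},\rho)$, subsuming it into $\Ss^{1/k}(\Om,\sigma)$ for a common dominating measure $\sigma$ (as in the proof of Proposition~\ref{prop:TMr-SMr}) and its push-forward $\sigma':=K_*\sigma$, and then working in the isometric models $L^k(\Om,\sigma)$ and $L^k(\Om',\sigma')$. Writing $\rho_n = \psi_n\mu_n^{1/k}$, $\psi_n' := K_*^{\mu_n}(\psi_n)$, $\mu_n' := K_*\mu_n$, and $\rho_n' := d(K_*^{1/k})_{\mu_n^{1/k}}(\rho_n) = \psi_n'\mu_n'^{1/k}$, one has on the one hand $\mu_n'^{1/k}\to\mu'^{1/k}$ in $\Ss^{1/k}(\Om')$ (continuity of $K_*^{1/k}$) and on the other hand, using the continuous multiplication $\Ss^{1/k}\times\Ss^{1-1/k}\to\Ss^1$ and boundedness of $K_*$, that $\psi_n'\mu_n' = K_*\bigl(\rho_n\cdot\mu_n^{1-1/k}\bigr)\to\psi'\mu'$ in $\Ss(\Om')=\Ss^1(\Om')$. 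The heart of the matter is a convergence lemma combining these two facts with the uniform operator bound $\|d(K_*^{1/k})_{\mu^{1/k}}\|\le 1$ from Theorem~\ref{thm:K-Lk} to conclude $\rho_n'\to\rho' := d(K_*^{1/k})_{\mu^{1/k}}(\rho)$ in $\Ss^{1/k}(\Om')$: in the $L^k(\Om',\sigma')$-picture this amounts to showing that $L^1$-convergence of the products $\psi_n'g_n^k$ (with $g_n := (d\mu_n'/d\sigma')^{1/k}$) together with $L^k$-convergence of the base roots $g_n$ forces $L^k$-convergence of $\psi_n'g_n$, under the uniform $L^k$-bound on $\psi_n'g_n$. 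This final step, essentially a uniform-integrability (Vitali) argument, is where the genuine analytic work lies; the remaining verifications are routine.
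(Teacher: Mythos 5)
Your main line of argument is exactly the paper's: from $d_\xi\mathbf{p}'(V)=K_*(d_\xi\mathbf{p}(V))$ you get $\p_V\log\mathbf{p}'(\xi)=K_*^{\mathbf{p}(\xi)}(\p_V\log\mathbf{p}(\xi))$, after which (\ref{eq:est-log}) is Theorem~\ref{thm:K-Lk}, the congruent case is Proposition~\ref{prop:Kr-cong}, the equality characterization for a statistic is Proposition~\ref{lem:kappa-r}(2), and the $k$-independence of the equality condition is read off from the fact that the condition $\p_V\log\mathbf{p}(\xi)=\kappa^*(\p_V\log\mathbf{p}'(\xi))$ does not mention $k$. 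Up to that point your proof and the paper's coincide step for step.

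Where you diverge is in taking the continuity clause of Definition~\ref{def:k-integrable} seriously: the paper concludes ``$\mathbf{p}'$ is $k$-integrable'' directly from $\p_V\log\mathbf{p}'(\xi)\in L^k(\Om',\mathbf{p}'(\xi))$ and the norm estimate, implicitly leaning on the equivalence asserted in Remark~\ref{rem:k-integ}, whereas you attempt to prove continuity of $d\mathbf{p}'^{1/k}$ by factoring it through a tangent map $\hat K$ of $K_*^{1/k}$. The plan is reasonable, but the convergence lemma you reduce it to is false as stated: $L^1$-convergence of $\psi_n'g_n^k$, $L^k$-convergence of $g_n$, and a uniform $L^k$-bound on $\psi_n'g_n$ do \emph{not} force $L^k$-convergence of $\psi_n'g_n$. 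Take $g_n\equiv 1$ and $\psi_n'=n^{1/k}\chi_{(0,1/n)}$ on $[0,1]$ with Lebesgue measure: the products tend to $0$ in $L^1$ and the $L^k$-norms are identically $1$, yet $\psi_n'\not\to 0$ in $L^k$. The reduction discards the upstream convergence $\rho_n\to\rho$ in $\Ss^{1/k}(\Om)$, retaining only the operator bound $\leq 1$, and that is precisely the information one must keep --- for instance by comparing $d(K_*^{1/k})_{\mu_n^{1/k}}(\rho_n)$ with the image of a $\mu_n$-adapted approximation of $\rho$ and using that for a \emph{fixed} base measure the map $K_*^\mu: L^k\to L^k$ is a linear contraction. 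So the portion of your write-up that overlaps with the paper is correct and identical in method; the additional continuity verification, which you yourself flag as unfinished, rests on a lemma that needs to be replaced before that step can be considered closed.
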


\begin{proof}
Since $K_*$ is the restriction of a bounded linear map, it is obvious
that $\mathbf{p}': M \to\Mm(\Om')$ is again differentiable, and in fact,
%
\begin{equation}
\label{eq:p-p'} d_\xi\mathbf{p}'(V) = K_*
\bigl(d_\xi\mathbf{p}(V)\bigr)
\end{equation}
for all $V \in T_\xi M$, $\xi\in M$.

Let $\mu:= \mathbf{p}(\xi)$ and $\mu' := \mathbf{p}'(\xi) = K_*\mu$, and
let $\phi:= \p_V \log\mathbf{p}(\xi)$ and $\phi' :=\p_V \log\mathbf{p}'(\xi)$, so that
$d_\xi\mathbf{p}(V) = \phi\mu$ and $d_\xi\mathbf{p}'(V) = \phi' \mu'$. By (\ref{eq:p-p'}) we thus have
\[
K_*(\phi\mu) = \phi' \mu',
\]
so that $\phi' = K_*^\mu(\phi)$ is the expectation value of $\phi$
given $K$. If $\mathbf{p}$ is $k$-integrable, then
$\phi= \p_V \log\mathbf{p}(\xi) \in L^k(\Om, \mu)$, whence $\phi' \in L^k(\Om', \mu')$,
and $\|\phi'\|_k \leq\|\phi\|_k$, by Theorem~\ref{thm:K-Lk}. That
is, $\mathbf{p}'$ is $k$-integrable as well and (\ref{eq:est-log}) holds.

If $K$ is congruent, then $\|\phi'\|_k = \|\phi\|_k$ by Proposition~\ref{prop:Kr-cong}.

If $k > 1$ and $K$ is given by a statistic $\kappa$, then equality in
(\ref{eq:est-log}) occurs iff $\phi= \kappa^*\phi'$ by Proposition~\ref{lem:kappa-r}.
\end{proof}

Since the Fisher metrics $\g^F$ of $(M, \Om, \mathbf{p})$ and ${\g'}^F$
of $(M, \Om', \mathbf{p}')$ are defined as
\[
\g(V,V) = \bigl\| \p_V \log\mathbf{p}(\xi)\bigr\|
_2^2 \quad \mbox{and}\quad \g'(V, V) =
\bigl\| \p_V \log\mathbf{p}'(\xi)\bigr\|
_2^2
\]
by (\ref{for:fisher}), Theorem~\ref{thm:induced-kintegrable}
immediately implies the following.

\begin{theorem}[Monotonicity theorem, cf.
\cite{AN2000,AJLS,AJLS2}]\label{thm:monotonicity}
Let $(M, \Om, \mathbf{p})$ be a $k$-integrable parame\-trized measure model
for $k \geq2$, let $K: \Om\to\Pp(\Om')$ be a Markov kernel, and
let $(M, \Om', \mathbf{p}')$ be given by $\mathbf{p}'(\xi) = K_*\mathbf{p}(\xi
)$. Then
%
\begin{equation}
\label{eq:monotonicity} \g(V, V) \geq\g'(V, V)\qquad \mbox{for all }V \in
T_\xi M\mbox{ and } \xi\in M.
\end{equation}
\end{theorem}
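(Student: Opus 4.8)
The plan is to read off this statement as an immediate consequence of Theorem~\ref{thm:induced-kintegrable} specialized to the exponent $2$. First I would observe that since $(M, \Om, \mathbf{p})$ is $k$-integrable for some $k \geq 2$, it is in particular $2$-integrable: as noted after Definition~\ref{def:k-integrable}, for the finite measure $\mathbf{p}(\xi)$ one has $L^k(\Om, \mathbf{p}(\xi)) \subset L^2(\Om, \mathbf{p}(\xi))$, so that $k$-integrability entails $l$-integrability for every $1 \leq l \leq k$. Hence $\p_V \log \mathbf{p}(\xi)$ lies in $L^2(\Om, \mathbf{p}(\xi))$ and the Fisher metric $\g$ is well defined by (\ref{for:fisher}).

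Next I would apply Theorem~\ref{thm:induced-kintegrable} with the value $2$ in place of $k$. That theorem guarantees that $(M, \Om', \mathbf{p}')$ is again $2$-integrable, so that $\g'$ is likewise well defined, and it furnishes the norm estimate
\[
\bigl\| \p_V \log \mathbf{p}'(\xi)\bigr\|_2 \leq \bigl\| \p_V \log \mathbf{p}(\xi)\bigr\|_2 \qquad \mbox{for all } V \in T_\xi M,
\]
with the norms taken in $L^2(\Om', \mathbf{p}'(\xi))$ and $L^2(\Om, \mathbf{p}(\xi))$, respectively. Both sides are nonnegative, so squaring preserves the inequality. Invoking the identities $\g(V, V) = \| \p_V \log \mathbf{p}(\xi)\|_2^2$ and $\g'(V, V) = \| \p_V \log \mathbf{p}'(\xi)\|_2^2$ recorded just above the statement then yields $\g'(V, V) \leq \g(V, V)$ for every $V \in T_\xi M$ and every $\xi \in M$, as claimed.

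In this reduction there is essentially no obstacle left: all of the analytic content has already been absorbed into Theorem~\ref{thm:induced-kintegrable}. The genuine work sits one level down, in the assertion that $L^k$-norms of conditional expectations do not increase under a Markov morphism (Theorem~\ref{thm:K-Lk}); this is where the decomposition of an arbitrary Markov kernel into a congruent kernel followed by a statistic (Theorem~\ref{thm:decomp-Markov}), combined with the H\"older-type estimate of Proposition~\ref{lem:kappa-r} and the isometry property of congruent kernels from Proposition~\ref{prop:Kr-cong}, does the heavy lifting. Granting those, the monotonicity formula is merely the case $n = 2$ of the canonical tensor, read through its Fisher-metric interpretation, and the proof collapses to the one-line argument above.
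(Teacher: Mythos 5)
Your proposal is correct and follows exactly the route the paper takes: the authors also record the identities $\g(V,V) = \|\p_V \log\mathbf{p}(\xi)\|_2^2$ and $\g'(V,V) = \|\p_V \log\mathbf{p}'(\xi)\|_2^2$ from (\ref{for:fisher}) and then declare the theorem an immediate consequence of Theorem~\ref{thm:induced-kintegrable} applied with exponent $2$. Your additional remark that the analytic content lives in Theorem~\ref{thm:K-Lk} via the decomposition of Theorem~\ref{thm:decomp-Markov} matches the paper's architecture precisely.
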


\begin{remark}\label{rem5.1}
Note that our approach allows to prove the Monotonicity Theorem~\ref
{thm:monotonicity} with no further assumption on the model $(M, \Om,
\mathbf{p})$. In order for (\ref{eq:monotonicity}) to hold we can work
with arbitrary Markov kernels, not just statistics $\kappa$. Even if
$K$ is given by a statistic $\kappa$, we do not need to assume that
$\Om$ is a topological space with its Borel $\sigma$-algebra as in
\cite{Le2013}, Theorem~1.2, nor do we need to assume the existence of
transversal measures of the map $\kappa$ (e.g. \cite{AN2000}, Theorem~2.1), nor do we need to assume that all measures $\mathbf{p}(\xi
)$ have the same null sets (\cite{AJLS}, Theorem~3.11). In this sense,
our statement generalizes these versions of the monotonicity theorem,
as it even covers a rather peculiar statistic as in Example~\ref{ex:notrans}.
\end{remark}

In \cite{AN2000}, p.~98, the difference
%
\begin{equation}
\label{eq:InfoLoss-classical} \g(V, V) - \g'(V, V) \geq0
\end{equation}
is called the \emph{information loss} of the model under the statistic
$\kappa$, a notion which is highly relevant for statistical inference.
This motivates the following definition.

\begin{definition}\label{def:infoloss}
Let $(M, \Om, \mathbf{p})$ be $k$-integrable for some $k > 1$, let $K:
\Om\to\Pp(\Om')$ and $(M, \Om', \mathbf{p}')$ be as above, so that
$(M, \Om', \mathbf{p}')$ is $k$-integrable as well. Then for each $V \in
T_\xi M$ we define the $k$\emph{th order information loss under} $K$ \emph{in
direction} $V$ as
\[
\bigl\| \p_V \log\mathbf{p}(\xi)\bigr\|
_k^k - \bigl\| \p_V \log
\mathbf{p}'(\xi)\bigr\| _k^k \geq0,
\]
where the norms are taken in $L^k(\Om, \mathbf{p}(\xi))$ and $L^k(\Om',
\mathbf{p}'(\xi))$, respectively.
\end{definition}

That is, the information loss in (\ref{eq:InfoLoss-classical}) is
simply the special case $k = 2$ in Definition~\ref{def:infoloss}.
Observe that due to Theorem~\ref{thm:induced-kintegrable} the
vanishing of the information loss for \emph{some} $k>1$ implies the
vanishing for \emph{all} $k > 1$ for which this norm is defined. That
is, the $k$th order information loss measures the same quantity by
different means.

For instance, if $(M, \Om, \mathbf{p})$ is $k$-integrable for $1 < k <
2$, but not $2$-integrable, then the Fisher metric and hence the
classical information loss in (\ref{eq:InfoLoss-classical}) is not
defined. Nevertheless, we still can quantify the $k$th order
information loss of a statistic of this model.

Observe that for $k=2n$ an even integer $\tau^{2n}_{(M, \Om, \mathbf{p})}(V, \ldots, V) = \|\p_V \log\mathbf{p}(\xi)\|_{2n}^{2n}$, whence
the difference
\[
\tau^{2n}_{(M, \Om, \mathbf{p})}(V, \ldots, V)- \tau^{2n}_{(M, \Om',
\mathbf{p}')}(V,
\ldots, V) \geq0
\]
represents the $2n$th order information loss of $\kappa$ in direction
$V$. This gives an interpretation of the canonical $2n$-tensors $\tau
^{2n}_{(M, \Om, \mathbf{p})}$.

It is a natural problem to characterize statistics of a model which do
not produce any information loss. Fisher \cite{Fisher1922} called such
a statistic \emph{sufficient} writing that ``\ldots the criterion of
sufficiency, which latter requires that the whole of the relevant
information supplied by a sample shall be contained in the statistics
calculated'' \cite{Fisher1922}, p.~367. This motivates the following
definition.

\begin{definition}[Sufficient statistic]\label{def:suffstat2}
Let $(M, \Om, \mathbf{p})$ be a parametrized measure model which is
$k$-integrable for some $k > 1$. Then a statistic $\kappa: \Om\to\Om
'$ or, more general, a Markov kernel $K: \Om\to{\cal P}(\Om')$ is
called a \emph{sufficient for the model} if the $k$th order
information loss vanishes for all tangent vectors $V$, that is, if
\[
\bigl\| \p_V \log\mathbf{p}'(\xi)\bigr\|
_k = \bigl\| \p_V \log\mathbf{p}(\xi)\bigr\|
_k\qquad \mbox{for all }V \in T_\xi M,
\]
where $\mathbf{p}'(\xi) = \kappa_*\mathbf{p}(\xi)$ or $\mathbf{p}'(\xi) =
K_*\mathbf{p}(\xi)$, respectively.
\end{definition}

Again, in this definition it is irrelevant which $k > 1$ is used, as
long as $k$-integrability of the model is satisfied.

\begin{example}[Fisher--Neyman \cite{Neyman1935}] \label{ex:sufficient}
Let $(M, \Om', \mu_0', \mathbf{p}')$ be a parametrized measure model
dominated by $\mu_0'$, given by
\[
\mathbf{p}'(\xi) = \phi'(\cdot;\xi)
\mu_0', \qquad \phi': \Om\times M
\longrightarrow[0,\infty].
\]
Moreover, let $\kappa: \Om\to\Om'$ be a statistic and $\mu_0 \in
\Mm(\Om)$ such that $\kappa_*(\mu_0) = \mu_0'$. Define the
parametrized measure model $(M, \Om, \mu_0, \mathbf{p})$ as
%
\begin{equation}
\label{Fisher-Neyman} \mathbf{p}(\xi) := \phi'\bigl(\kappa(\cdot), \xi
\bigr) \mu_0.
\end{equation}
Then $\kappa$ is a sufficient statistic for $(M, \Om, \mu_0, \mathbf{p})$.
Indeed, $\kappa_*(\mathbf{p})(\xi) = \mathbf{p}'(\xi)$ by (\ref
{eq:pushforward-mult}), and $d_\xi\mathbf{p}(V) = \kappa^*(d\mathbf{p}'_\xi
(V))$ for all $V \in T_\xi M$, so that $\p_V \log\mathbf{p}(\xi) =
\kappa^*(\p_V \log\mathbf{p}'(V))$. By Theorem~\ref
{thm:induced-kintegrable} it follows that equality holds in (\ref
{eq:est-log}), so that $\kappa$ is a sufficient statistic for $(M, \Om
, \mu_0, \mathbf{p})$.
\end{example}

Under some further assumptions, the statistics given in Example~\ref
{ex:sufficient} exhaust \emph{all} sufficient statistics. More
precisely, the following is known as the \emph{Fisher--Neyman factorization}.

\begin{proposition}[\cite{Neyman1935}]\label{prop:infoloss}
Let $(M, \Om, \mu, \mathbf{p})$ be a parametrized measure model with a
positive regular density function $p: \Om\times M \to(0, \infty)$,
and let $\kappa: \Om\to\Om'$ be a sufficient statistic of the model.

Then $(M, \Om, \mu, \mathbf{p})$ admits a Fisher--Neyman factorization,
that is, it is of the form (\ref{Fisher-Neyman}) in Example~\ref
{ex:sufficient} for some measure $\mu_0$ on $\Om$, $\mu_0' := \kappa
_*(\mu_0)$ and some function $\phi': \Om' \times M \to(0, \infty)$.
\end{proposition}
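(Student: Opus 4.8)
The plan is to exploit the equality case of the monotonicity estimate to show that, up to an $\xi$-independent factor, the density $p$ factors through $\kappa$, and then to read off the Fisher--Neyman form. Since $\kappa$ is sufficient and $k>1$, Theorem~\ref{thm:induced-kintegrable} (via the equality criterion of Proposition~\ref{lem:kappa-r}(2)) gives, for every $\xi\in M$ and every $V\in T_\xi M$,
\[
\p_V\log\mathbf{p}(\xi)=\kappa^\ast\bigl(\p_V\log\mathbf{p}'(\xi)\bigr),
\]
that is, the logarithmic derivative of $p$ in any parameter direction is, modulo a $\mu$-null set, the pullback of a function on $\Om'$. Writing $\mathbf{p}'(\xi)=p'(\cdot;\xi)\mu'$ with $\mu'=\kappa_\ast\mu$, this reads $\p_V\log p(\cdot;\xi)=\kappa^\ast\bigl(\p_V\log p'(\cdot;\xi)\bigr)$.

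Next I fix a base point $\xi_0\in M$, assuming $M$ connected (otherwise one argues on each component). Set $\mu_0:=\mathbf{p}(\xi_0)=p(\cdot;\xi_0)\mu\in\Mm(\Om)$, which is equivalent to $\mu$ because $p$ is positive, so that $\mathbf{p}(\xi)=\bigl(p(\cdot;\xi)/p(\cdot;\xi_0)\bigr)\mu_0$. The heart of the argument is the claim that the function $F_\xi:=\log p(\cdot;\xi)-\log p(\cdot;\xi_0)$ is, for every $\xi$, the pullback $\kappa^\ast\psi(\cdot;\xi)$ of some measurable $\psi(\cdot;\xi)$ on $\Om'$. Granting this, I put $\phi'(\cdot;\xi):=\exp\psi(\cdot;\xi)\colon\Om'\times M\to(0,\infty)$ and obtain
\[
\mathbf{p}(\xi)=\exp\bigl(\kappa^\ast\psi(\cdot;\xi)\bigr)\mu_0=\kappa^\ast\bigl(\phi'(\cdot;\xi)\bigr)\mu_0=\phi'\bigl(\kappa(\cdot);\xi\bigr)\mu_0,
\]
which is exactly the form \eqref{Fisher-Neyman}. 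Moreover, setting $\mu_0':=\kappa_\ast\mu_0$, the pullback-multiplication identity \eqref{eq:pushforward-mult} gives $\mathbf{p}'(\xi)=\kappa_\ast\mathbf{p}(\xi)=\phi'(\cdot;\xi)\mu_0'$, so $\phi'$ is indeed the density of $\mathbf{p}'(\xi)$ with respect to $\mu_0'$, and its positivity follows from that of $p$. (In fact one finds $\psi(\cdot;\xi)=\log p'(\cdot;\xi)-\log p'(\cdot;\xi_0)$.)

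To prove the claim I connect $\xi_0$ to $\xi$ by a $C^1$-path $\xi(t)$, $t\in[0,1]$. For $\mu$-a.e.\ fixed $\omega$, positivity and regularity of $p$ make $t\mapsto\log p(\omega;\xi(t))$ differentiable, so the fundamental theorem of calculus yields $F_\xi(\omega)=\int_0^1\p_{\dot\xi(t)}\log p(\omega;\xi(t))\,dt$. By the first step each integrand lies in the subspace $\kappa^\ast\bigl(L^k(\Om',\mu')\bigr)\subset L^k(\Om,\mu)$, which is \emph{closed} because $\kappa^\ast$ is an isometry by \eqref{eq:kappa*}. The main obstacle is precisely the passage from this pointwise-in-$t$ pullback property of the integrand to the pullback property of the integral $F_\xi$: the exceptional $\mu$-null set on which $\p_V\log p=\kappa^\ast(\cdots)$ fails depends a priori on $t$, so one must organize the integration so that these null sets do not accumulate. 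I plan to handle this by realizing $t\mapsto\p_{\dot\xi(t)}\log p(\cdot;\xi(t))$ as a continuous, hence Bochner-integrable, curve in $L^k(\Om,\mu)$ (here $k$-integrability of the model and positivity of $p$ are used to control $\log p$ and to identify $F_\xi$ with this integral), whence $F_\xi$ lands in the closed subspace of pullbacks. Verifying that $\xi\mapsto\log p(\cdot;\xi)$ is a genuinely differentiable, or at least absolutely continuous, curve into this $L^k$-space, rather than merely a pointwise-differentiable family, is the technical crux, and it is exactly where the positivity hypothesis on $p$ is indispensable.
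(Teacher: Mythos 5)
Your first step is exactly the paper's: sufficiency plus the equality criterion of Proposition~\ref{lem:kappa-r}(2) (via Theorem~\ref{thm:induced-kintegrable}) gives $\p_V\log p(\cdot;\xi)=\kappa^*\bigl(\p_V\log p'(\cdot;\xi)\bigr)$ for all $V$. Where you diverge is in how you integrate this infinitesimal statement over $M$, and that is where your argument has a genuine gap --- one you candidly flag yourself. You propose to write $F_\xi=\log p(\cdot;\xi)-\log p(\cdot;\xi_0)$ as a Bochner integral of the curve $t\mapsto\p_{\dot\xi(t)}\log p(\cdot;\xi(t))$ in $L^k(\Om,\mu)$ and invoke closedness of the subspace of pullbacks. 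But this is only a plan: (i) $k$-integrability gives continuity of $V\mapsto\p_V\log\mathbf{p}(\xi)\cdot\mathbf{p}(\xi)^{1/k}$ into $\Ss^{1/k}(\Om)$, i.e.\ of $\p_V\log p\cdot p^{1/k}$ in $L^k(\Om,\mu)$, not of $\p_V\log p(\cdot;\xi(t))$ as a curve in the \emph{fixed} space $L^k(\Om,\mu)$ (the natural home of the derivative is the $t$-dependent space $L^k(\Om,\mathbf{p}(\xi(t)))$, and positivity of $p$ does not make these uniformly comparable); and (ii) identifying the pointwise fundamental-theorem-of-calculus integral with the Bochner integral requires exactly the absolute continuity in $L^k$ that you admit is unverified. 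As written, the ``technical crux'' is not resolved.

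The paper's own completion sidesteps all of this by staying pointwise. Since the density is \emph{regular} and \emph{positive}, $\log p(\omega;\cdot)$ and $\log p'(\kappa(\omega);\cdot)$ are honest differentiable functions of $\xi$ for each fixed $\omega$, the pullback identity becomes $\p_V\log\frac{p(\cdot;\xi)}{p'(\kappa(\cdot);\xi)}=0$ for all $V\in TM$, and on connected $M$ an elementary calculus argument (no functional analysis) shows the positive function $h:=p(\cdot;\xi)/p'(\kappa(\cdot);\xi)$ is independent of $\xi$. Setting $\mu_0:=h\mu$ and $\phi':=p'$ gives (\ref{Fisher-Neyman}) immediately. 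Your construction ultimately produces the same objects (you note $\psi=\log p'(\cdot;\xi)-\log p'(\cdot;\xi_0)$), so I would recommend replacing the Bochner-integral scaffolding by the pointwise ratio argument: it uses the positivity and regularity hypotheses exactly where they are needed and avoids the unproved $L^k$-continuity claim entirely. (One further small point common to both write-ups: the exceptional null set in $\p_V\log p=\kappa^*(\p_V\log p')$ depends a priori on $\xi$ and $V$; the pointwise argument handles this by choosing a countable dense set of directions and parameters, which is routine, whereas in your formulation it is entangled with the Bochner integration.)
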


\begin{proof}
If $\mathbf{p}(\xi) = p(\cdot;\xi) \mu$ where $p$ is positive and
differentiable in the $\xi$-variable, then $\log p(\cdot; \xi)$ and
$\log p'(\cdot; \xi)$ are well defined functions on $\Om\times M$
and $\Om' \times M$, respectively and differentiable in $\xi$. In
particular, $\kappa^*(\p_V \log p'(\cdot; \xi)) = \p_V(\log
p'(\kappa(\cdot);\xi))$, so that by Theorem~\ref
{thm:induced-kintegrable} equality in (\ref{eq:est-log}) holds for $k
> 1$ iff
\[
\p_V \log\frac{p(\cdot; \xi)}{p'(\kappa(\cdot);\xi)} = \p_V \bigl(\log p(\cdot;
\xi) - \bigl(\log p'\bigl(\kappa(\cdot);\xi\bigr)\bigr)\bigr) = 0.
\]
If $M$ is connected, then this is the case for \emph{all} $V \in TM$ iff
the positive function\vspace*{2pt} $h(\cdot) := \frac{p(\cdot; \xi)}{p'(\kappa
(\cdot);\xi)}$ does not depend on $\xi\in M$. Thus, setting $\mu_0
:= h \mu$ implies (\ref{Fisher-Neyman}), showing the assertion.
\end{proof}

Observe that the proof uses the positivity of the density function $p$
in a crucial way. In fact, without this assumption the conclusion is
false, as the following example shows.

\begin{example}\label{ex:suff}
Let $\Om:= (-1,1) \times(0,1)$, $\Om' := (-1,1)$ and $\kappa: \Om
\to\Om'$ be the projection onto the first component. For $\xi\in\R
$ we define the statistical model $\mathbf{p}$ on $\Om$ as $\mathbf{p}(\xi)
:= p(s,t; \xi) \,  ds\,  dt$, where
\[
p(s,t; \xi) := %
\begin{cases} h(\xi) & \mbox{for }\xi\geq0\mbox{ and
}s \geq0,
\\
2 h(\xi) t & \mbox{for }\xi< 0\mbox{ and }s\geq0,
\\
1 - h(\xi) & \mbox{for }s < 0, \end{cases} %
\]
with $h(\xi) := \exp(-|\xi|^{-1})$ for $\xi\neq0$ and $h(0) := 0$.
Then $\mathbf{p}(\xi)$ is a probability measure, and
\[
\mathbf{p}'(\xi) := \kappa_*\mathbf{p}(\xi) = p'(s;
\xi)\, ds\qquad \mbox{with } p'(s; \xi) := \bigl(1-h(\xi)\bigr)
\chi_{(-1,0)}(s) + h(\xi) \chi_{[0,1)}(s),
\]
and thus,
\[
\bigl\|\p_\xi\log p(s,t; \xi)\bigr\|_k =
\bigl\|\p_\xi\log p'(s; \xi)\bigr\|
_k = k \biggl(\biggl\|\frac{d}{d\xi} h(\xi)^{1/k}
\biggr\|^k + \biggl\|\frac{d}{d\xi} \bigl(1-h(\xi)
\bigr)^{1/k}\biggr\|^k \biggr)^{1/k},
\]
where the norm is taken in $L^k(\Om, \mathbf{p}(\xi))$ and $L^k(\Om',
\mathbf{p}'(\xi))$, respectively. Since this expression is continuous in
$\xi$ for all $k$, the models $(\R, \Om, \mathbf{p})$ and $(\R, \Om',
\mathbf{p}')$ are $\infty$-integrable, and there is no information loss
of $k$th order for any $k \geq1$, so that $\kappa$ is a sufficient
statistic of the model in the sense of Definition~\ref{def:suffstat2}. Thus, $\kappa$ is a sufficient statistic for the model.

Indeed, this model admits a Fisher--Neyman factorization when restricted
to $\xi\geq0$ and to $\xi\leq0$, respectively; in these cases, we have
\[
\mathbf{p}(\xi) = p'(s; \xi) \mu_\pm,
\]
with the measures $\mu_+ := ds \, dt$ for $\xi\geq0$ and $\mu_- :=
(\chi_{(-1,0)}(s) + 2t \chi_{[0,1)}(s)) \,  ds\,  dt$ for $\xi\leq0$,
respectively.

However, since $\mu_+ \neq\mu_-$, $\kappa$ is \emph{not} of the form
(\ref{Fisher-Neyman}) and hence \emph{not among the sufficient
statistics given in Example}~\ref{ex:sufficient} when defining it for
all $\xi\in\R$. This does not contradict Proposition~\ref
{prop:infoloss} since $p(s,t; \xi)$ is not positive a.e. for $\xi= 0$.
\end{example}

The reader might be aware that most texts use the description (\ref
{Fisher-Neyman}) in Example~\ref{ex:sufficient} as a definition for a
sufficient statistic, for example, \cite{Fisher1922}, \cite{AJLS}, Definition~3.1, \cite{AN2000}, (2.17), \cite{Borovkov1998}, Theorem~1, p.~117. In the light of the Fisher--Neyman factorization in
Proposition~\ref{prop:infoloss}, this is equivalent to our Definition~\ref{def:suffstat2} under the assumption that the model is given by a
regular positive density function, an assumption that has been made in
all these references.

However, the significance of Example~\ref{ex:suff} is that the two
notions of sufficiency are no longer equivalent if the assumption of
positivity of the density function is dropped. But since in this
example, all statistical information of $(M, \Om, \mathbf{p})$ can be
recovered from $(M, \Om', \mathbf{p}')$, it seems natural to define
sufficiency of a statistic in such a way that this example is subsumed,
that is, as in Definition~\ref{def:suffstat2}.


\section*{Acknowledgements}
This work was mainly carried out at
the Max Planck Institute for Mathematics in the Sciences in Leipzig,
and we are grateful for the excellent working conditions provided at
that institution. H.V. L\^e is partially supported by Grant
RVO: 67985840. J. Jost acknowledges support from the ERC Advanced Grant
FP7-267087. We also thank the referees for numerous helpful comments
and suggestions which helped to significantly improve the manuscript.

\small{

}

\begin{thebibliography}{999}

\bibitem{Amari1980}{\sc  S. Amari}, Theory of information spaces. A geometrical foundation of statistics. POST RAAG Report 106, 1980. 
\bibitem{Amari1982}{\sc S. Amari}, Differential geometry of curved exponential families curvature and information loss. The Annals of Statistics, 10(1982),357-385.
\bibitem{Amari1987}{\sc S. Amari}, Differential Geometrical Theory of Statistics, in: Differential geometry in statistical inference, Institute of Mathematical Statistics, Lecture Note-Monograph Series, Volume 10,  California (1987).
\bibitem{AN2000}{\sc S. Amari, H. Nagaoka}, Methods of information geometry, Translations of mathematical monographs; v. 191, American Mathematical Society, Providence, RI; Oxford University Press, Oxford, 2000.
\bibitem{AJLS}{\sc N.Ay, J.Jost, H.V.L\^e, L.Schwachh\"ofer}, Information geometry and sufficient statistics, Probability Theory and Related Fields 162 no. 1-2, (2015), 327-364.
\bibitem{AJLS2}{\sc N.Ay, J.Jost, H.V.L\^e, L.Schwachh\"ofer}, Invariant geometric structures on statistical models, in: Geometric science of information, F. Nielsen, F. Barbaresco (eds.), LCNS 9398, Springer (2015)
\bibitem{Bauer}{\sc H. Bauer}, Measure and integration theory, translated from the German by Robert B. Burckel, de Gruyter, 2001
\bibitem{BBM}{\sc M. Bauer, M. Bruveris, P. Michor}, Uniqueness of the Fisher-Rao metric on the space of smooth densities, arXiv:1411.5577 (2014).
\bibitem{Borovkov1998} {\sc  A. A. Borovkov}, Mathematical statistics, Gordon and Breach Science Publishers, 1998.
\bibitem{CP2007}{\sc A. Cena and G. Pistone}, Exponential statistical model, AISM 59 (2007), 27-56.
\bibitem{Chentsov1965} {\sc N. Chentsov}, Category of  mathematical statistics, Dokl. Acad. Nauk USSR 164 (1965), 511-514. 
\bibitem{Chentsov1978}{\sc N. Chentsov}, Algebraic foundation of mathematical statistics,  Math. Operationsforsch. statist. Serie Statistics. v.9 (1978), 267-276.
\bibitem{Chentsov1982}{\sc N. Chentsov}, Statistical decision rules and optimal inference,  Moscow,  Nauka,  1972 (in Russian),   English translation  in:  Translation of Math. Monograph 53, AMS, Providence, 1982.
\bibitem{Efron1975}{\sc B. Efron}, Defining the curvature of a statistical problem (with applications to second order efficiency), with a discussion by C. R. Rao, Don A. Pierce, D. R. Cox, D. V. Lindley, Lucien LeCam, J. K. Ghosh, J. Pfanzagl, Niels Keiding, A. P. Dawid, Jim Reeds and with a reply by the author, Ann. Statist. 3 (1975),  1189-1242.
\bibitem{Fisher1922}{\sc R. A. Fisher}, On the mathematical foundations of theoretical statistics, Philosophical Transactions of the Royal Society of London. Series A 222(1922), 309-368.
\bibitem{GS1977} {\sc V. Guillemin, S. Sternberg}, Geometric asymptotics, Math. Surveys, 14, Amer. Math. Soc., Providence, R.I., 1977
\bibitem{Jeffreys1946} {\sc H. Jeffreys}, An invariant form for the prior probability in estimation problems, Proc. Roy. Soc. London. Ser. A. 186,  453-461, 1946.
\bibitem{Lang2002} {\sc S. Lang},  Introduction to Differentiable Manifolds, 2nd ed., Universitext, Springer (2002)
\bibitem{Lauritzen1987} {\sc S. Lauritzen}, Statistical manifolds, in: Differential geometry in statistical inference, Institute of Mathematical Statistics, Lecture Note-Monograph Series, Volume 10,  California (1987).
\bibitem{Le2013}{\sc H.V. L\^e}, The uniqueness of the Fisher metric as information metric, Ann. Inst. Statist. Math.  69, 879-896 (2017)
\bibitem{MS1966}{\sc N. Morse and  R. Sacksteder}, Statistical isomorphism,  Annals of Math. Statistics, 37 (1966), 203-214.
\bibitem{MC1991}{\sc  E. Morozova and N. Chentsov},  Natural geometry on families of  probability laws, Itogi Nauki i Techniki, Current problems of mathematics, Fundamental directions  83 (1991), Moscow, 133-265.
\bibitem{MR}{\sc M. Murray, J. Rice}, Differential geometry and statistics, Chapman \& Hall, London etc., 1993
\bibitem{Neveu1965}{\sc J. Neveu}, Mathematical Foundations of the Calculus of Probability, Holden-Day series in probability and statistics (1965)
\bibitem{Neyman1935} {\sc J. Neyman},Sur un teorema concernente le cosidette statistiche sufficienti, Giorn. Ist. Ital. Att., 6:320-334 (1935)
\bibitem{P2013} {\sc G. Pistone}, Nonparametric information geometry. Geometric science of information, Lecture Notes in Comput. Sci., 8085, Springer, Heidelberg (2013)
\bibitem{PS1995} {\sc G. Pistone  and C. Sempi}, An infinite-dimensional structure on the space of all the probability measures equivalent to a given one, The  Annals of Statistics 23 (1995), N. 5,  1543-1561. 
\bibitem{Rao1945}{\sc C. R. Rao}, Information and the accuracy attainable in the estimation of statistical parameters, Bulletin of the Calcutta Mathematical Society 37, 81-89, 1945.
\end{thebibliography}
\end{document}